\numberwithin{equation}{section}
\newtheorem{theorem}{Theorem}[section]
\newtheorem{proposition}[theorem]{Proposition}
\newtheorem{lemma}[theorem]{Lemma}
\newtheorem{remark}[theorem]{Remark}
\theoremstyle{definition}
\newcommand{\set}[1]{\left\{#1\right\}}
\newcommand{\R}{\mathbb R}
\newcommand{\C}{\mathbb C}
\newcommand{\EE}{{\mathscr E}}
\newcommand{\eps}{\varepsilon}
\newcommand{\im}{\imath}
\newcommand{\CC}{{\mathbb C}}
\newcommand{\const}{{\rm const}}
\renewcommand{\H}{{\mathcal H}}
\title[Soliton dynamics of NLS with singular potentials]{Soliton dynamics of NLS with singular potentials}
\author[C.\ Bonanno]{Claudio Bonanno}
\author[M.\ Ghimenti]{Marco Ghimenti}
\author[M.\ Squassina]{Marco Squassina}
\address{Dipartimento di Matematica Applicata
\newline\indent
Universit\`a degli Studi di Pisa
\newline\indent
Via F. Buonarroti, 1/c, I-56127 Pisa, Italy}
\email{bonanno@mail.dm.unipi.it}
\email{ghimenti@mail.dm.unipi.it}
\address{Dipartimento di Informatica
\newline\indent
Universit\`a degli Studi di Verona
\newline\indent
C\'a Vignal 2, Strada Le Grazie 15, I-37134 Verona, Italy}
\email{marco.squassina@univr.it}
\thanks{The authors were partially supported by the 
MIUR projects PRIN2009: {\em ``Variational and Topological Methods in the Study of Nonlinear Phenomena''}
and {\em ``Critical Point Theory and Perturbative Methods for Nonlinear Differential Equations''}}
\subjclass[2000]{35D99, 35J62, 58E05, 35J70}
\keywords{Nonlinear Schr\"odinger equation, soliton dynamics, singular potentials}
\begin{document}

\begin{abstract}
We investigate the validity of a soliton dynamics behavior in the semi-relativistic limit for the nonlinear Schr\"odinger equation in $\R^{N}, N\ge 3$, in presence of a singular external potential.
\end{abstract}

\maketitle

%%%%%%%%%%%%%%%%%%%%%%%%%%%%%%%%%%%%%%

%\bigskip
%\setcounter{tocdepth}{1}
%\begin{center}
%\begin{minipage}{8cm}
%\footnotesize
%\tableofcontents
%\end{minipage}
%\end{center}
%\medskip
%%%%%%%%%%%%%%%%%%%%%%%%%%%%%%%%%%%%%%

\section{Introduction and main result}
\noindent
For $\eps \in (0,1]$, $N\ge 3$ and $0<p<2/N$, we consider the nonlinear Schr\"odinger equation
\begin{equation}
	\label{problem}
\imath \eps \partial_t u_\eps+\frac{\eps^2}{2}\Delta u_\eps-V(x)u_\eps+|u_\eps|^{2p}u_\eps=0,\qquad t>0,\,\, x\in\R^N
\end{equation}
in presence of a real external potential $V$. This equation typically appears for the 
propagation of light in nonlinear optical materials which exhibit some kind of inhomogeneities, 
see \cite{sulem} and the references therein for more details.  
For a smooth potential $V$, the problem of orbital stability of standing wave solutions to~\eqref{problem}
has been extensively studied, see e.g.~\cite{abc,caze,cazlio}.
Beside some studies of 
\eqref{problem} in the framework of geometric optics and via suitable perturbation methods \cite{abc}, 
several contributions appeared on the rigorous 
derivation of the soliton dynamics behavior in the semi-relativistic limit $\eps\to 0$ for \eqref{problem} 
with bump-like initial data. Essentially, two rather different approaches are currently available in the literature.
On one hand, the seminal paper by Bronski and Jerrard \cite{bronski}, refined by \cite{keraa}, adopted a 
technique which includes a combination of quantum and classical conservation laws 
with the modulational stability property of ground states
due to Weinstein \cite{wa1,wa2}, see \cite{benghimich,benghimich2,keraa} and the references therein.
On the other hand a different and more geometrical approach was developed in a series of papers
\cite{abou1,frolich1,frolich2,frolich3,holzwo}. Subsequently, based on the first approach,
further developments were achieved for a class of weakly coupled Schr\"odinger systems \cite{sys1,sys2} as well as
for equations with an external electromagnetic field \cite{selvit,magn}.
In all of these manuscripts, the external
potential $V$ is always assumed to be a smooth function on $\R^N$ with bounded derivatives up to order three. For rough and time-dependent potentials see \cite{abou2,abou0}. 

In this paper, we shall derive a soliton dynamics behavior 
for \eqref{problem} in presence of a smooth but singular potential. To our knowledge previous results on this case consider only the one dimensional case, see e.g. \cite{holzwo} and \cite{adno}. 
We shall assume that $V$ satisfies the following conditions:
\noindent
\vskip4pt
\begin{itemize}
\item[(V1)] $V\in C^\infty(\R^N \setminus \set{0}, \R)$ is such that
$$
V(x) \sim |x|^{-\beta},\quad\,\,\,  |\nabla V(x)| \lesssim |x|^{-(\beta+1)},\quad\,\,\, 
\big| \nabla |\nabla V|(x)\big| \lesssim |x|^{-(\beta+2)},\qquad\text{as $|x|\to 0$,}
$$ 
where $0< \beta < 1$;
\vskip3pt
\item[(V2)] $V(x) \ge V_0 =\inf_{\R^N} V>0$ for all $x\in \R^N \setminus \set{0}$ 
and $\frac{|\nabla V(x)|^2}{\sqrt{V(x)-V_0}} \in L^N(\R^N\setminus B(0,1));$ 
\vskip4pt
%(il limite dal basso con $V_0$ serve per avere le soluzioni 
%e le stime in $L^2$ e il decadimento all'infinito serve per 
%avere le stime sull'energia iniziale e nella dimostrazione della Proposizione \ref{viene-2});
\item[(V3)] for each $\delta >0$ it holds $\phi(\delta) <+\infty$, where $\phi:(0,\infty)\to (0,\infty)$ is defined by
\begin{equation} 
	\label{il-phi}
\phi(\delta):= \sum_{|\alpha|=0}^3\, \| D^\alpha V 
\|_{L^\infty(B_\delta)}, \qquad  B_\delta := \R^N \setminus B(0,\delta).
\end{equation}
\end{itemize}
Hence $V$ is bounded away from zero and has only one singularity located,
with no loss of generality, at the origin and is elsewhere
smooth and uniformly bounded together with 
the higher order derivatives up to the order three.
Next, we introduce the initial conditions to be assigned to equation \eqref{problem}. Let $H$ denote 
the \emph{energy space}, that is $H^1(\R^N)$ endowed with the standard norm
\begin{equation*}
	% \label{norma}
\| u \|_H^2 := \int_{\R^{N}} \big( |\nabla u|^2 + |u|^2 \big).
\end{equation*}
We also introduce the $H^{1}_{\eps}$-norm defined on $H$ as
\begin{equation}\label{h1-eps}
\| u \|_{H^{1}_{\eps}}^{2} := \frac{1}{\eps^{N-2}}\, \int_{\R^{N}}\, |\nabla u|^2 + \frac{1}{\eps^{N}}\, \int_{\R^{N}} |u|^2,
\quad\,\,\, u\in H.
\end{equation}
Let $R$ be the positive radial solution to
\begin{equation}\label{eq-R}
-\frac 12 \Delta R(x) + R(x) = R(x)^{2p+1}, \qquad x\in \R^{N}.
\end{equation}
It is well known that $R$ is unique (up to translations) \cite{kwong} and 
exponentially decaying, satisfying
\begin{equation}\label{prop-R}
\lim_{|x|\to +\infty} R(x)\, |x|^{\frac{N-1}{2}} e^{|x|}=\const.
\end{equation}

\noindent
Moreover let $(x_0, \xi_0) \in \R^N \times \R^N$ with $x_0\neq 0$. It is readily seen that there exists 
$\delta = \delta(x_0,\xi_0)>0$ such that the solution $(x(t),\xi(t))$ to the Newtonian system
\begin{equation}
\label{newton}
\begin{cases}
\dot x=\xi, & \\ 
\dot\xi=-\nabla V(x),  & \\
x(0)=x_0,  & \\ 
\xi(0)=\xi_0.
\end{cases}
\end{equation}
is global in time and satisfies 
\begin{equation}
	\label{deltabblow}
\inf\limits_{t} |x(t)|> \delta\, , \quad \sup_{t} |\xi(t)| < \sqrt{|\xi_{0}|^{2} + 2V(x_{0})}.
\end{equation}
This easily follows by the Hamiltonian function for \eqref{newton}, given by
\begin{equation}\label{hamil-newton}
\H(x,\xi) = \frac 12 |\xi|^2 + V(x),\qquad x,\xi\in \R^N.
\end{equation}
\noindent
Let $v_{\eps}(x)$ be a function satisfying:
\vskip5pt
\begin{itemize}
\item[(C1)] $v_\eps(x) \in H$ and is radially symmetric with respect to $x_0$;
\vskip5pt
\item[(C2)] there exist $\gamma>0$ and $(x_0, \xi_0) \in \R^N \times \R^N$ with $x_0\not= 0$ such that
$$
\Big\| v_\eps(x) - R\Big( \frac{x-x_0}{\eps} \Big)  e^{\imath \, \frac{\xi_0 \cdot x}{\eps}} \Big\|^2_{H^1_\eps} < \gamma;
$$

\item[(C3)] for $\delta = \delta(x_0,\xi_0)>0$ as defined in \eqref{deltabblow}, 
there exists $\rho \in (0, |x_{0}|-\delta)$ such that 
$$
\text{supp}\, v_\eps(x) \subset B(x_0,\rho);
$$

\item[(C4)] $\frac{1}{\eps^N} \| v_\eps(x) \|^2_{L^2} = \| R \|^2_{L^2} =: m$.
\end{itemize}
\vskip5pt
\noindent
We are then reduced to study the initial value problem
\begin{equation}\label{ivp}
\left\{ \begin{array}{l}
\imath \eps \partial_t u_\eps+\frac{\eps^2}{2}\Delta u_\eps-V(x)u_\eps+|u_\eps|^{2p}u_\eps=0, \\[0.2cm]
u_{\eps}\in H, \\[0.2cm]
u_{\eps}(0,x) = v_\eps(x),
\end{array} \right.
\end{equation}
where $V$ satisfies (V1)-(V3) and the initial datum $v_{\eps}$ satisfies (C1)-(C4).
Under the above assumptions, \eqref{ivp} admits a global strong solution, that is a function
$$ 
u_{\eps} \in C^{0}(\R, H) \cap C^{1}(\R,H^{-1}),
$$
such that $u_{\eps}(0,x) = v_{\eps}(x)$ and, for all $C^\infty_0(\R^N,\C)$ and $t>0$
$$
\Re\int_{\R^{N}}  \imath \eps \partial_t u_\eps(t,x)\bar\varphi(t,x)-\frac{\eps^2}{2}\nabla u_\eps(t,x)\cdot\nabla\bar\varphi(t,x)-
V(x)u_\eps(t,x)\bar\varphi(t,x)+|u_\eps(t,x)|^{2p}u_\eps(t,x)\bar\varphi(t,x)   = 0.
$$
Furthermore, there holds $u_\eps(t) \in H^2(\R^N)$ and $\partial_t u_\eps(t)\in L^2(\R^N)$, for all $t>0$.
Since under our assumptions $V\in L^m(\R^N)+L^\infty(\R^N)$ for $m\geq 2$ with $m>N/2$, 
this holds true in light of \cite[see Theorem 4.3.1 and Remark 4.3.2 for 
local well-posedness and conservation laws as well as 
Theorem 5.2.1 and Remark 5.2.9 for the regularity $H^2(\R^N)$]{caze} jointly with the 
a priori estimate for all $t>0$ obtained in Lemma~\ref{stime-h1}.
\vskip8pt
\noindent
To our knowledge, the following result is the first attempt to describe the soliton dynamics in presence 
of a singular potential in several dimensions. Under the previous assumptions it holds

\begin{theorem}
\label{main}
Assume that, for a small $\eps>0$, we have
\begin{equation}
	\label{cond-small}
	\gamma\leq \eps^{4\frac{17+\beta}{1-\beta}},\qquad
	|\xi_0|\leq \eps^{\frac{17+\beta}{1-\beta}},\qquad
	\int_{B(x_0,\rho)} (V(x)-V_0) |v_{\eps}(x)|^{2}\le \eps^{N+2\frac{17+\beta}{1-\beta}}.
\end{equation}
Then there exists a map $\theta_{\eps}:\R^+\to [0,2\pi)$ such that
\begin{equation}
\label{mainconclus}
u_{\eps}(x,t)=R\Big(\frac{x-x(t)}{\eps}\Big)
e^{\frac{{\imath}}{\eps}\left[x\cdot  \xi(t)+\theta_{\eps}(t)\right]}+\omega_\eps(x,t),   
\end{equation}
locally uniformly in time and $\|\omega_\eps(\cdot, t)\|_{H^1_\eps}
\leq \Gamma\eps$, for some positive constant $\Gamma$. 
\end{theorem}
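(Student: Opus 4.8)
The plan is to follow the first of the two strategies recalled in the introduction (\cite{bronski,benghimich,benghimich2,keraa}), namely to couple the quantum conservation laws of \eqref{ivp} with the modulational stability of the ground state $R$ due to Weinstein \cite{wa1,wa2}, and to exploit the confinement properties \eqref{deltabblow} together with (C3) in order to keep the whole evolution away from the singularity of $V$. The equation \eqref{ivp} conserves both the mass, so that $\eps^{-N}\norm{u_\eps(t)}_{L^2}^2=m$ for all $t$ by (C4), and the energy
\[
E_\eps(u)=\frac{\eps^2}{4}\int_{\R^N}|\nabla u|^2+\frac12\int_{\R^N}V(x)|u|^2-\frac{1}{2p+2}\int_{\R^N}|u|^{2p+2},
\]
which stays equal to $E_\eps(v_\eps)$; together with Lemma~\ref{stime-h1} this gives a uniform-in-time bound on $\norm{u_\eps(t)}_{H^1_\eps}$. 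The decisive geometric remark is that, since $\mathrm{supp}\,v_\eps\subset B(x_0,\rho)$ with $\rho<|x_0|-\delta$ and $x(t)$ stays in $\set{|x|>\delta}$ by \eqref{deltabblow}, the soliton profile --- concentrated on scale $\eps$ with exponentially small tails by \eqref{prop-R} --- remains, locally uniformly in $t$, inside a region on which $V$ is smooth with derivatives controlled by $\phi(\delta')$ for a suitable $\delta'<\delta$, which is exactly the content of (V3). Thus the \emph{effective} dynamics takes place where $V$ behaves as a smooth potential with bounds governed, through (V1), by negative powers of $\delta$.

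First I would pass to the rescaled variable $y=(x-x(t))/\eps$ and to the modulated ansatz $R(y)\,e^{\frac{\imath}{\eps}(x\cdot\xi(t)+\theta)}$. Inserting it into $E_\eps$, the kinetic cross terms vanish because $R$ is real, and after rescaling one gets
\[
\eps^{-N}E_\eps\Big(R\big(\tfrac{x-x(t)}{\eps}\big)\,e^{\frac{\imath}{\eps}(x\cdot\xi(t)+\theta)}\Big)=E(R)+\tfrac{m}{2}\,\H(x(t),\xi(t))+O(\eps),
\]
where $E(R)=\frac14\int|\nabla R|^2-\frac{1}{2p+2}\int R^{2p+2}$ is an absolute constant and $\H$ is the Newtonian Hamiltonian \eqref{hamil-newton}. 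Because $E_\eps(u_\eps)$ is conserved and $\H(x(t),\xi(t))$ is conserved along \eqref{newton}, the \emph{energy excess}
\[
\mathcal D_\eps(t):=\eps^{-N}E_\eps(u_\eps(t))-E(R)-\tfrac{m}{2}\,\H(x(t),\xi(t))
\]
is constant in $t$; the three conditions in \eqref{cond-small}, which control respectively the carried momentum through $|\xi_0|$, the carried potential energy through $\int(V-V_0)|v_\eps|^2$, and the profile discrepancy through $\gamma$, force $\mathcal D_\eps(0)$, hence $\mathcal D_\eps(t)$, to be as small as the right-hand sides there. The heart of the argument is then the modulational stability estimate of \cite{wa1,wa2}: since at fixed $L^2$-mass $R$ is a nondegenerate minimizer of the free energy, with second variation coercive modulo translations and phase, the smallness of $\mathcal D_\eps(t)$ at the fixed mass (C4) converts, after extracting the bulk phase and translation and optimizing over a phase $\theta_\eps(t)$, into smallness in the rescaled $H^1$-norm of the distance of $u_\eps(t)$ from the soliton manifold. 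This is what eventually yields the remainder $\omega_\eps$ with $\norm{\omega_\eps(\cdot,t)}_{H^1_\eps}\le\Gamma\eps$.

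It remains to identify the optimal center and momentum with the Newtonian pair $(x(t),\xi(t))$. For this I would run the Ehrenfest-type computation on the barycenter $q_\eps(t)=\frac{1}{m\eps^N}\int x|u_\eps|^2$ and the momentum $p_\eps(t)=\frac{1}{m\eps^{N-1}}\Im\int\bar u_\eps\nabla u_\eps$: differentiating in $t$ and using \eqref{ivp} together with the continuity equation gives $\dot q_\eps=p_\eps+o(1)$ and $\dot p_\eps=-\frac{1}{m\eps^N}\int\nabla V\,|u_\eps|^2=-\nabla V(q_\eps)+o(1)$, i.e. an $\eps$-perturbation of \eqref{newton}. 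A Gronwall argument on the confinement interval, started from $(q_\eps(0),p_\eps(0))$ close to $(x_0,\xi_0)$ by (C2)--(C4), then yields $|q_\eps(t)-x(t)|+|p_\eps(t)-\xi(t)|\le\Gamma\eps$ locally uniformly in $t$; combining with the previous step and absorbing the discrepancy into $\omega_\eps$ produces \eqref{mainconclus}.

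The main obstacle is the quantitative control of every error term against the singularity of $V$, which is precisely what the peculiar exponent $\tfrac{17+\beta}{1-\beta}$ in \eqref{cond-small} encodes. Each Taylor expansion of $V$ about the moving center costs a factor $\sup|D^2V|$ or $\sup|D^3V|$ over the support, which by (V1) grows like $\delta^{-(\beta+2)}$ and $\delta^{-(\beta+3)}$, and in the Gronwall step one cannot fix the effective $\delta$ in advance: the guaranteed distance of the soliton from the origin degrades as the tracking error accumulates, and a smaller $\delta$ feeds back larger derivative bounds. The delicate point is therefore to close the estimates self-consistently, showing that as long as $\mathcal D_\eps$ and $|q_\eps-x|$ stay of the size allowed by \eqref{cond-small} the support of $u_\eps(t)$ cannot leave the confinement region, so that the bounds from (V1)--(V3) remain valid and the Gronwall inequality does not blow up before the end of the interval. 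Balancing the powers of $\eps$ gained from concentration and from the exponential decay \eqref{prop-R} against the negative powers of $\delta$ lost to the singular derivatives is what fixes the admissible thresholds in \eqref{cond-small}, and making this bookkeeping close uniformly in $t$ is the crux of the proof.
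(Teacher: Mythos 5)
Your overall skeleton (conservation laws + Weinstein's modulational stability + a Gronwall argument on the discrepancies between the quantum averages and the Newtonian flow) is indeed the strategy of the paper, which follows \cite{bronski,keraa}. However, there are genuine gaps. The structural one: conservation of your quantity $\mathcal D_\eps(t)$ does \emph{not} feed Weinstein's estimate directly. Proposition~\ref{weinstein} requires smallness of $\EE(\Psi^\eps(t))-\EE(R)$ for the rescaled, de-modulated function $\Psi^\eps$ of \eqref{def-psi}, and the paper's Lemma~\ref{as-in-k} shows that this quantity equals the conserved excess \emph{plus} the terms $\xi(t)\cdot\eta_1^\eps(t)+\eta_2^\eps(t)$, where $\eta_1^\eps=m\xi(t)-\int p_\eps$ and $\eta_2^\eps=mV(x(t))-\eps^{-N}\int V|u_\eps|^2$ are \emph{not} conserved and not a priori small for $t>0$. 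One must run a bootstrap (the time $T^{\eps,\gamma}$ of \eqref{t-eps}--\eqref{t-eps-new}) in which the Weinstein closeness is itself an ingredient in bounding $\frac{d}{dt}\eta_i^\eps$ (Propositions~\ref{viene-1}--\ref{viene-2}), and only then Gronwall closes the loop; your Ehrenfest step gestures at this but hides exactly the hard terms inside ``$o(1)$''. There is also a quantitative mismatch: to conclude $\|\omega_\eps\|_{H^1_\eps}\le\Gamma\eps$ one needs the discrepancies of order $\eps^2$ (the paper gets $|\eta^\eps(t)|\le C\eps^2$ and $|x^\eps(t)-x(t)|\le C\eps^2$), because re-centering the profile costs $|x^\eps(t)-x(t)|/\eps$ in $H^1_\eps$; your claimed bound $|q_\eps(t)-x(t)|+|p_\eps(t)-\xi(t)|\le\Gamma\eps$ would leave an $O(1)$ error after re-centering and cannot be ``absorbed into $\omega_\eps$''.

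The second gap concerns the singularity itself, and here your proposed mechanism is not just incomplete but unworkable: you rely on showing that ``the support of $u_\eps(t)$ cannot leave the confinement region'', but the Schr\"odinger flow has infinite speed of propagation, so the compact support of $v_\eps$ is destroyed instantly for $t>0$. The paper uses (C3) only at $t=0$ (it makes $\int_{B(0,\tilde\delta)}|u_\eps(0,x)|^2|\nabla V|\psi_{\tilde\delta}\varphi_\eps=0$) and then controls the mass that leaks toward the origin \emph{dynamically}: double cut-offs $\psi_{\tilde\delta}$ and $\varphi_\eps$ at the $\eps$-dependent radii $r'_\eps=\eps^{4/(1-\beta)}$ (and $\rho'_\eps=\eps^{4/(2-\beta)}$ in Proposition~\ref{viene-2}), the continuity equation \eqref{utile-1}, H\"older inequalities weighted by $V-V_0$, hypothesis (V2) (the $L^N$ bound on $|\nabla V|^2/\sqrt{V-V_0}$), Gagliardo--Nirenberg, and the kinetic-energy bound \eqref{stima-kin}. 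This is also where the threshold in \eqref{cond-small} actually comes from: the hypotheses force $K_\eps(u_\eps,t)-mV_0\le\eps^{2\frac{17+\beta}{1-\beta}}$, and the exponent is calibrated precisely so that $(K_\eps-mV_0)^{1/4}\,\eps^{-\frac12-3\frac{2+\beta}{1-\beta}}\le\eps^2$, which makes the singular terms in Propositions~\ref{viene-1}--\ref{viene-2} compatible with the Gronwall scheme. Your diagnosis --- Taylor remainders costing $\sup|D^2V|\sim\delta^{-(\beta+2)}$ over a shrinking effective $\delta$ that ``degrades as the tracking error accumulates'' --- misidentifies the difficulty: in the paper $\delta$ is fixed once and for all by \eqref{deltabblow} (the Newtonian trajectory never approaches the origin), the bootstrap keeps $|w^\eps(t)|\le 1$, and the true enemy is the dispersive tail of $|u_\eps|^2$ near the singularity, which no support or Taylor argument controls.
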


\noindent
Roughly speaking, in order to preserve the shape of the initial profile and to describe the dynamics, 
one has to start with a bump-like initial data
located sufficiently far from the singularity and with a small enough initial velocity. Precisely, for
the model potential $V(x)=|x|^{-\beta}$ one should assume that $|x_0|\geq  2/\eps^{2(17+\beta)/(\beta-\beta^2)}$
in order to fulfill the last inequality of assumption \eqref{cond-small}.

\noindent
The result is proved by arguments in the spirit of \cite{bronski}. On the other hand, the presence of the singular potential
requires a very careful analysis and new subtle estimates have to be established.
In particular, we refer the reader to Propositions \ref{viene-1} and \ref{viene-2}. 

\noindent
Finally, in Appendix~\ref{semi-sing-sect} we shall provide the estimates related with the soliton
dynamics when the singular potential is truncated around the singularity. We believe that
this can be useful, especially for numerical purposes.

\bigskip

\noindent
Throughout the manuscript we shall always give the explicit dependence of the constants involved in the 
estimates. The constants will often depend on the initial conditions $(x_{0},\xi_{0},v_{\eps})$ 
but in a uniform manner with respect to $\eps$. That is, let $\eps_0$ be such that 
Theorem~\ref{main} holds for $\eps<\eps_0$. Then the different constants 
$\const(x_{0},\xi_{0},v_{\eps})$ in the following can be bounded by $\const(x_{0},\xi_{0}, v_{\eps_0})$.
\medskip

\section{Some preparatory results} \label{sec:solution}

\noindent
Using the variational structure of \eqref{problem}, it is readily checked that the solution $u_{\eps}$ satisfies
\begin{equation}\label{utile-1}
\frac{d}{dt}\, \frac{1}{\eps^{N}}\, |u_{\eps}(t,x)|^{2}\, = - \nabla \cdot p_{\eps}(t,x), \quad\,\,\,  t>0,\,\,\, x\in\R^N,
\end{equation}
\begin{equation}\label{utile-2}
\frac{d}{dt}\, \int_{\R^{N}}  p_{\eps}(t,x) = - \int_{\R^{N}}\frac{1}{\eps^{N}}|u_{\eps}(t,x)|^{2}\nabla V(x),\quad\,\,\, t>0, 
\end{equation}
where
$$
p_\eps (t,x) := \frac{1}{\eps^{N-1}}\, \Im (\bar{u_\eps}(t,x) \, \nabla u_\eps(t,x)), \quad\,\,\,  (t,x)\in\R\times\R^N,
$$
where $\Im(z)$ denotes the imaginary part of the complex number $z$.

\noindent
Both side terms are finite by assumptions on $u_{\eps}$ and (V1) since $|\nabla V| \in L^{N/2}(\R^N)$.
Notice that, equation \eqref{utile-1} implies the conservation of \textit{mass}, for every $\eps>0$,
$$
m := \frac{1}{\eps^{N}}\, \int_{\R^{N}} |u_{\eps}(t,x)|^{2} =\frac{1}{\eps^{N}}\, \int_{\R^{N}} |v_{\eps}(x)|^{2},
$$
and equation \eqref{utile-2} gives the evolution law for the 
momentum 
$$
P_{\eps}(u_{\eps},t) := \int_{\R^{N}} p_{\eps}(t,x) . 
$$
For a global strong solution to \eqref{ivp} the \textit{energy} defined as follows, is conserved 
\begin{equation}\label{energy-tot}
E_{\eps}(u_{\eps},t) := \frac{1}{2\, \eps^{N-2}}\, \int_{\R^{N}} |\nabla u_{\eps}(t,x)|^{2} - 
\frac{1}{(p+1) \eps^{N}}\, \int_{\R^{N}} |u_{\eps}(t,x)|^{2p+2} + \frac{1}{\eps^{N}} \, \int_{\R^{N}} V(x) |u_{\eps}(t,x)|^{2}.
\end{equation}
\noindent
We recall that the function $R$ is a point of minimum for the energy
\begin{equation}\label{energy-R}
\EE(v) := \frac 12  \int_{\R^{N}} |\nabla v(x)|^2 -\frac{1}{p+1}  \int_{\R^{N}} |v(x)|^{2p+2},
\end{equation}
constrained to the manifold of functions in $H^{1}(\R^{N})$ with fixed $L^{2}$-norm equal to $\sqrt{m}$. Let us denote 
$$
\EE_\eps(v) := \frac{1}{2\eps^{N-2}} \int_{\R^{N}} |\nabla v(x)|^2 - \frac{1}{(p+1)\eps^N} \int_{\R^{N}} |v(x)|^{2p+2}.
$$
\noindent
Then, we have the following
\begin{lemma}\label{stima-en-in-1} 
	Assume that $v_{\eps}$ satisfies assumptions {\rm (C1)-(C4)}. Then there exist $\gamma_{0}>0$ 
	and a positive constant merely depending on $R$ and $\xi_{0}$ such that
\begin{equation}\label{uso-ll}
\Big| \EE_\eps(v_\eps) - \EE_\eps \Big(R\Big( \frac{x-x_0}{\eps} \Big)  e^{\imath \, \frac{\xi_0 \cdot x}{\eps}} \Big) \Big| 
\le \const(R,\xi_{0})\, \sqrt{\gamma},
\end{equation}
for every $\gamma \in (0, \gamma_{0})$ and any $\eps>0$ small.
\end{lemma}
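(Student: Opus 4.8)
The plan is to estimate separately the kinetic (gradient) and the nonlinear contributions to the difference $\EE_\eps(v_\eps) - \EE_\eps(w_\eps)$, where I abbreviate $w_\eps(x) := R\big((x-x_0)/\eps\big)e^{\imath\,\xi_0\cdot x/\eps}$. The decisive preliminary observation is that the $\eps$-weights built into $\|\cdot\|_{H^1_\eps}$ and into $\EE_\eps$ are precisely those of the rescaling $u \mapsto u^\sharp$ with $u^\sharp(y) := u(\eps y + x_0)$: one checks at once that $\|u\|_{H^1_\eps} = \|u^\sharp\|_{H^1(\R^N)}$, that $\frac{1}{\eps^{N-2}}\int|\nabla u|^2 = \int|\nabla u^\sharp|^2$ and that $\frac{1}{\eps^{N}}\int|u|^{2p+2} = \int|u^\sharp|^{2p+2}$. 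Hence, setting $\tilde v := v_\eps^\sharp$ and $\tilde w := w_\eps^\sharp$, assumption (C2) reads simply $\|\tilde v - \tilde w\|_{H^1} < \sqrt\gamma$, and every explicit $\eps$ disappears. Note moreover that $\tilde w(y) = R(y)\,e^{\imath\,\xi_0\cdot y}\,e^{\imath\,\xi_0\cdot x_0/\eps}$, so $|\tilde w| = R$ (whence all $L^r$-norms of $\tilde w$ equal those of $R$), while, since $R$ is real and the mixed term is purely imaginary, $\int|\nabla\tilde w|^2 = \|\nabla R\|_{L^2}^2 + |\xi_0|^2\|R\|_{L^2}^2$.

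For the kinetic part I would use, with $a = \nabla\tilde v$ and $b = \nabla\tilde w$, the pointwise identity $|a|^2 - |b|^2 = |a-b|^2 + 2\Re\big(\bar b\cdot(a-b)\big)$, which gives
\[
\frac{1}{2}\Big(\int|\nabla\tilde v|^2 - \int|\nabla\tilde w|^2\Big)
= \frac{1}{2}\,\|\nabla(\tilde v - \tilde w)\|_{L^2}^2 + \Re\!\int \overline{\nabla\tilde w}\cdot\nabla(\tilde v - \tilde w).
\]
The first term is bounded by $\tfrac12\gamma \le \tfrac12\sqrt{\gamma_0}\,\sqrt\gamma$, while Cauchy--Schwarz controls the second by $\|\nabla\tilde w\|_{L^2}\,\|\nabla(\tilde v - \tilde w)\|_{L^2} \le \big(\|\nabla R\|_{L^2}^2 + |\xi_0|^2\|R\|_{L^2}^2\big)^{1/2}\sqrt\gamma$; both are of the announced form $\const(R,\xi_0)\sqrt\gamma$.

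For the nonlinear part, writing $q := 2p+2$, I would invoke the elementary inequality $\big||z_1|^q - |z_2|^q\big| \le q\big(|z_1|^{q-1} + |z_2|^{q-1}\big)\,|z_1 - z_2|$, followed by Hölder with the conjugate exponents $q$ and $q/(q-1)$, obtaining
\[
\int\big||\tilde v|^q - |\tilde w|^q\big|
\le q\big(\|\tilde v\|_{L^q}^{q-1} + \|\tilde w\|_{L^q}^{q-1}\big)\,\|\tilde v - \tilde w\|_{L^q}.
\]
Since $0 < p < 2/N$ forces $2 < q < 2 + 4/N < 2^{*}$, the embedding $H^1(\R^N)\hookrightarrow L^q(\R^N)$ is continuous, so $\|\tilde v - \tilde w\|_{L^q} \le C_S\sqrt\gamma$ for a Sobolev constant $C_S$; moreover $\|\tilde w\|_{L^q} = \|R\|_{L^q}$ and, by the triangle inequality, $\|\tilde v\|_{L^q} \le \|R\|_{L^q} + C_S\sqrt{\gamma_0}$. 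Thus for $\gamma < \gamma_0$ the prefactor is a constant depending only on $R$, and after dividing by $p+1$ this piece is again $\const(R)\sqrt\gamma$.

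Summing the two estimates yields the claim. The only step requiring genuine care is the nonlinear term: one must check that $q = 2p+2$ is $H^1$-subcritical (which is exactly what $p < 2/N$ guarantees) and that $\|\tilde v\|_{L^q}$ remains bounded uniformly in $\eps$ and in $\gamma$, which is where the smallness threshold $\gamma_0$ enters. Everything else is the bookkeeping of the scaling together with two applications of Cauchy--Schwarz and Hölder; in particular no information on the singular potential $V$ is needed here, since $\EE_\eps$ does not involve $V$ at all.
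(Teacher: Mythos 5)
Your proof is correct, and it takes a genuinely different route from the paper's. The paper works directly with the $\eps$-weighted quantities and applies, to both the gradient term and the $L^{2p+2}$ term, the parametrized elementary inequality $\big||a+b|^r-|b|^r\big|\le \zeta|b|^r+C_\zeta|a|^r$ with $C_\zeta\sim\zeta^{1-r}$, tuning $\zeta=\sqrt\gamma$ for the quadratic part and $\zeta=\gamma$ for the nonlinear part (the latter combined with the Gagliardo--Nirenberg inequality to control $\|v_\eps-w_\eps\|_{L^{2p+2}}$). You instead rescale first, via $u^\sharp(y)=u(\eps y+x_0)$, so that $\EE_\eps(u)=\EE(u^\sharp)$ and $\|u\|_{H^1_\eps}=\|u^\sharp\|_{H^1}$ and all explicit $\eps$'s vanish; then you treat the quadratic term \emph{exactly}, via $|a|^2-|b|^2=|a-b|^2+2\Re\big(\bar b\cdot(a-b)\big)$ and Cauchy--Schwarz, and the power term via the mean-value inequality $\big||z_1|^q-|z_2|^q\big|\le q\big(|z_1|^{q-1}+|z_2|^{q-1}\big)|z_1-z_2|$, H\"older, and the subcritical Sobolev embedding (correctly noting that $p<2/N$ gives $2<2p+2<2^*$). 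Your approach buys transparency: no parameter optimization is needed, the $\sqrt\gamma$ rate for the gradient term is visibly just the cross term under Cauchy--Schwarz, and the uniform bound on $\|\tilde v\|_{L^q}$ is where $\gamma_0$ enters. The paper's approach buys a single uniform template for all powers $r$ and yields the slightly better rate $O(\gamma)$ for the nonlinear piece (you get $O(\sqrt\gamma)$ there), though this improvement is immaterial for the stated conclusion. Both deliver $\const(R,\xi_0)\sqrt\gamma$ (with implicit dependence on $N$ and $p$, exactly as in the paper's own proof), so your argument is a valid, and arguably more elementary, substitute.
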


\begin{proof}
We shall use the elementary inequality
\begin{equation}\label{brezis-lieb}
\forall\zeta>0 \quad \exists C_\zeta>0: \quad \big| |a+b|^r - |b|^r \big| \le \zeta  |b|^r + C_\zeta  |a|^r,
\end{equation}
for all $a,b\in \C$ and $r\in(1,\infty)$, where $C_\zeta$ blows up as $\zeta^{1-r}$ as $\zeta$ goes to zero. Indeed, we first write
\begin{align*}
& \Big|\frac{1}{\eps^{N-2}} \int_{\R^{N}} |\nabla v_\eps(x)|^2 - 
\frac{1}{\eps^{N-2}}\int_{\R^{N}} \Big| \nabla \Big[R\Big( \frac{x-x_0}{\eps} \Big) \, e^{\imath \, \frac{\xi_0 \cdot x}{\eps}}\Big] \Big|^2 \Big| \\
&\le \frac{\zeta}{\eps^{N-2}} \int_{\R^{N}} \Big| \nabla \Big[R\Big( \frac{x-x_0}{\eps} \Big) \, 
e^{\imath \, \frac{\xi_0 \cdot x}{\eps}}\Big] \Big|^2 + C_\zeta \Big\| v_\eps(x) - 
R\Big( \frac{x-x_0}{\eps} \Big) \, e^{\imath \, \frac{\xi_0 \cdot x}{\eps}} \Big\|^2_{H^1_\eps} \\
& \le \zeta \Big( \int_{\R^{N}} |\nabla R|^2  + m |\xi_0|^2 \Big) 
+ C_\zeta \, \gamma = O(\sqrt{\gamma}),
\end{align*}
after choosing $\zeta = \sqrt{\gamma}$ and using the asymptotics $C_\zeta \sim \gamma^{-1/2}$ for small $\gamma$. The constant in 
$O(\sqrt{\gamma})$ depends only on $R$ and $\xi_0$. Concerning the second term in the energy $\EE_\eps$, we get
\begin{align*}
& \Big|\frac{1}{\eps^{N}} \int_{\R^{N}} |v_\eps(x)|^{2p+2} - \frac{1}{\eps^{N}}\int_{\R^{N}} 
\Big|R\Big( \frac{x-x_0}{\eps} \Big) \, e^{\imath \, \frac{\xi_0 \cdot x}{\eps}} \big|^{2p+2}  \Big| \\
&\le \frac{\zeta}{\eps^{N}} \int_{\R^{N}} \Big|R\Big( \frac{x-x_0}{\eps} \Big)\Big|^{2p+2}  + 
 \frac{C_\zeta}{\eps^{N}} \int_{\R^{N}} \Big| v_\eps(x) - R\Big( \frac{x-x_0}{\eps} \Big) 
 e^{\imath \, \frac{\xi_0 \cdot x}{\eps}}\Big|^{2p+2} 
\end{align*}
By the Gagliardo-Nirenberg inequality 
\begin{equation}\label{gagl-nir}
\| v \|_{L^q} \le \const(q)\, \|v\|_{L^2}^{1-\frac N2 + \frac Nq}\, \| \nabla v \|_{L^2}^{\frac N2 - \frac Nq},\qquad 2\le q\le 2^*,
\end{equation}
choosing $q=2p+2$, in light of (C2) we obtain
$$
\Big|\frac{1}{\eps^{N}} \int_{\R^{N}} |v_\eps(x)|^{2p+2} - \frac{1}{\eps^{N}}\int_{\R^{N}} 
\Big|R\Big( \frac{x-x_0}{\eps} \Big)  e^{\imath \, \frac{\xi_0 \cdot x}{\eps}} \Big|^{2p+2} 
\Big| \le \zeta \|R\|_{L^{2p+2}}^{2p+2} + C_\zeta \const(2p+2)\, \gamma^{p+1} = O(\gamma),
$$
choosing $\zeta=\gamma$ and using $C_\zeta \sim \gamma^{1-p}$ as $\gamma\to 0$. 
Here the constant in $O(\gamma)$ depends only on $R$ and $p$.
\end{proof}

\begin{lemma}\label{stima-en-in-2}
Assume that $v_{\eps}$ satisfy assumptions {\rm (C1)-(C4)}. Then there exists a positive
constant only depending on $R, x_{0}$ and $\xi_{0}$ such that
\begin{equation}\label{uso-ll2}
\Big| \frac{1}{\eps^N} \int_{\R^{N}} V(x) |v_\eps(x)|^2 - mV(x_{0}) \Big| \le \const(R,x_{0}, \xi_{0})\, (\gamma + \eps^{2}) \phi(\delta),
\end{equation}
for all $\gamma>0$ and $\eps>0$, where $\phi$ is defined in \eqref{il-phi} and 
$\delta= \delta(x_{0}, \xi_{0})$ is defined in~\eqref{deltabblow}.
\end{lemma}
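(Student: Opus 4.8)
The plan is to reduce everything to a weighted second-moment estimate and to exploit the radial symmetry of the datum. Since $v_\eps$ carries $L^2$-mass $\eps^N m$ by (C4), I would first rewrite the quantity to be estimated as a centered integral,
\[
\frac{1}{\eps^N}\int_{\R^N} V(x)|v_\eps(x)|^2 - mV(x_0) = \frac{1}{\eps^N}\int_{\R^N}\big(V(x)-V(x_0)\big)|v_\eps(x)|^2,
\]
where by (C3) the integration is effectively over $B(x_0,\rho)=\mathrm{supp}\,v_\eps$. The crucial structural observation is that (C3) forces $\rho<|x_0|-\delta$, so that $B(x_0,\rho)\subset B_\delta$; since $B(x_0,\rho)$ is convex, every segment joining $x_0$ to a point $x$ of the support stays in $B_\delta$, where (V3) guarantees $|D^2V|\le\phi(\delta)$.

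Next I would Taylor-expand $V$ about $x_0$ with the integral remainder, writing $V(x)-V(x_0)=\nabla V(x_0)\cdot(x-x_0)+\int_0^1(1-s)\,(x-x_0)^{\top}D^2V(x_0+s(x-x_0))(x-x_0)\,ds$. The first-order term drops out: by (C1) the density $|v_\eps|^2$ is radial about $x_0$, hence $\int(x-x_0)|v_\eps|^2=0$ and $\nabla V(x_0)$ contributes nothing. The remainder is controlled on the support by $\tfrac12\phi(\delta)|x-x_0|^2$, which reduces the problem to the weighted-moment bound
\[
\frac{1}{\eps^N}\int_{\R^N}|x-x_0|^2|v_\eps(x)|^2 \le \const(R,x_0,\xi_0)\,(\gamma+\eps^2).
\]

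For this moment I would compare $v_\eps$ with $R_\eps(x):=R((x-x_0)/\eps)e^{\imath\xi_0\cdot x/\eps}$ via $|v_\eps|^2\le 2|R_\eps|^2+2|v_\eps-R_\eps|^2$, which keeps the two small parameters linear by avoiding cross terms. The soliton part gives, after the change of variable $y=(x-x_0)/\eps$ and the exponential decay \eqref{prop-R} of $R$, exactly $2\eps^2\int_{\R^N}|y|^2|R(y)|^2\,dy=O(\eps^2)$; this is where the $\eps$-concentration of the profile is essential, since the naive estimate $|x-x_0|^2\le\rho^2$ on the support would only produce an $O(1)$ term. For the difference part I would split according to the support: on $B(x_0,\rho)$ the weight is $\le\rho^2$ and $\frac{1}{\eps^N}\int|v_\eps-R_\eps|^2\le\|v_\eps-R_\eps\|^2_{H^1_\eps}<\gamma$ by (C2), yielding $O(\gamma)$; outside $B(x_0,\rho)$ the difference reduces to $-R_\eps$ and the tail $2\eps^2\int_{|y|>\rho/\eps}|y|^2|R(y)|^2\,dy$ is again $O(\eps^2)$ by \eqref{prop-R}. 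Combining these with the Taylor reduction gives $\big|\frac{1}{\eps^N}\int V|v_\eps|^2-mV(x_0)\big|\le\tfrac12\phi(\delta)\,\frac{1}{\eps^N}\int|x-x_0|^2|v_\eps|^2\le\const(R,x_0,\xi_0)(\gamma+\eps^2)\phi(\delta)$, with $\rho<|x_0|$ absorbed into the constant.

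I expect the main obstacle to be the bookkeeping that keeps the estimate localized on $B_\delta$: one must use convexity of the support together with the separation $\rho<|x_0|-\delta$ to legitimately bound the Hessian by $\phi(\delta)$ throughout the remainder, and one must resist bounding $|x-x_0|^2$ by $\rho^2$ globally, which would destroy the $\eps^2$ gain. The exponentially small tail of $R$ and the exact vanishing of the first moment are precisely the two ingredients that make $\gamma$ and $\eps^2$ appear linearly, as required.
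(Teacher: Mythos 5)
Your proposal is correct and follows essentially the same route as the paper: Taylor expansion of $V$ about $x_0$ (the paper uses the Lagrange form of the remainder, you use the integral form), cancellation of the first-order term by the radial symmetry (C1), and control of the resulting second moment by splitting $|v_\eps|^2\le 2|R_\eps|^2+2|v_\eps-R_\eps|^2$, where the soliton part gives $O(\eps^2)$ via the exponential decay \eqref{prop-R} and the difference part gives $O(\gamma)$ via (C2) and the weight bound $|x-x_0|^2\le\rho^2$ on the support. The only differences are cosmetic: the paper rescales $y=(x-x_0)/\eps$ before expanding, and you make explicit the geometric fact ($B(x_0,\rho)\subset B_\delta$ plus convexity) that justifies bounding the Hessian by $\phi(\delta)$, which the paper uses implicitly.
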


\begin{proof}
We write
\begin{align*}
\frac{1}{\eps^N} \int_{\R^{N}} V(x) |v_\eps(x)|^2  &= \frac{1}{\eps^N} \int_{B(x_0,\rho)} V(x) |v_\eps(x)|^2 
= \int_{B(0,\frac{\rho}{\eps})} V(x_0+\eps y) |v_\eps(x_0+\eps y)|^2 \\
&= \int_{B(0,\frac{\rho}{\eps})} V(x_0) |v_\eps(x_0+\eps y)|^2  + 
\int_{B(0,\frac{\rho}{\eps})} \eps ( \nabla V(x_0)\cdot y) |v_\eps(x_0+\eps y)|^2 \\
& +\int_{B(0,\frac{\rho}{\eps})} \eps^2 (\nabla^2 V(x_0+\eps \omega_\eps(y)y)y \cdot y) |v_\eps(x_0+\eps y)|^2 \\
&\leq m V(x_0) + O( \eps^2 \, \phi(\delta)) \int_{B(0,\frac{\rho}{\eps})} |y|^2 |v_\eps(x_0+\eps y)|^2,
\end{align*}
for some $\omega_\eps(y)\in (0,1)$, where we have used the radial symmetry of $v_\eps(x)$, the definition of $\phi(\delta)$ 
in \eqref{il-phi} and assumptions (V3) and (C4). Moreover, we also have
\begin{align*}
\int_{B(0,\frac{\rho}{\eps})} |y|^2 |v_\eps(x_0+\eps y)|^2  &\le 2 \int_{B(0,\frac{\rho}{\eps})} 
|y|^2 |v_\eps(x_0+\eps y) - R(y)e^{\frac{\imath}{\eps} \xi_0\cdot (x_0+\eps y)} |^2 
+ 2\int_{B(0,\frac{\rho}{\eps})} |y|^2 |R(y)|^2 \\
&\le  \frac{2\rho^{2}}{\eps^{2}}\, 
\Big\| v_\eps(x) - R\Big( \frac{x-x_0}{\eps} \Big) \, e^{\imath \, \frac{\xi_0 \cdot x}{\eps}} 
\Big\|^2_{H^1_\eps} + 2\int_{\R^{N}} |y|^2 |R(y)|^2,
\end{align*}
where the last integral is finite by virtue of~\eqref{prop-R}.
\end{proof}

\noindent
We now state the following uniform bound for the $H^{1}_{\eps}$-norm of solutions.

\begin{lemma}
	\label{stime-h1}
Let $u_{\eps}(t,x)$ be a global strong solution of problem \eqref{ivp}. Then
$$
M(x_{0},\xi_{0},v_{\eps}):= \sup\limits_{t\in \R}\, \| u_{\eps}(t,x) \|_{H^{1}_{\eps}} < +\infty.
$$
\end{lemma}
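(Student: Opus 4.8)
The plan is to combine the two conservation laws already recorded---conservation of mass and of the energy $E_{\eps}$ in \eqref{energy-tot}---with the subcritical Gagliardo--Nirenberg inequality \eqref{gagl-nir}. First I would use conservation of mass, $\frac{1}{\eps^{N}}\int_{\R^{N}}|u_{\eps}(t,x)|^{2}=m$ for every $t$, to write
$$
\|u_{\eps}(t,\cdot)\|_{H^{1}_{\eps}}^{2}=G_{\eps}(t)+m,\qquad G_{\eps}(t):=\frac{1}{\eps^{N-2}}\int_{\R^{N}}|\nabla u_{\eps}(t,x)|^{2},
$$
so that it suffices to bound $G_{\eps}(t)$ uniformly in $t$. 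Solving \eqref{energy-tot} for the gradient term, using $E_{\eps}(u_{\eps},t)=E_{\eps}(u_{\eps},0)$, and discarding the potential contribution (which, being subtracted and satisfying $\frac{1}{\eps^{N}}\int V|u_{\eps}|^{2}\ge V_{0}m\ge 0$ by (V2), can only lower the right-hand side), one obtains
$$
\tfrac12 G_{\eps}(t)\le E_{\eps}(u_{\eps},0)+\frac{1}{(p+1)\eps^{N}}\int_{\R^{N}}|u_{\eps}(t,x)|^{2p+2}.
$$

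The heart of the matter is to estimate the last term so that it can be reabsorbed. Applying \eqref{gagl-nir} with $q=2p+2$ and raising to the power $2p+2$, the gradient appears with exponent $(2p+2)(\frac N2-\frac N{2p+2})=Np$, while $\|u_{\eps}\|_{L^{2}}$ appears with exponent $2p+2-Np$. Substituting $\|u_{\eps}\|_{L^{2}}^{2}=\eps^{N}m$ and $\|\nabla u_{\eps}\|_{L^{2}}^{2}=\eps^{N-2}G_{\eps}(t)$, one checks that every power of $\eps$ cancels, leaving
$$
\frac{1}{\eps^{N}}\int_{\R^{N}}|u_{\eps}(t,x)|^{2p+2}\le \const(p)\, m^{\frac{2p+2-Np}{2}}\, G_{\eps}(t)^{\frac{Np}{2}}.
$$
This exact $\eps$-invariance is precisely why the $H^{1}_{\eps}$-scaling was chosen, and it is the one computation I would carry out with care, since it also guarantees the $\eps$-uniformity of the constants claimed in the remark at the end of the introduction.

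The decisive structural fact is that the hypothesis $0<p<2/N$ forces $Np<2$, that is $\frac{Np}{2}<1$, so the nonlinear term grows strictly sublinearly in $G_{\eps}$. By Young's inequality (with conjugate exponents $\frac{2}{Np}$ and its dual) one has $\const(p)\,m^{\frac{2p+2-Np}{2}}\,G_{\eps}^{Np/2}\le \frac14 G_{\eps}+C$, where $C$ depends only on $p$ and $m$. Inserting this gives $\frac14 G_{\eps}(t)\le E_{\eps}(u_{\eps},0)+C$, a bound independent of $t$. Since $E_{\eps}(u_{\eps},0)$ is finite---the datum $v_{\eps}\in H$ is supported in $B(x_{0},\rho)$ with $0\notin B(x_{0},\rho)$, where $V$ is smooth and bounded by (V3), and $v_{\eps}\in L^{2p+2}$ by Sobolev embedding---we conclude $\sup_{t}G_{\eps}(t)<+\infty$, hence $M(x_{0},\xi_{0},v_{\eps})=\sup_{t}\|u_{\eps}(t,\cdot)\|_{H^{1}_{\eps}}<+\infty$.

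The main obstacle is the control of the focusing nonlinearity: it is exactly the \emph{mass-subcritical} threshold $p<2/N$ that makes the absorption into $\tfrac12 G_{\eps}$ possible, for otherwise $\frac{Np}{2}$ would reach or exceed $1$ and the argument (indeed the global bound itself) would break down. By contrast, the singular potential causes no trouble here: it is disposed of for free by keeping track of its sign, and the only delicate point is the exponent bookkeeping in the Gagliardo--Nirenberg step together with the verification that the powers of $\eps$ truly cancel.
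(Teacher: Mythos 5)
Your proposal is correct and follows essentially the same route as the paper: conservation of mass and energy, the Gagliardo--Nirenberg inequality \eqref{gagl-nir} with $q=2p+2$, and absorption of the nonlinear term via Young's inequality, which is possible precisely because $p<2/N$ makes the gradient exponent $Np/2<1$. The only cosmetic difference is that you discard the non-negative potential term and invoke mass conservation for the $L^2$ part, whereas the paper keeps $\frac{V_0}{\eps^N}\int|u_\eps|^2$ on the right-hand side and concludes from $V_0>0$; the substance is identical.
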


\begin{proof}
By choosing $q=2p+2$ in \eqref{gagl-nir}, by virtue of the conservation of mass, we obtain
\begin{align*}
\| u_{\eps}(t,\cdot) \|_{L^{2p+2}}^{2p+2} &\le \const(p)\, \|u_{\eps}(t,\cdot) \|_{L^{2}}^{2\left(1-\frac N2 + 
\frac{N}{2p+2} \right)(p+1)}\, \| \nabla u_{\eps}(t,\cdot) \|_{L^{2}}^{2\left(\frac N2 - \frac{N}{2p+2} \right)(p+1)} \\
& = \const(p)\, m^{1+p\left(1-\frac N2\right)}\, \left(\eps^{N}\right)^{1+p
\left(1-\frac N2\right)}\, \Big( \frac{1}{\eps^{N-2}} \, \| \nabla u_{\eps}(t,\cdot) 
\|_{L^{2}}^2 \Big)^{\frac{pN}{2}}\, (\eps^{N-2})^{\frac{pN}{2}} \\
& =\const(p)\, m^{1+p\left(1-\frac N2\right)}\, \eps^{N}\, \Big( \frac{1}{\eps^{N-2}} 
\| \nabla u_{\eps}(t,\cdot) \|_{L^{2}}^2 \Big)^{\frac{pN}{2}},\qquad t>0
\end{align*}
In turn, since $p< \frac 2N$, Young's inequality yields
$$
\frac{1}{(p+1) \eps^{N}}\, \int_{\R^{N}} |u_{\eps}(t,x)|^{2p+2}\leq \frac{1}{4\eps^{N-2}}\, 
\int_{\R^{N}}|\nabla u_{\eps}(t,x)|^{2}+\const(p),\qquad t>0.
$$
Therefore, by the conservation of energy, we can write
\begin{equation*}
E_{\eps}(u_{\eps},0) = E_{\eps}(u_{\eps},t) \ge 
\frac{1}{4\eps^{N-2}}\int_{\R^{N}} |\nabla u_{\eps}(t,x)|^{2}
+ \frac{V_0}{\eps^{N}} \int_{\R^{N}} |u_{\eps}(t,x)|^{2}-\const(p),
\end{equation*}
and the thesis follows by $V_{0}>0$.
\end{proof}

\begin{remark}\rm
By virtue of Lemmas~\ref{stima-en-in-1} and~\ref{stima-en-in-2}, 
the initial energy $E_{\eps}(u_{\eps},0)$ remains uniformly bounded with respect to $\eps>0$.
In turn, we have $\sup_{\eps>0} M(x_{0},\xi_{0},v_{\eps})<+\infty.
$
\end{remark}

\noindent
Introducing now the radial notation
\begin{equation}\label{u-radiale}
u_\eps(t,x) = |u_\eps(t,x)| e^{iS_\eps(t,x)},\quad x\in\R^N,\,\, t>0,
\end{equation}
we write
\begin{equation}
	\label{p-radiale}
p_{\eps}(t,x) = \frac{1}{\eps^{N-1}}\, |u_{\eps}(t,x)|^{2}\, \nabla S_{\eps}(t,x),\quad x\in\R^N,\,\, t>0,
\end{equation}
for the momentum density, and the total energy $E_{\eps}$ can be split into the sum
$$
E_{\eps}(u_{\eps},t)= J_{\eps}(u_{\eps},t) + K_{\eps}(u_{\eps},t),\quad t>0,
$$
where $J_{\eps}$ is the \textit{internal energy} and it is defined as
\begin{equation}\label{int-energy}
J_{\eps}(u_{\eps},t) := \frac{1}{2\, \eps^{N-2}}\, \int_{\R^{N}} \big|\nabla |u_{\eps}(t,x)| \big|^{2} 
- \frac{1}{(p+1)\, \eps^{N}}\, \int_{\R^{N}} |u_{\eps}(t,x)|^{2p+2},\quad t>0,
\end{equation}
and $K_{\eps}$ is the \textit{kinetic energy} and it is defined as
\begin{equation}\label{kin-energy}
K_{\eps}(u_{\eps},t) := \frac{1}{2\, \eps^{N-2}}\, \int_{\R^{N}} |u_{\eps}(t,x)|^{2} 
|\nabla S_{\eps}(t,x)|^{2} + \frac{1}{\eps^{N}} \, \int_{\R^{N}} V(x) |u_{\eps}(t,x)|^{2},\quad t>0.
\end{equation}
\noindent
Then, we have the following

\begin{proposition}\label{import}
Let $u_{\eps}$ be a global strong solution of problem~\eqref{ivp} with energy $E_{\eps}$ as in formula \eqref{energy-tot}. 
Then there exist $\gamma_{0}>0$ and a constant depending only on $R,x_{0}$ and $\xi_{0}$ such that, for all $t>0$, 
\begin{equation}\label{forma-en}
\big| E_\eps(u_\eps,t) -  \EE(R) - m \H(x(t),\xi(t)) \big| \le \const(R,x_{0},\xi_{0})\, (\sqrt{\gamma} + \eps^{2}) \phi(\delta),
\end{equation}
for all $\eps>0$ and $\gamma \in (0,\gamma_{0})$, being $\phi$  
defined in \eqref{il-phi}, $\delta= \delta(x_{0}, \xi_{0})$ defined in 
\eqref{deltabblow}, $\H$ the Hamiltonian function \eqref{hamil-newton} and 
$(x(t),\xi(t))$ the solution to the Newtonian system \eqref{newton}.
Furthermore,
\begin{equation}\label{stima-kin}
0\le K_{\eps}(u_{\eps},t) 
\le \frac 12 m |\xi_{0}|^{2} 
+ \frac{1}{\eps^{N}} \, \int_{\R^{N}} V(x) |v_{\eps}(x)|^{2} + \const(R,\xi_{0}) \sqrt{\gamma},
\end{equation}
for every $t>0$ and for any $\gamma \in (0,\gamma_{0})$.
\end{proposition}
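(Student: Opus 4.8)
The plan is to reduce both estimates to conservation of energy for \eqref{ivp} together with conservation of $\H$ along the Newtonian flow \eqref{newton}, and then to control the resulting differences by Lemmas~\ref{stima-en-in-1} and~\ref{stima-en-in-2}. The single algebraic fact that makes the scheme work is the exact identity
\[
\EE_\eps\Big(R\Big(\tfrac{x-x_0}{\eps}\Big)e^{\imath\,\xi_0\cdot x/\eps}\Big)=\EE(R)+\tfrac12 m|\xi_0|^2,
\]
which I would establish by the change of variables $y=(x-x_0)/\eps$: the modulation factor $e^{\imath\xi_0\cdot x/\eps}$ contributes a gradient term $\eps^{-1}\imath\xi_0 R$ that is pointwise orthogonal in $\C^N$ to the real vector $\eps^{-1}\nabla R$, so the cross term drops and $|\nabla\psi|^2=\eps^{-2}(|\nabla R|^2+|\xi_0|^2R^2)$; the weights $\eps^{N-2},\eps^{N}$ are then exactly absorbed by the Jacobian, and $\int_{\R^N} R^2=m$ supplies the kinetic piece.

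For \eqref{forma-en} I would first invoke conservation of energy, $E_\eps(u_\eps,t)=E_\eps(u_\eps,0)=\EE_\eps(v_\eps)+\eps^{-N}\int_{\R^N}V|v_\eps|^2$, and conservation of the Hamiltonian along \eqref{newton}, $m\H(x(t),\xi(t))=m\H(x_0,\xi_0)=\tfrac12 m|\xi_0|^2+mV(x_0)$. Subtracting and inserting $\psi:=R((x-x_0)/\eps)e^{\imath\xi_0\cdot x/\eps}$ through the identity above, the quantity inside the absolute value in \eqref{forma-en} splits as
\[
\big[\EE_\eps(v_\eps)-\EE_\eps(\psi)\big]+\Big[\tfrac{1}{\eps^N}\int_{\R^N}V|v_\eps|^2-mV(x_0)\Big].
\]
Lemma~\ref{stima-en-in-1} bounds the first bracket by $\const(R,\xi_0)\sqrt\gamma$ and Lemma~\ref{stima-en-in-2} bounds the second by $\const(R,x_0,\xi_0)(\gamma+\eps^2)\phi(\delta)$; since $\phi(\delta)\ge V_0>0$ and $\gamma\le\sqrt\gamma$ for $\gamma<1$, both terms merge into $\const(R,x_0,\xi_0)(\sqrt\gamma+\eps^2)\phi(\delta)$.

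For \eqref{stima-kin}, nonnegativity of $K_\eps$ is immediate from \eqref{kin-energy} as both summands are nonnegative (using $V\ge V_0>0$). For the upper bound the crux is the lower estimate $J_\eps(u_\eps,t)\ge\EE(R)$. I would obtain it from the rescaling $w(y):=|u_\eps(t,\eps y)|$, under which $\int_{\R^N}|w|^2=\eps^{-N}\int_{\R^N}|u_\eps(t)|^2=m$ by conservation of mass and, after checking the weights, $J_\eps(u_\eps,t)=\EE_\eps(|u_\eps(t)|)=\EE(w)$; since $R$ minimizes $\EE$ under the constraint $\|\cdot\|_{L^2}^2=m$, this forces $\EE(w)\ge\EE(R)$. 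Writing $K_\eps=E_\eps-J_\eps\le E_\eps(u_\eps,t)-\EE(R)$, using conservation of energy and then $\EE_\eps(v_\eps)\le\EE_\eps(\psi)+\const(R,\xi_0)\sqrt\gamma=\EE(R)+\tfrac12 m|\xi_0|^2+\const(R,\xi_0)\sqrt\gamma$ from Lemma~\ref{stima-en-in-1} and the identity above yields exactly \eqref{stima-kin}.

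I expect the main obstacle to be the internal-energy lower bound $J_\eps\ge\EE(R)$: one must verify that the rescaling $u\mapsto u(\eps\,\cdot)$ both preserves the $L^2$-mass constraint and carries $\EE_\eps$ onto $\EE$, so that the variational characterization of $R$ from \cite{kwong} applies to the \emph{modulus} $|u_\eps(t)|$ rather than to $u_\eps(t)$ itself. The diamagnetic content of the argument is hidden precisely in replacing $u_\eps$ by $|u_\eps|$ when passing from $E_\eps$ to $J_\eps+K_\eps$.
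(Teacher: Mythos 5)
Your proposal is correct and follows essentially the same route as the paper's own proof: conservation of $E_\eps$ and of $\H$ along \eqref{newton}, the exact identity $\EE_\eps\big(R(\tfrac{\cdot-x_0}{\eps})e^{\imath\xi_0\cdot x/\eps}\big)=\EE(R)+\tfrac12 m|\xi_0|^2$, Lemmas~\ref{stima-en-in-1} and~\ref{stima-en-in-2} for \eqref{forma-en}, and the constrained minimality $\EE(R)\le\EE(|u_\eps(t,\eps\,\cdot)|)=J_\eps(u_\eps,t)$ followed by $K_\eps=E_\eps-J_\eps$ for \eqref{stima-kin}. The only difference is that you explicitly verify facts the paper asserts without proof (the vanishing cross term in the gradient of the modulated profile and the rescaling bookkeeping behind $J_\eps=\EE(|u_\eps(t,\eps\,\cdot)|)$), which is sound.
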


\begin{proof}
By the conservation of the energy $E_{\eps}$ for solutions of \eqref{ivp}, we can write
$$
E_\eps(u_\eps,t) = E_\eps(u_\eps,0) = \EE_\eps(v_\eps) + \frac{1}{\eps^N} \int_{\R^{N}} V(x) |v_\eps(x)|^2, \quad t>0. 
$$
Taking into account
$$
\EE_\eps \Big(R\Big( \frac{x-x_0}{\eps} \Big)  e^{\imath \, \frac{\xi_0 \cdot x}{\eps}} \Big) = \EE(R) + \frac 12 m |\xi_0|^2,
$$
and that $\H(x(t),\xi(t)) = \H(x_{0},\xi_{0})$ for all $t>0$ by the conservation of the Hamiltonian for \eqref{newton},
inequality \eqref{forma-en} follows from Lemma~\ref{stima-en-in-1} and Lemma~\ref{stima-en-in-2}.
To prove~\eqref{stima-kin}, observe that since $\| u_{\eps}(t,\eps\,\cdot) \|_{L^{2}}^{2} =m$ for all $t>0$
and $R$ is a point of constrained minimum for $\EE$ on the $L^2$ sphere or radius $\sqrt{m}$, we get
$$
\EE(R) \leq \EE(|u_\eps(t,\eps,\cdot )|)= J_{\eps}(u_{\eps},t),\quad t>0.
$$
Hence, we get 
\begin{align*}
0\le K_{\eps}(u_{\eps},t) &= E_{\eps}(u_\eps,t) - J_{\eps}(u_{\eps},t) \le E_{\eps}(u_{\eps},0) - \EE(R) \\
&= \EE_\eps(v_\eps(x)) + \frac{1}{\eps^N} \int V(x) |v_\eps(x)|^2 - \EE(R) - \frac 12 m |\xi_{0}|^{2} + \frac 12 m |\xi_{0}|^{2} \\
& \le \frac 12 m |\xi_{0}|^{2} + \frac{1}{\eps^N} \int V(x) |v_\eps(x)|^2 +\const(R,\xi_{0},p) \sqrt{\gamma},\quad t>0,
\end{align*}
by virtue of Lemma \ref{stima-en-in-1}.
\end{proof}

\section{Intermediate proofs} \label{sec:proof}
\noindent
As in \cite{bronski,keraa}, we introduce the auxiliary function
\begin{equation} \label{def-psi}
\Psi^\eps(t,x):= u_\eps(t,x(t)+\eps x) \, e^{-\frac \imath 
\eps \xi(t) \cdot (x(t) + \eps x)},\qquad x\in\R^N, \, t>0,
\end{equation}
which satisfies $\| \Psi^\eps(t,\cdot) \|_{L^2}^2 =m$ for all $t>0$. First of all we notice that
\begin{equation} \label{stima-iniz}
\left\| \Psi^\eps(0,\cdot) - R \right\|_{H^1}^2 \le (3 + 2|\xi_0|^2)\, \gamma,
\end{equation}
which follows from simple computations. For the gradient term
\begin{align*}
& \int_{\R^{N}} \big| \nabla  \Psi^\eps(0,x) - \nabla R(x) \big|^2  \\
\noalign{\vskip2pt}
&= \int_{\R^{N}} \big| (\eps \nabla  u_\eps(0,\eps x+x_0) 
-\imath u_\eps(0,\eps x+x_0) \xi_0) e^{-\frac \imath \eps \xi_0 \cdot (x_0 + \eps x)} - \nabla R(x) \big|^2 \\
\noalign{\vskip2pt}
&= \frac{1}{\eps^N} \int_{\R^{N}} \Big| (\eps \nabla v_\eps(y) -\imath v_\eps(y) \xi_0) 
e^{-\frac \imath \eps \, \xi_0\cdot y} - \eps \nabla \Big[R\Big( \frac{y-x_0}{\eps} \Big)\Big] \Big|^2 \\
\noalign{\vskip2pt}
&= \frac{1}{\eps^N} \int_{\R^{N}} \Big| \eps \nabla v_\eps(y) -\imath v_\eps(y) \xi_0 - \eps 
\nabla \Big( R\Big( \frac{y-x_0}{\eps} \Big) e^{\frac \imath \eps \, 
\xi_0\cdot y}\Big) + \imath R\Big( \frac{y-x_0}{\eps} \Big) \xi_0 e^{\frac \imath \eps \, \xi_0\cdot y}\Big|^2 \\
\noalign{\vskip2pt}
& \le \frac{2}{\eps^N} \int_{\R^{N}} \Big[ \eps^2 \Big| \nabla v_\eps(y) - 
\nabla \Big( R\Big( \frac{y-x_0}{\eps} \Big) 
e^{\frac \imath \eps \, \xi_0\cdot y}\Big) \Big|^2 + |\xi_0|^2 \, 
\Big| v_\eps(y) - R\Big( \frac{y-x_0}{\eps} \Big) 
e^{\frac \imath \eps \, \xi_0\cdot y} \Big|^2\Big] \\ 
\noalign{\vskip2pt}
&< 2 (1+|\xi_0|^2) \gamma,
\end{align*}
where in the last inequality we have used (C2). For the $L^2$ term again
\begin{equation*}
\int_{\R^{N}} \big| \Psi^\eps(0,x) - R(x) \big|^2  
= \frac{1}{\eps^N} \int_{\R^{N}} 
\big| v_\eps(y) - R\Big( \frac{y-x_0}{\eps} \Big)e^{\frac \imath \eps \, \xi_0\cdot y}\big|^2 < \gamma,
\end{equation*}
by virtue of (C2). By definition, it is natural to compute the energy $\EE$ 
defined in \eqref{energy-R} for $\Psi^{\eps}$. We can 
use \eqref{brezis-lieb} as in the proof of Lemma \ref{stima-en-in-1} to obtain
\begin{equation}\label{en-stima-iniz}
0 \le  \EE\left( \Psi^\eps(0,x) \right) - \EE(R) = O(\sqrt{\gamma}),
\end{equation}
where $O(\cdot)$ depends only on $R,x_0,\xi_0$, and we used the 
fact that $R$ is the point of minimum for $\EE$ on the manifold of functions with $L^2$ norm equal to $\sqrt{m}$. 
Moreover, we have the following

\begin{lemma}\label{as-in-k}
There exist $\gamma_{0}>0$ and a positive constant depending only on $R,x_{0}$ and $\xi_{0}$ such that  
\begin{align*}
 0\le \EE\left( \Psi^\eps(t,x) \right) - \EE(R) &\le   m  |\xi(t)|^2 - \xi(t) \cdot 
\int_{\R^{N}} p_\eps(t,x) + m V(x(t)) \\ 
& - \frac{1}{\eps^N} \int_{\R^{N}} V(x) |u_\eps(t,x)|^2   
+ \const(R,x_{0},\xi_{0})\, (\sqrt{\gamma} + \eps^{2}) \phi(\delta),\quad t>0,
\end{align*}
for every $\eps>0$ and $\gamma \in (0,\gamma_{0})$.
\end{lemma}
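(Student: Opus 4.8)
The plan is to compute the energy $\EE(\Psi^\eps(t,\cdot))$ essentially exactly and then read off the bound from Proposition~\ref{import}. The lower bound $\EE(\Psi^\eps(t,x))-\EE(R)\ge 0$ is immediate: since the modulating exponential has unit modulus and the rescaling $y=x(t)+\eps x$ preserves the $L^2$ norm up to the factor $\eps^{N}$, one has $\|\Psi^\eps(t,\cdot)\|_{L^2}^2=m$ for every $t>0$, so that $R$ being a constrained minimizer of $\EE$ on the sphere of radius $\sqrt m$ gives the claim exactly as in~\eqref{en-stima-iniz}.

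For the upper bound, the heart of the matter is an exact identity relating $\EE(\Psi^\eps)$ to the full energy $E_\eps$. First I would differentiate
\[
\nabla_x\Psi^\eps(t,x)=\big(\eps\nabla u_\eps-\imath\,\xi(t)\,u_\eps\big)\,e^{-\frac\imath\eps\,\xi(t)\cdot(x(t)+\eps x)},
\]
with all quantities on the right evaluated at $(t,x(t)+\eps x)$, so that $|\nabla_x\Psi^\eps|^2=\eps^2|\nabla u_\eps|^2+|\xi(t)|^2|u_\eps|^2-2\eps\,\xi(t)\cdot\Im(\nabla u_\eps\,\overline{u_\eps})$. Integrating in $x$ and changing variables $y=x(t)+\eps x$ turns the three pieces into the scaled Dirichlet energy $\frac1{\eps^{N-2}}\int|\nabla u_\eps|^2$, the mass term $m|\xi(t)|^2$ (by conservation of mass, (C4)), and, through the definition of $p_\eps$ in~\eqref{p-radiale}, the cross term $-2\,\xi(t)\cdot\int_{\R^N}p_\eps(t,x)$. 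The nonlinear term is invariant under the phase and rescales to $\frac1{(p+1)\eps^N}\int|u_\eps|^{2p+2}$.

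Collecting these contributions and comparing with~\eqref{energy-tot}, I expect to reach the identity
\[
\EE(\Psi^\eps(t,x))=E_\eps(u_\eps,t)-\frac1{\eps^N}\int_{\R^N}V(x)|u_\eps(t,x)|^2+\tfrac12 m|\xi(t)|^2-\xi(t)\cdot\int_{\R^N}p_\eps(t,x).
\]
Subtracting $\EE(R)$ and invoking Proposition~\ref{import}, which bounds $E_\eps(u_\eps,t)-\EE(R)$ from above by $m\,\H(x(t),\xi(t))+\const(R,x_0,\xi_0)(\sqrt\gamma+\eps^2)\phi(\delta)$, and finally expanding $\H(x(t),\xi(t))=\tfrac12|\xi(t)|^2+V(x(t))$ as in~\eqref{hamil-newton}, the two copies of $\tfrac12 m|\xi(t)|^2$ combine into $m|\xi(t)|^2$ while the term $mV(x(t))$ appears; this reproduces exactly the asserted right-hand side.

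The only delicate point is the bookkeeping of the cross term: one must verify that $\Im(\nabla u_\eps\,\overline{u_\eps})=\Im(\overline{u_\eps}\nabla u_\eps)$ and that the powers of $\eps$ collapse so that $\frac{2\eps}{\eps^N}\,\xi(t)\cdot\int\Im(\nabla u_\eps\,\overline{u_\eps})$ becomes precisely $2\,\xi(t)\cdot\int p_\eps$. Everything else—the rescaling of the Dirichlet and nonlinear terms and the appeal to conservation of mass—is routine, and, crucially, no new estimate on the singular potential is required at this stage, since all the potential-dependent error has already been absorbed into the constant $\const(R,x_0,\xi_0)(\sqrt\gamma+\eps^2)\phi(\delta)$ supplied by Proposition~\ref{import}.
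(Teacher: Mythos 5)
Your proposal is correct and follows essentially the same route as the paper: both establish the exact identity $\EE(\Psi^\eps(t,\cdot))=E_\eps(u_\eps,t)-\frac{1}{\eps^N}\int_{\R^N}V|u_\eps|^2+\frac12 m|\xi(t)|^2-\xi(t)\cdot\int_{\R^N}p_\eps(t,x)$ and then conclude by Proposition~\ref{import} together with $\H=\frac12|\xi|^2+V$. The only (immaterial) difference is that you expand $|\nabla\Psi^\eps|^2$ directly in Cartesian form, whereas the paper uses the polar decomposition $u_\eps=|u_\eps|e^{\imath S_\eps}$ and the splitting $E_\eps=J_\eps+K_\eps$; the two computations coincide since $\int|\nabla u_\eps|^2=\int\bigl|\nabla|u_\eps|\bigr|^2+\int|u_\eps|^2|\nabla S_\eps|^2$.
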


\begin{proof} 
	The left inequality follows from the properties of 
	$R$ and $\| \Psi^\eps(t,\cdot) \|_{L^2}^2 = m$, for every $t>0$.
Concerning the estimate from above, we use \eqref{u-radiale}-\eqref{p-radiale} to write
\begin{align*}
\EE\left( \Psi^\eps(t,x) \right) &= \frac 12 \int_{\R^{N}} 
|\nabla \Psi^\eps(t,x)|^{2} - \frac{1}{p+1} \int_{\R^{N}}\, |\Psi^\eps(t,x)|^{2p+2} \\
& = \frac 12 \int_{\R^{N}}\, \big|\nabla |u_{\eps}(t,x(t)+\eps x)| \big|^{2} 
+ \frac 12 \int_{\R^{N}}\, |u_{\eps}(t,x(t)+\eps x)|^{2}\, \big| \xi(t) - 
\nabla \left( S_{\eps}(t,x(t)+\eps x) \right) \big|^{2} \\
& - \frac{1}{p+1} \int_{\R^{N}}\, |u_{\eps}(t,x(t)+\eps x)|^{2p+2}  \\
&= J_{\eps}(u_{\eps},t) + K_{\eps}(u_{\eps},t) - \frac{1}{\eps^{N}} \int_{\R^{N}}\, V(x)\, |u_{\eps}(t,x)|^{2} \\ 
& + \frac12 \, m \, |\xi(t)|^{2} - \int_{\R^{N}}\, p_{\eps}(t,x) \cdot \xi(t),\quad t>0,
\end{align*}
where we have used the expressions \eqref{int-energy}-\eqref{kin-energy} for 
the internal and kinetic energy of $u_{\eps}$. Hence, we get
\begin{align*}
 \EE\left( \Psi^\eps(t,x) \right) - \EE(R) &= E_{\eps}(u_{\eps},t) - \EE(R) - m \H(x(t),\xi(t)) \\
&+ m |\xi(t)|^2 - \xi(t) \cdot \int_{\R^{N}} p_\eps(t,x) + m V(x(t)) - 
\frac{1}{\eps^N} \int_{\R^{N}} V(x) |u_\eps(t,x)|^2,\quad t>0.
\end{align*}
The assertion then follows from inequality~\eqref{forma-en} in Proposition~\ref{import}.
\end{proof}

\noindent
Let us now introduce, for any $t>0$, the terms
\begin{equation}\label{gli-eta}
\eta_1^\eps(t):= m \xi(t) - \int_{\R^{N}} p_\eps(t,x), \qquad \eta_2^\eps(t) := m V(x(t)) - \frac{1}{\eps^N} \int_{\R^{N}} V(x)\, |u_\eps(t,x)|^2.
\end{equation}
From Lemma \ref{as-in-k} we have
\begin{equation}\label{pro-wein}
0\le \EE\left( \Psi^\eps(t,x) \right) - \EE(R) 
\le |\xi(t)| |\eta_1^\eps(t)| + |\eta_2^\eps(t)| 
+ \const(R,x_{0},\xi_{0},\delta) (\sqrt{\gamma} + \eps^{2}),
\end{equation}
for every $\eps>0$ and $\gamma \in (0,\gamma_{0})$.
If we write, as in the proof of Lemma \ref{as-in-k},
$$
\EE\left( \Psi^\eps(t,x) \right) = \EE(|  \Psi^\eps(t,x) |) + 
\frac{1}{2 \eps^N} \int_{\R^{N}} |u_\eps(t,x)|^2  | \xi(t) - \eps \nabla S_\eps(t,x)|^2,
$$
from \eqref{en-stima-iniz} and Lemma \ref{as-in-k} we obtain
\begin{align} \label{stime-momento-1}
&\frac{1}{\eps^N} \int_{\R^{N}} |v_\eps(x)|^2  
| \xi_{0}- \eps \nabla S_\eps(0,x)|^2 = O(\sqrt{\gamma}), \\
\label{stime-momento-2}
& \frac{1}{\eps^N} \int_{\R^{N}} |u_\eps(t,x)|^2  | \xi(t) - \eps 
\nabla S_\eps(t,x)|^2  \le |\xi(t)| |\eta_1^\eps(t)| + 
|\eta_2^\eps(t)| + O(\sqrt{\gamma} + \eps^{2}), \quad
t>0,
\end{align}
since $\EE\left( \left|  \Psi^\eps(t,x) \right| \right)  - \EE(R)\ge 0$, for all $t\geq 0$.

\vskip6pt
\noindent
Let us now recall the well-known quantitative property which follows 
from M.\ Weinstein modulational stability theory \cite{wa1,wa2}.

\begin{proposition}
	\label{weinstein}
There exist two positive constants ${\mathcal C}$ and ${\mathcal A}$ such that
$$
\inf_{\substack{\xi\in\R^N \\ \theta\in [0,2\pi)}}\|\Psi-e^{\imath \theta}
R(\cdot-\xi)\|^2_{H^1}\leq {\mathcal C} ({\mathscr E}(\Psi)-{\mathscr E}(R)),
$$
for every $\Psi\in H^1(\R^N)$ such that $\|\Psi\|_{L^2}=\|R\|_{L^2}$ and 
${\mathscr E}(\Psi)-{\mathscr E}(R)<{\mathcal A}$.
\end{proposition}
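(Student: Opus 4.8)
\noindent
The plan is to reduce the inequality to the coercivity of the second variation of $\EE$ at $R$ on the mass constraint, in the spirit of Weinstein \cite{wa1,wa2}. First I would use that $R$ minimizes $\EE$ subject to $\|u\|_{L^2}=\sqrt m$, so by the Lagrange rule it solves \eqref{eq-R} with multiplier normalized to $1$. Given an admissible $\Psi$, I choose $(\theta,\xi)$ attaining the infimum on the left-hand side; the resulting optimality conditions force the perturbation $w:=e^{-\imath\theta}\Psi(\cdot+\xi)-R$ to be $L^2$-orthogonal to the generators of the invariance group, namely $\partial_{x_1}R,\dots,\partial_{x_N}R$ and $\imath R$. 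Writing $w=w_1+\imath w_2$ and Taylor expanding, the constraint $\|\Psi\|_{L^2}=\|R\|_{L^2}$ cancels the linear term and gives
\[
\EE(\Psi)-\EE(R)=\tfrac12\langle L_+w_1,w_1\rangle+\tfrac12\langle L_-w_2,w_2\rangle+o(\|w\|_{H^1}^2),
\]
with $L_+:=-\tfrac12\Delta+1-(2p+1)R^{2p}$ and $L_-:=-\tfrac12\Delta+1-R^{2p}$.

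The core is the coercivity of this form on the constrained orthogonal complement. I would invoke the spectral structure of the two scalar operators: $L_-$ is nonnegative with kernel spanned by $R$ (since $L_-R=0$ and $R>0$ is its ground state), so $\langle L_-w_2,w_2\rangle\gtrsim\|w_2\|_{H^1}^2$ once $w_2\perp R$; and $L_+$ has exactly one negative eigenvalue with kernel spanned by $\{\partial_{x_j}R\}$, the latter being precisely the nondegeneracy granted by \cite{kwong}. The delicate point is the component of $w_1$ along the negative direction of $L_+$, and this is exactly where the subcriticality $p<2/N$ enters: differentiating the rescaled soliton family $R_\lambda$ in the multiplier $\lambda$ yields $\|R_\lambda\|_{L^2}^2=\lambda^{1/p-N/2}\|R\|_{L^2}^2$, hence $\tfrac{d}{d\lambda}\|R_\lambda\|_{L^2}^2>0$ for $p<2/N$. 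Together with the linearized constraint $\langle R,w_1\rangle=O(\|w\|^2)$, this Vakhitov--Kolokolov computation forces $\langle L_+w_1,w_1\rangle\gtrsim\|w_1\|_{H^1}^2$ on the admissible subspace. Combining the two bounds produces a constant $\mathcal C$ and a radius $r>0$ for which the asserted inequality holds whenever $\|w\|_{H^1}<r$.

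It remains to upgrade this local estimate to the stated form, whose sole hypothesis is the smallness of the energy excess. Here I would argue by contradiction using the variational characterization of $R$: if $\EE(\Psi_n)-\EE(R)\to 0$ along a sequence on the $L^2$ sphere of radius $\sqrt m$, then $(\Psi_n)$ is a minimizing sequence and, by concentration--compactness, converges in $H^1$ up to translation and phase to $R$. Thus a sufficiently small $\mathcal A$ guarantees $\|w\|_{H^1}<r$, so the local coercivity applies and yields $\mathcal C$.

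The principal obstacle is the spectral coercivity step: controlling the single negative direction of $L_+$ against the mass constraint, which needs both the nondegeneracy of $R$ from \cite{kwong} and the sharp use of $p<2/N$. By comparison, the passage from local to global smallness is routine, resting on the concentration--compactness description of the ground state.
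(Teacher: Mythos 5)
The paper never proves this proposition at all: it is recalled as a ``well-known quantitative property'' of Weinstein's modulational stability theory and simply cited to \cite{wa1,wa2}, so the only meaningful benchmark is that cited argument, and your outline reconstructs it faithfully --- the modulated decomposition $w=e^{-\imath\theta}\Psi(\cdot+\xi)-R$, the second variation governed by $L_+$ and $L_-$, the spectral input (nonnegativity of $L_-$ with kernel spanned by $R$; single negative direction of $L_+$ and kernel spanned by the $\partial_{x_j}R$ via the nondegeneracy of \cite{kwong}), the Vakhitov--Kolokolov slope condition supplied by $p<2/N$, and the concentration--compactness upgrade in the spirit of \cite{cazlio} that converts smallness of $\EE(\Psi)-\EE(R)$ into $H^1$-proximity to the orbit so the local coercivity applies. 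The only imprecision worth flagging for a full write-up is that minimizing the $H^1$ distance yields orthogonality of $w$ to $\imath R$ and $\partial_{x_j}R$ in the $H^1$ inner product rather than the $L^2$ one you invoke; the $L^2$ conditions come instead from an $L^2$-minimizing (or implicit-function-theorem) choice of modulation, or by converting the $H^1$ conditions through the equation \eqref{eq-R} satisfied by $R$ --- a standard adjustment, not a gap.
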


\noindent
Let us now fix a time $T>0$, $\eps_{0}>0$ as in \eqref{eps-0} 
and $\gamma_{0}>0$ as in Lemma \ref{as-in-k}. Let us set
\begin{equation}\label{t-eps}
T^{\eps,\gamma} := \sup \set{ t \in [0,T]: 
\ |\xi(s)|\, |\eta_1^\eps(s)| + |\eta_2^\eps(s)| \le \mu,\quad \text{for all $s\in (0,t)$}}, 
\end{equation}
where $\mu>0$ is such that 
$$
\mu + \const(R,x_{0},\xi_{0},\delta) 
(\sqrt{\gamma} + \eps^{2})< {\mathcal A},  
\qquad \text{for all $\eps<\eps_{0}$ and $\gamma <\gamma_{0}$},
$$
where $\const(R,x_{0},\xi_{0},\delta)$ is as in~\eqref{pro-wein} and 
${\mathcal A}$ is as in Proposition~\ref{weinstein}, so that 
$\EE\left( \Psi^\eps(t,x) \right) - \EE(R) < {\mathcal A}$ by virtue of \eqref{pro-wein} 
for all $t\in [0,T^{\eps,\gamma})$. Then, in turn, Proposition \ref{weinstein} yields
functions $\varpi^{\eps}:[0,T^{\eps,\gamma})\to[0,2\pi)$
and $w^{\eps}:[0,T^{\eps,\gamma})\to\R^N$ such that
\begin{equation} 
	\label{direct-wein}
\big\|\Psi^{\eps}(t,x)-e^{\imath \varpi^{\eps}(t)}R(x+w^{\eps}(t))
\big\|^2_{H^1}\leq {\mathcal C}(|\xi(t)|\, |\eta_1^\eps(t)| + |\eta_2^\eps(t)| 
+ \const(R,x_{0},\xi_{0},\delta) (\sqrt{\gamma} + \eps^{2})),
\end{equation}
for all $t\in [0,T^{\eps,\gamma})$. Then, we get the following

\begin{lemma}\label{rappr}
There exist families of functions $\theta^{\eps}:[0,T^{\eps,\gamma})\to[0,2\pi)$ and $x^{\eps}: [0,T^{\eps,\gamma})\to\R^N$ such that
$$
\big\|u_\eps(t,x)-e^{\frac{\imath}{\eps}(\xi(t)\cdot x+\theta^{\eps}(t))}
R\Big(\frac{x-x^\eps(t)}{\eps}\Big)\big\|^2_{H^1_\eps}\leq {\mathcal C}(|\xi(t)|\, |\eta_1^\eps(t)| + |\eta_2^\eps(t)| + 
\const(R,x_{0},\xi_{0})\, (\sqrt{\gamma} + \eps^{2}))
$$
for all $t\in [0,T^{\eps,\gamma})$. 
\end{lemma}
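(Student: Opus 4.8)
The plan is to deduce the statement directly from the Weinstein estimate \eqref{direct-wein} by undoing the rescaling and the phase modulation that define $\Psi^\eps$ in \eqref{def-psi}, reading the $H^1$-bound for $\Psi^\eps$ as an $H^1_\eps$-bound for $u_\eps$. Inverting \eqref{def-psi} with the substitution $y=x(t)+\eps x$ gives
$$
u_\eps(t,y)=\Psi^\eps\Big(t,\tfrac{y-x(t)}{\eps}\Big)\, e^{\frac{\imath}{\eps}\xi(t)\cdot y}.
$$
I would then build the modulation parameters for $u_\eps$ out of the parameters $\varpi^{\eps}(t),w^{\eps}(t)$ provided by Proposition~\ref{weinstein}: set $x^\eps(t):=x(t)-\eps\, w^{\eps}(t)$ and $\theta^\eps(t):=\eps\, \varpi^{\eps}(t)$. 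Since $\varpi^{\eps}(t)\in[0,2\pi)$, for $\eps\le 1$ the number $\theta^\eps(t)$ lies in $[0,2\pi)$ as required, one has $e^{\frac{\imath}{\eps}\theta^\eps(t)}=e^{\imath\varpi^{\eps}(t)}$, and $R\big(\tfrac{y-x(t)}{\eps}+w^{\eps}(t)\big)=R\big(\tfrac{y-x^\eps(t)}{\eps}\big)$. With these choices the $H^1$-approximant $e^{\imath\varpi^{\eps}(t)}R(\cdot+w^{\eps}(t))$ of $\Psi^\eps$ is mapped exactly onto the desired approximant $e^{\frac{\imath}{\eps}(\xi(t)\cdot y+\theta^\eps(t))}R\big(\tfrac{y-x^\eps(t)}{\eps}\big)$ of $u_\eps$.

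Writing $g(x):=\Psi^\eps(t,x)-e^{\imath\varpi^{\eps}(t)}R(x+w^{\eps}(t))$, the two observations above combine to give
$$
u_\eps(t,y)-e^{\frac{\imath}{\eps}(\xi(t)\cdot y+\theta^\eps(t))}R\Big(\tfrac{y-x^\eps(t)}{\eps}\Big)=e^{\frac{\imath}{\eps}\xi(t)\cdot y}\, g\Big(\tfrac{y-x(t)}{\eps}\Big).
$$
The core of the argument is the change-of-variables computation of the $H^1_\eps$-norm of this difference according to \eqref{h1-eps}. Using $x=(y-x(t))/\eps$ and $dy=\eps^{N}dx$, the $L^2$-piece $\eps^{-N}\int|\cdot|^2$ reproduces exactly $\int_{\R^N}|g|^2$, while the gradient piece is slightly more delicate: differentiating in $y$ yields both $\tfrac{1}{\eps}(\nabla g)\big(\tfrac{y-x(t)}{\eps}\big)$ and, from the modulating phase $e^{\frac{\imath}{\eps}\xi(t)\cdot y}$, the extra term $\tfrac{\imath}{\eps}\xi(t)\,g\big(\tfrac{y-x(t)}{\eps}\big)$, so that after multiplying by $\eps^{-(N-2)}$ and rescaling the gradient piece becomes $\int_{\R^N}|\nabla g+\imath\xi(t)g|^2$. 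Hence the squared $H^1_\eps$-norm of the difference equals $\int_{\R^N}|\nabla g+\imath\xi(t)g|^2+\int_{\R^N}|g|^2$.

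The only point requiring care is that this is not literally $\|g\|_{H^1}^2$, because of the drift term $\imath\xi(t)g$. I would control it by Cauchy--Schwarz, $\int|\nabla g+\imath\xi(t)g|^2\le 2\int|\nabla g|^2+2|\xi(t)|^2\int|g|^2$, obtaining the clean inequality $\|\cdot\|_{H^1_\eps}^2\le (3+2|\xi(t)|^2)\,\|g\|_{H^1}^2$; the factor $|\xi(t)|^2$ is harmless since, by \eqref{deltabblow}, $\sup_t|\xi(t)|^2<|\xi_0|^2+2V(x_0)$, so the prefactor is bounded by a constant $\const(x_0,\xi_0)$ uniform in $t$ and $\eps$. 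Inserting the Weinstein estimate \eqref{direct-wein} for $\|g\|_{H^1}^2$ and absorbing this prefactor (together with ${\mathcal C}$) into the constants then yields the assertion for all $t\in[0,T^{\eps,\gamma})$. The main, and rather mild, obstacle is to keep track precisely of the powers of $\eps$ in the rescaling so that the weighted norm \eqref{h1-eps} converts the $H^1$-bound for $\Psi^\eps$ into an $\eps$-independent $H^1_\eps$-bound for $u_\eps$, and to verify that the drift term $\imath\xi(t)g$ is controlled uniformly via the a priori bound on $|\xi(t)|$.
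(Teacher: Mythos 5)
Your proposal is correct and is essentially the paper's own proof: the paper makes the identical choices $\theta^\eps(t):=\eps\varpi^\eps(t)$ and $x^\eps(t):=x(t)-\eps w^\eps(t)$ and asserts that the bound follows from \eqref{direct-wein} by the definition of $\Psi^\eps$. You additionally spell out the rescaling identity for the $H^1_\eps$-norm and control the phase-drift term $\imath\xi(t)g$ via the uniform bound on $|\xi(t)|$ from \eqref{deltabblow} — a detail the paper leaves implicit — at the harmless cost of replacing ${\mathcal C}$ by $(3+2|\xi(t)|^2){\mathcal C}\le \const(x_0,\xi_0)\,{\mathcal C}$, which is absorbed as in the paper's conventions.
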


\begin{proof}
In light of inequality \eqref{direct-wein}, defining the functions $\theta^{\eps}:[0,T^{\eps,\gamma})\to[0,2\pi)$
and $x^{\eps}:[0,T^{\eps,\gamma})\to\R^N$ by setting $\theta^\eps(t):=\eps \varpi^\eps(t)$ 
and $x^\eps(t):=x(t)-\eps w^\eps(t)$ for every $[0,T^{\eps,\gamma})$
respectively, the assertion follows by the definition of $\Psi^\eps$.
\end{proof}

\noindent
We now consider the behavior of the difference $|x^{\eps}(t) - x(t)|$. This can be done as in \cite{keraa}, since the proofs do not depend on the properties of the potential $V$. Let $\chi$ denote the cuff-off function which is defined in \cite[p.179]{keraa}. Then we can get

\begin{lemma}
       \label{w-come-in-k}
For every $t\in [0, T^{\eps,\gamma})$ we have
\begin{equation}
	\label{laprimacosa}
\eps |w^{\eps}(t)|=|x^{\eps}(t) - x(t)|\le \const(R,x_{0},\xi_{0},\delta) (|\eta_{1}^{\eps}(t)| + |\eta_{2}^{\eps}(t)|+ |\eta_{3}^{\eps}(t)| 
+\sqrt{\gamma} + \eps^{2}),  
\end{equation}
where $\eta_3^\eps(t)$ is defined as 
$
\eta_{3}^{\eps}(t):=\frac{1}{\eps^N}\int_{\R^N} x\chi(x)|u_\eps(t,x)|^2- m x(t)
$
and it satisfies
\begin{equation}
	\label{lasecondacosa}
\eta_{3}^{\eps}(0) \leq\const(R,x_{0},\xi_{0},\delta)\eps^2, \quad \Big|\frac{d}{dt}\eta_3^\eps(t) \Big| \le 
 \const(R,x_{0},\xi_{0},\delta) (|\eta_{1}^{\eps}(t)| + |\eta_{2}^{\eps}(t)|+ |\eta_{3}^{\eps}(t)| 
+\sqrt{\gamma} + \eps^{2}).
\end{equation}
\end{lemma}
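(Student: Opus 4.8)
The plan is as follows. The claimed equality $\eps|w^\eps(t)| = |x^\eps(t)-x(t)|$ is immediate from the definition $x^\eps(t) := x(t) - \eps w^\eps(t)$ adopted in Lemma~\ref{rappr}; thus only the two inequalities require work, and both are obtained exactly as in \cite{keraa}, since they rest solely on the conservation laws \eqref{utile-1}--\eqref{utile-2}, the Newtonian system \eqref{newton} and the modulational stability encoded in Lemma~\ref{rappr}, none of which sees the singular structure of $V$.

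For \eqref{laprimacosa} I would compare the truncated first moment of $|u_\eps(t,\cdot)|^2$ with that of the modulated ground state. Passing to the variable $y=(x-x(t))/\eps$ and recalling $|\Psi^\eps(t,y)|^2 = |u_\eps(t,x(t)+\eps y)|^2$, one rewrites $\eta_3^\eps(t) + m\,x(t)$ as $\int(x(t)+\eps y)\,\chi(x(t)+\eps y)\,|\Psi^\eps(t,y)|^2\,dy$. On the concentration region $\chi \equiv 1$, so by $\|\Psi^\eps(t)\|_{L^2}^2 = m$ this reduces, up to tail contributions, to $m\,x(t) + \eps\int y\,|\Psi^\eps(t,y)|^2\,dy$. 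Since $R$ is radial its first moment vanishes, whence $\int y\,|R(y+w^\eps(t))|^2\,dy = -m\,w^\eps(t)$; replacing $\Psi^\eps(t)$ by $e^{\imath\varpi^\eps(t)}R(\cdot+w^\eps(t))$ costs an $H^1$-error controlled by \eqref{direct-wein}. Collecting terms gives $\eta_3^\eps(t) = -\eps m\,w^\eps(t) + (\text{error})$, and bounding the error by $|\xi(t)|\,|\eta_1^\eps(t)|+|\eta_2^\eps(t)|+\const(\sqrt\gamma+\eps^2)$, with $|\xi(t)|$ bounded via \eqref{deltabblow}, yields \eqref{laprimacosa}.

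The initial bound is the easiest point. At $t=0$ one has $u_\eps(0,\cdot) = v_\eps$, supported in $B(x_0,\rho)$ with $\rho<|x_0|-\delta$, a region on which $\chi\equiv 1$, while $x(0)=x_0$. Hence, using the normalization (C4), $\eta_3^\eps(0) = \frac{1}{\eps^N}\int x\,|v_\eps(x)|^2\,dx - m\,x_0 = \frac{1}{\eps^N}\int(x-x_0)\,|v_\eps(x)|^2\,dx$. Since $v_\eps$ is radially symmetric about $x_0$ by (C1), the last integrand is odd about $x_0$ and the integral in fact vanishes; the general Keraani estimate only guarantees $O(\eps^2)$, which is all that \eqref{lasecondacosa} requires, so the bound holds a fortiori.

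For the derivative I would differentiate $\eta_3^\eps(t)$, substitute the continuity equation \eqref{utile-1}, and integrate by parts against the fixed field $x\chi(x)$, using $\dot x(t)=\xi(t)$ from \eqref{newton}:
$$
\frac{d}{dt}\eta_3^\eps(t) = \int_{\R^N}\chi(x)\,p_\eps(t,x)\,dx + \int_{\R^N} x\,(\nabla\chi(x)\cdot p_\eps(t,x))\,dx - m\,\xi(t).
$$
Writing the first integral as $\int p_\eps - \int(1-\chi)p_\eps$ and recalling $\int p_\eps = m\,\xi(t) - \eta_1^\eps(t)$, the leading terms $m\,\xi(t)$ cancel and one is left with $-\eta_1^\eps(t)$ plus two tail contributions supported where $\chi\neq 1$. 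The main obstacle is precisely to estimate these tail terms: on $\{\chi\neq1\}\cup\operatorname{supp}\nabla\chi$, a region bounded away from the concentration point $x(t)$, one controls $\frac{1}{\eps^N}\int|u_\eps|^2$ and $\int|p_\eps|$ by combining the kinetic-energy bound \eqref{stima-kin}, the pointwise inequality $|p_\eps|\le \frac{1}{\eps^{N-1}}|u_\eps|\,|\nabla u_\eps|$, and the concentration of $u_\eps$ near $x(t)$ from Lemma~\ref{rappr} together with the exponential decay \eqref{prop-R} of $R$; this is the step that forces Keraani's careful choice of $\chi$ and produces the dependence on $|\eta_2^\eps(t)|,|\eta_3^\eps(t)|,\sqrt\gamma,\eps^2$ in \eqref{lasecondacosa}. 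Since no property of $V$ enters any of these bounds, I would invoke \cite{keraa} for the remaining routine estimates.
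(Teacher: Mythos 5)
Your overall architecture is the right one, and it is essentially Keraani's --- which is all the paper itself does here: its proof of this lemma is a pure citation of \cite{keraa} (Lemmas 3.4--3.6), noting that those arguments never use properties of $V$. Within that scheme, your identification $\eps|w^\eps(t)|=|x^\eps(t)-x(t)|$, your observation that $\eta_3^\eps(0)=0$ exactly (by (C1), (C3), (C4), once $\chi\equiv 1$ on ${\rm supp}\,v_\eps$), and your computation of $\frac{d}{dt}\eta_3^\eps$ via \eqref{utile-1}, integration by parts against $x\chi$, and the cancellation $\int\chi\,p_\eps=m\xi(t)-\eta_1^\eps(t)-\int(1-\chi)p_\eps$ are all sound; in the tail terms the contributions linear in the error carry exponentially small factors coming from the decay \eqref{prop-R} of $R$, so linear-in-$\eta$ bounds do come out there.

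The genuine gap is in your proof of \eqref{laprimacosa}, at the sentence ``replacing $\Psi^\eps(t)$ by $e^{\imath\varpi^\eps(t)}R(\cdot+w^\eps(t))$ costs an $H^1$-error controlled by \eqref{direct-wein}.'' Set $h:=\Psi^\eps(t)-e^{\imath\varpi^\eps(t)}R(\cdot+w^\eps(t))$ and $F(y):=(x(t)+\eps y)\chi(x(t)+\eps y)$. Then
\[
\int F\big(|\Psi^\eps|^2-|R(\cdot+w^\eps)|^2\big)
=2\Re\int F\,e^{-\imath\varpi^\eps}R(\cdot+w^\eps)\,\bar h+\int F\,|h|^2 .
\]
The quadratic term is harmless, but the cross term is \emph{linear} in $h$, while \eqref{direct-wein} bounds $\|h\|_{H^1}^2$ --- not $\|h\|_{H^1}$ --- by $|\xi(t)||\eta_1^\eps(t)|+|\eta_2^\eps(t)|+\const(\sqrt{\gamma}+\eps^2)$. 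Cauchy--Schwarz therefore only yields the \emph{square root} of that quantity, i.e.\ $\eps|w^\eps(t)|\le C\big(|\eta_3^\eps(t)|+(|\eta_1^\eps(t)|+|\eta_2^\eps(t)|+\sqrt{\gamma}+\eps^2)^{1/2}\big)$, which is strictly weaker than \eqref{laprimacosa} and would destroy the Gronwall argument of Section \ref{finale} (one would get $|\dot\eta|\lesssim\sqrt{\eta}+\dots$ and lose the $O(\eps^2)$ conclusion). The missing idea, which is the real content of \cite{keraa}, Lemmas 3.4--3.5, is a cancellation: since $\|\Psi^\eps(t)\|_{L^2}^2=\|R\|_{L^2}^2=m$, one has $2\Re\int e^{-\imath\varpi^\eps}R(\cdot+w^\eps)\bar h=-\|h\|_{L^2}^2$; splitting $F=F(-w^\eps)+(F-F(-w^\eps))$, the constant part of the test function then contributes quadratically in $h$, while the oscillating part obeys $|F(y)-F(-w^\eps)|\le C\eps\,|y+w^\eps|$ (the test function varies on scale $1$, the profile on scale $\eps$), so that piece of the cross term is at most $C\eps\,\|\,|z|R(z)\|_{L^2}\|h\|_{L^2}\le C(\eps^2+\|h\|_{L^2}^2)$ by Young's inequality. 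Only after this does the error become linear in the $\eta_i$'s. A secondary gap of the same nature: your step ``on the concentration region $\chi\equiv1$'' presupposes that $x^\eps(t)=x(t)-\eps w^\eps(t)$ stays well inside $\{\chi=1\}$, i.e.\ that $\eps|w^\eps(t)|$ is already small --- which is what the lemma is proving. At this stage $T^{\eps,\gamma}$ is still the one of \eqref{t-eps} (the constraint $|w^\eps|\le1$ is imposed only afterwards, in \eqref{t-eps-new}), so this must be obtained by the continuity/bootstrap argument of \cite{keraa}, Lemma 3.5, comparing $\delta_{w^\eps(t_2)}$ and $\delta_{w^\eps(t_1)}$ in the dual norm; this is exactly why the paper's proof remarks that the time $T$ has to be ``chosen properly.''
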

\begin{proof}
The proof of \eqref{laprimacosa} follows by just mimicking step by step the proof of 
\cite[Lemma 3.5]{keraa}, which is based on the arguments of \cite[Lemma 3.4]{keraa} 
in view of our inequalities \eqref{pro-wein}-\eqref{direct-wein}. 
Notice also that in this proof one needs to choose the time $T$ properly, 
but depending only on $x_{0}, \xi_{0}, \eps_{0}, \gamma_{0}$ and ${\mathcal A}$. 
This is analogous to \cite[Lemma 3.4]{keraa}.
Instead, concerning properties \eqref{lasecondacosa} it is sufficient to argue as in \cite[Lemma 3.6]{keraa}.
\end{proof}

\noindent
We now redefine the time $T^{\eps,\gamma}$ by also imposing $w^{\eps}$ to be bounded. Namely
\begin{equation}\label{t-eps-new}
T^{\eps,\gamma} := \sup \set{ t \in [0,T]: 
\ |\xi(s)|\, |\eta_1^\eps(s)| + |\eta_2^\eps(s)| \le \mu, \, \text{and }\, |w^{\eps}(s)|\le 1, 
\quad \text{for all $s\in (0,t)$}}
\end{equation}

\noindent
The last ingredients for the proof of the main result are estimates for the behavior of the quantities $\eta_1^\eps$ and 
$\eta_2^\eps$ defined in \eqref{gli-eta} in the interval $[0,T^{\eps,\gamma})$. It follows that these quantities have time derivatives bounded by
\begin{equation}\label{eta-tutti}
|\eta^{\eps}(t)| := |\eta^{\eps}_{1}(t)| + |\eta^{\eps}_{2}(t)| + |\eta^{\eps}_{3}(t)|
\end{equation}
up to an error depending on the kinetic energy $K_{\eps}(u_{\eps},t)$ and on terms of the order $\sqrt{\gamma} + \eps^{2}$.

\begin{lemma}
	\label{eta-iniz} 
There exists positive constants only depending on $R$, $x_{0}$ and $\xi_{0}$ such that
$$
|\eta_1^\eps(0)| \le \const(R,x_{0},\xi_{0}) \gamma^\frac 14, 
\qquad |\eta_2^\eps(0)| \le \const(R,x_{0},\xi_{0}) (\gamma + \eps^2).
$$
\end{lemma}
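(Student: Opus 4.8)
The plan is to treat the two quantities separately, since $\eta_2^\eps(0)$ is already controlled by the earlier estimate on the potential energy, whereas $\eta_1^\eps(0)$ requires the momentum estimate \eqref{stime-momento-1}.

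First I would dispose of $\eta_2^\eps(0)$. Evaluating the definition \eqref{gli-eta} at $t=0$, where $x(0)=x_0$ and $u_\eps(0,\cdot)=v_\eps$, gives
$$
\eta_2^\eps(0) = m V(x_0) - \frac{1}{\eps^N}\int_{\R^{N}} V(x)\,|v_\eps(x)|^2,
$$
which is exactly the quantity controlled in Lemma~\ref{stima-en-in-2}. Hence $|\eta_2^\eps(0)|\le \const(R,x_0,\xi_0)\,(\gamma+\eps^2)\,\phi(\delta)$; since $\delta=\delta(x_0,\xi_0)$, the factor $\phi(\delta)$ is absorbed into the constant and the claimed bound on $\eta_2^\eps(0)$ follows.

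For $\eta_1^\eps(0)$ I would pass to polar form. Using the radial notation \eqref{u-radiale}--\eqref{p-radiale} at $t=0$ one has $p_\eps(0,x)=\eps^{-(N-1)}|v_\eps(x)|^2\,\nabla S_\eps(0,x)$, while the mass normalization (C4) gives $m\xi_0 = \eps^{-N}\xi_0\int_{\R^N}|v_\eps|^2$. Factoring out $\eps^{-N}$ and writing $\eps^{-(N-1)}=\eps\cdot\eps^{-N}$ produces the clean identity
$$
\eta_1^\eps(0) = \frac{1}{\eps^N}\int_{\R^{N}} |v_\eps(x)|^2\,\big(\xi_0 - \eps\,\nabla S_\eps(0,x)\big).
$$
I would then apply the Cauchy--Schwarz inequality with respect to the measure $\eps^{-N}|v_\eps|^2\,dx$, splitting $|v_\eps|^2 = |v_\eps|\cdot|v_\eps|$, to bound $|\eta_1^\eps(0)|$ by the product of $\big(\eps^{-N}\int|v_\eps|^2\big)^{1/2}=\sqrt m$ and $\big(\eps^{-N}\int|v_\eps|^2\,|\xi_0-\eps\nabla S_\eps(0,x)|^2\big)^{1/2}$. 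The second factor is precisely the square root of the quantity shown to be $O(\sqrt\gamma)$ in \eqref{stime-momento-1}, hence it is $O(\gamma^{1/4})$, and the bound $|\eta_1^\eps(0)|\le\const(R,x_0,\xi_0)\,\gamma^{1/4}$ follows.

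The only mildly delicate point is the degraded exponent for $\eta_1^\eps(0)$: the rate $\gamma^{1/4}$, rather than $\gamma^{1/2}$, is forced by the Cauchy--Schwarz step, which converts the $L^2$-type control of $\xi_0-\eps\nabla S_\eps(0,\cdot)$ provided by \eqref{stime-momento-1} into $L^1$-type control of the momentum defect. Everything else is a direct substitution of the initial data into \eqref{gli-eta} followed by invocation of results already established.
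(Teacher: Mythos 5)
Your proof is correct, and it rests on the same two pillars as the paper's: the momentum estimate \eqref{stime-momento-1} combined with Cauchy--Schwarz (which is exactly where the degraded rate $\gamma^{1/4}$ comes from in the paper as well), and Lemma~\ref{stima-en-in-2} for $\eta_2^\eps(0)$ with $\phi(\delta)$ absorbed into the constant. The one genuine difference is in the algebra for $\eta_1^\eps(0)$: the paper represents $m\xi_0$ through $\|R\|_{L^2}^2$, writing
$$
m\xi_0=\frac{1}{\eps^{N}}\int_{\R^{N}} R^2\Big(\frac{x-x_0}{\eps}\Big)\,\xi_0 ,
$$
and consequently has to split off and estimate an extra term, namely
$|\xi_0|\,\eps^{-N}\int \big| R^2\big(\tfrac{x-x_0}{\eps}\big)-|v_\eps|^2 \big| \le 2|\xi_0|\sqrt{m\gamma}$,
via the inequality $\int \big||a|^2-|b|^2\big|\le (\int(|a|+|b|)^2)^{1/2}(\int|a-b|^2)^{1/2}$ and (C2). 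Your choice of representing $m\xi_0$ through (C4) as $\eps^{-N}\xi_0\int|v_\eps|^2$ makes that term never appear: the decomposition collapses to the single identity
$$
\eta_1^\eps(0)=\frac{1}{\eps^N}\int_{\R^{N}}|v_\eps(x)|^2\big(\xi_0-\eps\nabla S_\eps(0,x)\big),
$$
which is cleaner (in fact, the paper's extra term, taken without absolute values, vanishes identically by (C4), so its estimate there is harmless but superfluous). Both routes land on the same bound $\sqrt{m}\cdot O(\gamma^{1/4})$; yours is slightly shorter and avoids invoking (C2) a second time.
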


\begin{proof} 
Let us recall the radial notation~\eqref{p-radiale} 
for the momentum density. Then, we write
\begin{align*}	
|\eta_1^\eps(0)| & = \Big| m\xi_0 - \int_{\R^{N}} p_\eps(0,x) \Big| = \Big| \frac{1}{\eps^{N-1}} \int_{\R^{N}} 
R^2\Big( \frac{x-x_0}{\eps} \Big) \frac{\xi_0}{\eps} - \frac{1}{\eps^{N-1}} \int_{\R^{N}} |v_\eps(x)|^2 \nabla S_\eps(0,x) \Big|  \\
&= \Big| \frac{1}{\eps^{N-1}} \int_{\R^{N}} \frac{\xi_0}{\eps} 
\Big( R^2\Big( \frac{x-x_0}{\eps} \Big) - 
|v_\eps(x)|^2 \Big) + 
\frac{1}{\eps^N} \int_{\R^{N}} |v_\eps(x)|^2 \big( \xi_0 - \eps \nabla S_\eps(0,x) \big) \Big| \\
& \le |\xi_0|\, \frac{1}{\eps^N} \int_{\R^{N}} 
\Big| R^2\Big( \frac{x-x_0}{\eps} \Big) - |v_\eps(x)|^2 \Big| \\
& +  \Big( \frac{1}{\eps^N} \int_{\R^{N}} |v_\eps(x)|^2  \Big)^{\frac 12} 
\Big( \frac{1}{\eps^N} \int_{\R^{N}} |v_\eps(x)|^2 
\big| \xi_0 - \eps \nabla S_\eps(0,x) \big|^2 \Big)^{\frac 12} \\
& \le 2 |\xi_0| \sqrt{m \gamma} + \const(R,x_{0},\xi_{0}) 
\sqrt{m} \gamma^{\frac{1}{4}} \le  \const(R,x_{0},\xi_{0})\, \gamma^{\frac 14},
\end{align*}
where in the last line we have used the inequality for all $a,b\in \C$
$$
\int \big| |a|^{2}-|b|^{2} \big| \le 
\Big(\int (|a|+|b|)^{2} \Big)^{\frac 12} 
\Big(\int |a-b|^{2} \Big)^{\frac 12},
$$
condition (C2) on $v_{\eps}(x)$ and the estimate \eqref{stime-momento-1}.
The term $\eta_{2}^{\eps}(0)$ is estimated in Lemma \ref{stima-en-in-2}.
\end{proof}

\noindent
Let us now consider the increase rate in time. We can state the following

\begin{proposition}\label{viene-1}
For every $t\in [0,T^{\eps,\gamma})$, we have
$$
\Big|\frac{d}{dt}\eta_1^\eps(t) \Big| 
\le \const(V,R,x_{0},\xi_{0},v_{\eps})
\Big( |\eta^\eps(t)| + \sqrt{\gamma} 
+ \eps^2 + T\, \frac{(K_{\eps}(u_{\eps},t)-
mV_{0})^{\frac 14}}{\eps^{\frac 12 + 3 \frac{2+\beta}{1-\beta}}}  \Big),
$$
for every $\eps$ small enough.
\end{proposition}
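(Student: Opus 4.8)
The plan is to differentiate $\eta_1^\eps$ explicitly, reduce the estimate to a weighted first moment of the mass density $\eps^{-N}|u_\eps|^2$, and then isolate the part of that density which sits near the singularity, where the smooth-potential arguments of \cite{bronski,keraa} no longer apply. Since $(x(t),\xi(t))$ solves \eqref{newton} we have $\dot\xi(t)=-\nabla V(x(t))$, while the momentum law \eqref{utile-2} gives $\frac{d}{dt}\int_{\R^N}p_\eps=-\frac{1}{\eps^N}\int_{\R^N}|u_\eps|^2\nabla V$. Using the conservation of mass $\frac{1}{\eps^N}\int|u_\eps|^2=m$ this yields the identity
\begin{equation*}
\frac{d}{dt}\eta_1^\eps(t)=\frac{1}{\eps^N}\int_{\R^N}\big(\nabla V(x)-\nabla V(x(t))\big)\,|u_\eps(t,x)|^2\,dx .
\end{equation*}
In \cite{keraa} the integrand is controlled by $\|\nabla^2V\|_\infty\,|x-x(t)|$ and one simply invokes the concentration of the mass at $x(t)$; here $\nabla V$ is unbounded, so the integral has to be split according to the distance from both $x(t)$ and the origin.

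Fix $r_0\in(0,\delta/2)$ so small that the asymptotics in (V1) hold on $B(0,r_0)$, and set $D_1:=B(x(t),\delta/2)$, $D_3:=B(0,r_0)$ and $D_2:=\R^N\setminus(D_1\cup D_3)$, which are pairwise disjoint because $|x(t)|\ge\delta$ by \eqref{deltabblow}. On the core $D_1$ the segment from $x$ to $x(t)$ stays in $\{|x|\ge\delta/2\}$, so $\nabla V$ is smooth there with derivatives bounded by $\phi(\delta/2)$; a second order Taylor expansion of $\nabla V$ about $x(t)$, the fact that the first moment of $|\Psi^\eps(t,\cdot)|^2$ is governed by $\eps|w^\eps(t)|=|x^\eps(t)-x(t)|$ through Lemma~\ref{w-come-in-k}, and the $H^1$-closeness \eqref{direct-wein} of $\Psi^\eps(t,\cdot)$ to a translate of $R$ (whose moments are finite by \eqref{prop-R}) bound the $D_1$-contribution by $\const\,(|\eta^\eps(t)|+\sqrt\gamma+\eps^2)$. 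On $D_2$ one has $|x|\ge r_0$, hence $|\nabla V(x)-\nabla V(x(t))|\le 2\phi(r_0)$, and the contribution is estimated by the tail mass $\frac{1}{\eps^N}\int_{D_2}|u_\eps|^2\le\int_{|y|\ge\delta/(2\eps)}|\Psi^\eps(t,y)|^2\,dy$, which by the exponential decay \eqref{prop-R} of $R$ and \eqref{direct-wein} is again $O(|\eta^\eps(t)|+\sqrt\gamma+\eps^2)$ up to an exponentially small remainder.

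The main obstacle is the singular region $D_3$. There the term $|\nabla V(x(t))|\,\frac{1}{\eps^N}\int_{D_3}|u_\eps|^2$ is harmless: $|\nabla V(x(t))|\le\phi(\delta)$, and by (C3) the datum $v_\eps$ is supported in $B(x_0,\rho)$, away from the origin, so $\frac{1}{\eps^N}\int_{D_3}|u_\eps(0,x)|^2=0$ and the growth of this mass is governed by the flux of $p_\eps$ across $\partial D_3$ integrated over $[0,t]\subset[0,T]$, which is the source of the factor $T$. The delicate quantity is $\frac{1}{\eps^N}\int_{D_3}|\nabla V(x)|\,|u_\eps|^2\lesssim\frac{1}{\eps^N}\int_{D_3}|x|^{-(\beta+1)}|u_\eps|^2$, where (V1) has been used. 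To absorb the blow-up $|x|^{-(\beta+1)}$ I would interpolate between two controlled weights: the potential one, $\frac{1}{\eps^N}\int_{D_3}|x|^{-\beta}|u_\eps|^2\lesssim\frac{1}{\eps^N}\int(V-V_0)|u_\eps|^2\le K_\eps(u_\eps,t)-mV_0$, which comes from the splitting \eqref{kin-energy} of the kinetic energy together with $V-V_0\gtrsim|x|^{-\beta}$ near the origin, and the Hardy one, $\frac{1}{\eps^N}\int|x|^{-2}|u_\eps|^2\lesssim\eps^{-2}\,\frac{1}{\eps^{N-2}}\int|\nabla u_\eps|^2\lesssim\eps^{-2}$, the last bound being the uniform $H^1_\eps$ estimate of Lemma~\ref{stime-h1}. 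Writing $|x|^{-(\beta+1)}$ as a power interpolant of $|x|^{-\beta}$ and $|x|^{-2}$, applying H\"older, and combining with a Cauchy--Schwarz estimate of the singular mass, one is led to a bound of the form $T\,(K_\eps(u_\eps,t)-mV_0)^{1/4}\,\eps^{-(1/2+3(2+\beta)/(1-\beta))}$.

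Thus everything away from the origin reduces to the smooth-potential computation of \cite{keraa}, and the only new content is the treatment of $D_3$. I expect the hard part to be the exponent bookkeeping in that step: the interplay between the singular exponent $\beta$, the $\eps^{-2}$ cost of the Hardy inequality, the truncation scale $\delta/\eps$, and the power of $K_\eps(u_\eps,t)-mV_0$ must be balanced so as to produce exactly the stated negative power of $\eps$ and the exponent $1/4$, all with constants uniform in $\eps$. This is precisely the subtle estimate announced before Theorem~\ref{main}.
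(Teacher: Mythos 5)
Your reduction of $\frac{d}{dt}\eta_1^\eps$ and your handling of the regions away from the singularity are essentially the paper's ingredients in a different packaging (the paper splits $|u_\eps|^2$ algebraically around the soliton profile via $|a|^2=(|a|-|b|)^2+|b|^2+2(|a|-|b|)|b|$ rather than splitting $\R^N$ into $D_1,D_2,D_3$, but both assemblies rest on \eqref{direct-wein}, Lemma~\ref{w-come-in-k} and \cite[Lemma 3.3]{keraa}). The genuine gap is in $D_3$, i.e.\ exactly in the step this proposition exists for. Your Hardy--potential interpolation is sound as far as it goes: with $\theta=\frac{1}{2-\beta}$, H\"older, \eqref{kin-energy}, Hardy's inequality (valid since $N\ge 3$) and Lemma~\ref{stime-h1} give
\begin{equation*}
\frac{1}{\eps^N}\int_{D_3}|x|^{-(\beta+1)}|u_\eps|^2
\le \Big(\frac{1}{\eps^N}\int_{\R^N}(V-V_0)|u_\eps|^2\Big)^{1-\theta}
\Big(\frac{1}{\eps^N}\int_{\R^N}\frac{|u_\eps|^2}{|x|^2}\Big)^{\theta}
\le \const\,\big(K_\eps(u_\eps,t)-mV_0\big)^{\frac{1-\beta}{2-\beta}}\,\eps^{-\frac{2}{2-\beta}}.
\end{equation*}
But this has a different shape from the inequality you must prove: no factor $T$, exponent $\frac{1-\beta}{2-\beta}$ instead of $\frac14$, and $\eps$-power $-\frac{2}{2-\beta}$ instead of $-(\frac12+3\frac{2+\beta}{1-\beta})$. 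Your closing claim that a Cauchy--Schwarz step and ``exponent bookkeeping'' then produce the stated form is not a routine check that anyone can carry out --- it is false by that route. In the paper, the $\frac14$ and the $T$ arise from a genuinely \emph{dynamic} mechanism: the weighted mass near the origin vanishes at $t=0$ by (C3), its time derivative is rewritten via \eqref{utile-1} as $\int p_\eps\cdot\nabla(|\nabla V|\psi_{\tilde\delta}\varphi_\eps)$, this is estimated by a four-factor H\"older inequality whose third factor $\big(\int|u_\eps|^2(V-V_0)\big)^{1/4}$ is the source of the power $\frac14$, and integration over $[0,t]\subset[0,T]$ is the source of $T$; the specific $\eps$-exponent then comes from the choice \eqref{scelta-raggi} of cut-off radii. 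A pointwise-in-time estimate cannot generate the factor $T$, and for $\beta>2/3$ (where $\frac{1-\beta}{2-\beta}<\frac14$) your bound does not even imply the stated one, since the ratio of the two right-hand sides blows up as $K_\eps(u_\eps,t)-mV_0\to 0$ with $\eps$ fixed.

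Two further points. Your treatment of the other $D_3$-term, $|\nabla V(x(t))|\,\eps^{-N}\int_{D_3}|u_\eps|^2$, is also incomplete: by Cauchy--Schwarz and conservation of mass the flux of $p_\eps$ is only $O(\sqrt{K_\eps})$ per unit time, so ``flux integrated over $[0,T]$'' gives $O(T)$, which is bounded but not small; to extract smallness one must weight the flux by $V-V_0$, which is precisely what the paper's estimates of $J_1,J_2,J_3$ do --- or one can skip the flux entirely, noting that (V1) gives $V-V_0\gtrsim |x|^{-\beta}\ge r_0^{-\beta}$ on $D_3$, whence $\eps^{-N}\int_{D_3}|u_\eps|^2\le \const\, r_0^{\beta}\,(K_\eps(u_\eps,t)-mV_0)$. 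That said, your interpolation idea has real merit: combined with this last observation it yields a clean, static substitute for Proposition~\ref{viene-1} whose singular terms are, under \eqref{cond-small}, of size at most $\eps^{\frac{32+2\beta}{2-\beta}}\le\eps^{16}$, amply sufficient for the Gronwall argument of Section~\ref{finale} and arguably simpler than the paper's dynamic argument. But that is a different proposition; as a proof of the stated one, the final step is missing and cannot be supplied by the means you indicate.
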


\begin{proof}
Let $\theta^{\eps}$ be the family of functions introduced in Lemma \ref{rappr}. 
Then, using \eqref{newton} and \eqref{utile-2}, we have
\begin{align*}
\Big| \frac{d}{dt} \eta_1^\eps(t)\Big| & 
= \Big| m \nabla V(x(t)) - \frac{1}{\eps^N} \int_{\R^{N}} \nabla V(x) \, |u_\eps(t,x)|^2 \Big| \\
& = \Big|  \frac{1}{\eps^N} \int_{\R^{N}} |u_\eps(t,x)|^2 
\big[ \nabla V(x(t)) - \nabla V(x) \big] \Big| \le \\
& \le \Big| \frac{1}{\eps^N} \int_{\R^N} \Big( |u_\eps(t,x)| - R\Big( \frac{x-x^\eps(t)}{\eps}\Big) \Big)^2 \big[ \nabla V(x(t)) - \nabla V(x) \big] \Big| + \\
& + \Big| \frac{1}{\eps^N} \int_{\R^N} R^2\Big( \frac{x-x^\eps(t)}{\eps}\Big) \big[ \nabla V(x(t)) - \nabla V(x) \big] \Big| + \\
& + \Big| \frac{2}{\eps^N} \int_{\R^N} \Big( |u_\eps(t,x)| - R\Big( \frac{x-x^\eps(t)}{\eps}\Big) \Big) \, R\Big( \frac{x-x^\eps(t)}{\eps}\Big)\, \big[ \nabla V(x(t)) - \nabla V(x) \big] \Big| = \\
& =: I_{1} + I_{2} +I_3
\end{align*}
where we used the elementary identity $|a|^{2}= (|a|-|b|)^{2} + |b|^{2} +2 (|a|-|b|) |b|$.

\noindent Let us estimate these terms, beginning with $I_1$. 

$$I_1 = \Big| \frac{1}{\eps^N} \int_{\R^N} \Big( |u_\eps(t,x)| - R\Big( \frac{x-x^\eps(t)}{\eps}\Big) \Big)^2 \big[ \nabla V(x(t)) - \nabla V(x) \big] \Big| \le $$
$$ \le \frac{1}{\eps^N} \int_{\R^N} \Big| u_\eps(t,x) - R\Big( \frac{x-x^\eps(t)}{\eps}\Big) e^{\frac \imath \eps \, (\xi(t) \cdot x + \theta^\eps(t))} \Big|^2 \big| \nabla V(x(t)) - \nabla V(x) \big|$$

\noindent Let 
$\tilde \delta=\delta(x_{0},\xi_{0})/2$ so that $\text{supp}\, v_\eps 
\cap B(0,\tilde \delta) = \emptyset$ by assumption (C3), and introduce 
a cut-off function $\psi_{\tilde \delta}\in C^{\infty}_{0}(\R^{N})$ such that
\begin{equation} \label{la-mia-psi}
\psi_{\tilde \delta}(x) =
\begin{cases} 1, & |x|\le \frac{\tilde \delta}{2}, \\ 
	0, & |x|\ge \tilde \delta, 
	\end{cases}
	\qquad\,\,\, 
|\nabla \psi_{\tilde \delta}(x)| \le \frac{4}{\tilde \delta}, 
\quad\,\, \text{for}\ \ \frac{\tilde \delta}{2} \le |x| \le \tilde \delta.
\end{equation}
Then we can write
\begin{align*}
I_{1} &\le  \frac{2}{\eps^N}  \int_{\R^{N}} 
\Big| u_\eps(t,x) - R\Big( \frac{x-x^\eps(t)}{\eps}\Big) e^{\frac{\imath}{\eps}(\xi(t)\cdot x
+\theta^{\eps}(t))} \Big|^{2} | \nabla V(x(t)) - \nabla V(x) | 
\psi_{\tilde \delta}(x)   \\
&+ \frac{2}{\eps^N}  \int_{\R^{N}} \Big| u_\eps(t,x) - 
R\Big( \frac{x-x^\eps(t)}{\eps}\Big) e^{\frac{\imath}{\eps}(\xi(t)
\cdot x+\theta^{\eps}(t))} \Big|^{2}\, 
| \nabla V(x(t)) - \nabla V(x) | (1-\psi_{\tilde \delta}(x))   \\
& \le \frac{2}{\eps^N} \int_{B(0,\tilde \delta)} \Big| u_\eps(t,x) - 
R\Big( \frac{x-x^\eps(t)}{\eps}\Big) e^{\frac{\imath}{\eps}(\xi(t)\cdot x
+\theta^{\eps}(t))} \Big|^{2} 
(|\nabla V(x(t))| + |\nabla V(x)|) \psi_{\tilde \delta}(x)  \\
& + \frac{4\phi(\tilde \delta/2)}{\eps^{N}} \int_{\R^{N}\setminus B(0,
\tilde \delta/2)} \Big| u_\eps(t,x) - 
R\Big( \frac{x-x^\eps(t)}{\eps}\Big) e^{\frac{\imath}{\eps}(
\xi(t)\cdot x+\theta^{\eps}(t))} \Big|^{2},
\end{align*}
where $\phi$ is defined in~\eqref{il-phi}. By Lemma 
\ref{rappr}, inequality 
$|a-b|^{2}\le 2|a|^{2} + 2|b|^{2}$ and 
$\inf_{t\geq 0} |x(t)| \ge \delta$, we write
\begin{align*}
I_{1} &\le \frac{4}{\eps^N} \int_{B(0,\tilde \delta)} \left| u_\eps(t,x) 
\right|^{2}\, |\nabla V(x)|\, \psi_{\tilde \delta}(x)  
+ \frac{4}{\eps^N} \int_{B(0,\tilde \delta)} 
\Big| R\Big( \frac{x-x^\eps(t)}{\eps}\Big) \Big|^{2}\, |\nabla V(x)| \\
&+ \frac{2}{\eps^N}\, (\phi(\delta) + 2 \phi(\tilde \delta/2))\, 
\int_{\R^{N}} \Big| u_\eps(t,x) - R\Big( \frac{x-x^\eps(t)}{\eps}\Big) 
e^{\frac{\imath}{\eps}(\xi(t)\cdot x+ \theta^{\eps}(t))} \Big|^{2} \\
& \le \frac{4}{\eps^N} \int_{B(0,\tilde \delta)} \left| u_\eps(t,x) 
\right|^{2}\, |\nabla V(x)|\, \psi_{\tilde \delta}(x)  + 
\frac{4}{\eps^N} \int_{B(0,\tilde \delta)} \Big| R\Big( 
\frac{x-x^\eps(t)}{\eps}\Big) \Big|^{2}\, |\nabla V(x)|  \\
&+ 6 \max\{\phi(\delta), \phi(\tilde \delta/2)\}
{\mathcal C}(|\xi(t)|\, |\eta_1^\eps(t)| + |\eta_2^\eps(t)| 
+ \const(R,x_{0},\xi_{0})( \sqrt{\gamma} + \eps^2)) \\
&\le \frac{4}{\eps^N} \int_{B(0,\tilde \delta)} 
\left| u_\eps(t,x) \right|^{2}\, |\nabla V(x)|
\psi_{\tilde \delta}(x)  + 4\, \| \nabla V\|_{L^{1}}  
\frac{1}{\eps^{N}}\, \| R^{2} \|_{L^{\infty}(\R^{N}\setminus B(0,
\frac{|x^\eps(t)|-\tilde \delta}{\eps}))}  \\
&+ 6\, \phi(\delta/4)\, {\mathcal C}(|\xi(t)|\, 
|\eta_1^\eps(t)| + |\eta_2^\eps(t)| + \const(R,x_{0},\xi_{0})( \sqrt{\gamma} + \eps^2),
\end{align*}
and by \eqref{prop-R} it holds for $\eps$ small enough
$$
\| R^{2} \|_{L^{\infty}(\R^{N}\setminus B(0,\frac{|x^\eps(t)|-
\tilde \delta}{\eps}))} \le \const\, \frac{\eps^{N-1}
\, e^{-\frac{\delta}{2\eps}}}{\delta^{N-1}},
$$
where, since $x^\eps(t)=x(t)-\eps w^\eps(t)$, $|w^{\eps}(t)|\leq 1$
for $t\in [0,T^{\eps,\gamma})$ as defined in \eqref{t-eps-new} and $|x(t)|\geq \delta$, for $\eps$ small
$$
\frac{|x^\eps(t)|-\tilde \delta}{\eps}\geq
\frac{|x(t)|-\eps-\tilde \delta}{\eps} \ge 
\frac{\delta-\tilde \delta-\eps}{\eps} \geq  \frac{\delta}{4\eps}.
$$ 
Hence we choose $\eps_{0}>0$ such that, for $\eps< \eps_0$ 
\begin{equation}\label{eps-0}
\frac{1}{\eps^{N}} 
\frac{\eps^{N-1}
e^{-\frac{\delta}{2\eps}}}{\delta^{N-1}} < \eps^{2}.
\end{equation}
We obtain then
\begin{equation}\label{stima-i1}
I_{1}\le \frac{4}{\eps^N} 
\int_{B(0,\tilde \delta)} \left| u_\eps(t,x) \right|^{2}\, |\nabla V(x)|\, \psi_{\tilde \delta}(x)  
+\const(V,R,x_{0},\xi_{0})\, {\mathcal C}(|\xi(t)|\, 
|\eta_1^\eps(t)| + |\eta_2^\eps(t)| + \sqrt{\gamma} + \eps^2).
\end{equation}
We conclude the proof by showing that, for every 
$t\in [0,T^{\eps,\gamma})$, there holds
\begin{equation}
\label{ex-lemma}
\frac{1}{\eps^N} \int_{B(0,\tilde \delta)} \left| u_\eps(t,x) \right|^{2}
|\nabla V(x)|\, \psi_{\tilde \delta}(x)  \le \const(V,x_{0},\xi_{0},v_{\eps}) 
\Big( \eps^{2} + T\, \frac{(K_{\eps}(u_{\eps},t)-mV_{0})^{\frac 14}}{\eps^{\frac 12 + 3 \frac{2+\beta}{1-\beta}}}\Big).
\end{equation}
Let us introduce another cut-off at the 
	origin, that is a function $\varphi_{\eps} 
	\in C^{\infty}_{0}(\R^{N}\setminus \set{0})$, satisfying
	\begin{equation}\label{la-mia-fi}
	\varphi_{\eps}(x) =
	\left\{ \begin{array}{ll} 0, & |x|\le r''_{\eps}, 
	\\[0.2cm] 1, & r'_{\eps} \le |x| \le 2\tilde \delta, \\[0.2cm] 0, & |x|\ge 3 \tilde \delta, 
	\end{array} \right. \qquad |\nabla \varphi_{\eps}(x)| 
	\le \frac{2}{r'_{\eps} - r''_{\eps}}, \quad \text{for}\ \ r''_{\eps}\le |x| \le r'_{\eps},
	\end{equation}
	with $r'_{\eps}$ and $r''_{\eps}$ to be chosen later, 
	see formulas \eqref{scelta-raggi}. By assumption (V1) and 
	inequality \eqref{gagl-nir} with the choice $q= 2^{*}$, 
	we apply H\"older inequality to obtain
	\begin{align*}
	& \frac{1}{\eps^N} \int_{B(0,\tilde \delta)} \left| u_\eps(t,x) \right|^{2}\, |\nabla V(x)|\, \psi_{\tilde \delta}(x) (1-\varphi_{\eps}(x)) 
	\le \frac{1}{\eps^N} \int_{B(0,r'_{\eps})} \left| u_\eps(t,x) \right|^{2}\, |\nabla V(x)|  \\
	&\le \const(N)\, \frac{1}{\eps^N} \|\nabla u_{\eps}(t,\cdot)\|_{L^{2}}^{2} 
	\left( \int_{0}^{r'_{\eps}}\, \frac{1}{r^{\frac N2(\beta +1)}}\, r^{N-1}\, 
	dr \right)^{\frac 2N} \le \const(N,\beta) \, M(x_{0},\xi_{0}, v_{\eps})\, \frac{(r'_{\eps})^{1-\beta}}{\eps^{2}},
\end{align*}
	where $M(x_{0},\xi_{0}, v_{\eps})$ is defined in Lemma \ref{stime-h1}. We can then write
	\begin{equation}\label{mcp}
	\frac{1}{\eps^N} \int_{B(0,\tilde \delta)} 
	\left| u_\eps(t,x) \right|^{2}\, |\nabla V(x)|\, \psi_{\tilde \delta}(x)  
	\leq \frac{1}{\eps^N} 
	\int_{B(0,\tilde \delta)} \left| u_\eps(t,x) \right|^{2}\, |\nabla V(x)|\, \psi_{\tilde \delta}(x) \varphi_{\eps}(x) + 
	\const\, \frac{(r'_{\eps})^{1-\beta}}{\eps^{2}},
	\end{equation}
	where the constant in the last term only depends on the initial conditions of \eqref{ivp}.
	Moreover, by definition of $\tilde \delta$ and by virtue of identity \eqref{utile-1}, we have
	$$
	\int_{B(0,\tilde \delta)} 
	\left| u_\eps(0,x) \right|^{2}\, |\nabla V(x)|\, 
	\psi_{\tilde \delta}(x) \varphi_{\eps}(x) = 0
	 $$	
Since $\psi_{\tilde \delta}(x) \varphi_{\eps}(x) 
	\in C^{\infty}_{0}(B(0,\tilde \delta)\setminus \set{0})$, there holds
	\begin{equation*}
	\frac{d}{dt} \Big( \int_{B(0,\tilde \delta)} 
	\frac{|u_\eps(t,x)|^{2}}{\eps^N}\, |\nabla V(x)|
	\, \psi_{\tilde \delta}(x) \varphi_{\eps}(x) \Big) 
	= \int_{B(0,\tilde \delta)} p_{\eps}(t,x) \cdot \nabla 
	\left( |\nabla V(x)|\, \psi_{\tilde \delta}(x) \varphi_{\eps}(x) \right),
\end{equation*}
	and to give an estimate for this last term we use the radial 
	notation \eqref{p-radiale} for the momentum 
	density and split the integral in three terms, where the 
	properties of the cut-off functions $\psi_{\tilde \delta}$, 
	see \eqref{la-mia-psi}, and $\varphi_{\eps}$, 
	see \eqref{la-mia-fi}, are used to determine the domain of integration. We obtain
	\begin{equation}\label{vanno-qui}
	\Big| \int_{B(0,\tilde \delta)} p_{\eps}(t,x) \cdot \nabla 
	\left( |\nabla V(x)|\, \psi_{\tilde \delta}(x) 
	\varphi_{\eps}(x) \right) \Big| \le J_{1} + J_{2} + J_{3},
	\end{equation}
	where we have set
	\begin{align*}
	J_{1} &:= \frac{1}{\eps^{N}} \int_{B(0,\tilde \delta) 
	\setminus B(0,r''_{\eps})}\, |u_{\eps}(t,x)|^{2} \eps 
	\, \left| \nabla S_{\eps}(t,x) \right|\,  | \nabla |\nabla V(x)| |, \\
	J_{2} &:= \frac{1}{\eps^{N}} \int_{B(0,\tilde \delta) 
	\setminus B(0,\tilde \delta/2)}\, 
	|u_{\eps}(t,x)|^{2} \eps \, \left| \nabla 
	S_{\eps}(t,x) \right|\,  |\nabla V(x)|\, |\nabla \psi_{\tilde \delta}(x) |, \\
	J_{3} &:= \frac{1}{\eps^{N}} \int_{B(0,r'_{\eps}) 
	\setminus B(0,r''_{\eps})}\, |u_{\eps}(t,x)|^{2} \eps  
	\left| \nabla S_{\eps}(t,x) \right|\,  |\nabla V(x)|\, |\nabla \varphi_{\eps}(x) | .
\end{align*}
	The estimates for the $J_i$s are 
	similar. We use H\"older inequality, 
	assumptions (V1)-(V3) and the estimate 
	\eqref{stima-kin} for the kinetic energy 
	$K_{\eps}(u_{\eps},t)$ defined in \eqref{kin-energy}. We obtain
	$$
	\eps^{N} J_{1}\le \|u_{\eps}(t,\cdot)\|_{L^{2^{*}}}^{\frac 12}
	\Big( \int |u_{\eps}(t,x)|^{2}\, \eps^{2} |\nabla S_{\eps}(t,x)|^{2} \Big)^{\frac 12}\, 
	\Big( \int |u_{\eps}(t,x)|^{2}\, (V(x)-V_{0}) \Big)^{\frac 14}  
	\Big( \int \frac{\big| \nabla |\nabla V(x)| \big|^{2N}}{(V(x)-V_{0})^{\frac N2}} \Big)^{\frac{1}{2N}},
	$$
	where all the integrals are computed on the set 
	$B(0,\tilde \delta) \setminus B(0,r''_{\eps})$. Moreover we have 
	$$
	\|u_{\eps}(t,\cdot)\|_{L^{2^{*}}}^{\frac 12} \le \const(N) 
	\| \nabla u_{\eps}(t,\cdot)\|_{L^{2}}^{\frac 12} \le \const(N,x_{0},\xi_{0},v_{\eps})\, \eps^{\frac{N-2}{4}},
	$$
	by the Gagliardo-Nirenberg inequality \eqref{gagl-nir} and Lemma \ref{stime-h1},
	$$
	\Big( \int_{B(0,\tilde \delta) \setminus B(0,r''_{\eps})} |u_{\eps}(t,x)|^{2}\, \eps^{2} 
	|\nabla S_{\eps}(t,x)|^{2} \Big)^{\frac 12} \le 
	\Big( \int_{\R^{N}} |u_{\eps}(t,x)|^{2}\, \eps^{2} 
	|\nabla S_{\eps}(t,x)|^{2} \Big)^{\frac 12} \le \big(2\eps^{N} K_{\eps}(u_{\eps},t) \big)^{\frac 12},
	$$
	by definition of $K_{\eps}(u_{\eps},t)$ and the non-negativity of $V$,
	$$
	\Big( \int_{B(0,\tilde \delta) \setminus B(0,r''_{\eps})} |u_{\eps}(t,x)|^{2}\, (V(x)-V_{0})  \Big)^{\frac 14} \le \left( \int_{\R^{N}} 
	|u_{\eps}(t,x)|^{2}\, (V(x)-V_{0}) \right)^{\frac 14} 
	\le \big( \eps^{N}\, (K_{\eps}(u_{\eps},t) - m\, V_{0}) \big)^{\frac 14},
	$$
	by definition of $K_{\eps}(u_{\eps},t)$ and the conservation of mass,
	$$
	\Big( \int_{B(0,\tilde \delta) \setminus B(0,r''_{\eps})} 
	\frac{| \nabla |\nabla V(x)| |^{2N}}{(V(x)-V_{0})^{\frac N2}} \Big)^{\frac{1}{2N}} 
	\le \const(N)\, \Big( \int_{r''_{\eps}}^{\tilde \delta} 
	\frac{r^{-(\beta+2)2N}}{r^{-\beta\, \frac N2}} r^{N-1} dr \Big)^{\frac{1}{2N}} 
	= \frac{\const(\delta,\beta,N)}
	{\left( r''_{\eps} \right)^{\frac 34 (2+\beta)}},
	$$
	by assumptions on the behavior of $V$ around the origin. Hence, putting the above facts together, we get
	\begin{equation}\label{stima-j1}
	J_{1}\le \const(V,x_{0},\xi_{0},v_{\eps})
	(K_{\eps}(u_{\eps},t) )^{\frac 12} 
	\frac{(K_{\eps}(u_{\eps},t) - m V_{0})^{\frac 14}}{\left( r''_{\eps} \right)^{
	\frac 34 (2+\beta)}\, \eps^{\frac 12}}.
	\end{equation}
	For the term $J_{2}$, we write
	$$
	\eps^{N}\, J_{2} \le\frac{4}{\tilde \delta}\, 
	\Big( \int |u_{\eps}(t,x)|^{2}\, \eps^{2} |\nabla S_{\eps}(t,x)|^{2} 
	\Big)^{\frac 12} \Big( \int |u_{\eps}(t,x)|^{2}\, (V(x)-V_{0}) 
	\Big)^{\frac 12} \Big\| \frac{|\nabla V(x)|}{\sqrt{V(x)-V_{0}}} 
	\Big\|_{L^{\infty}(B(0,\tilde \delta) \setminus B(0,\tilde \delta/2))}
	$$
	where all the integrals are 
	computed on $B(0,\tilde \delta) \setminus B(0,\tilde \delta/2)$. 
	For the first two integrals we proceed just as above. Concerning
	the third term, on account of conditions (V1) and (V3), we have
	$$
	\Big\| \frac{|\nabla V(x)|}{\sqrt{V(x)-V_{0}}} \Big\|_{L^{\infty}
	(B(0,\tilde \delta) \setminus B(0,\tilde \delta/2))} 
	\le \frac{\phi(\tilde \delta/2)}{\tilde \delta^{\frac \beta 2}}.
	$$
	Hence, in turn, we can conclude
	\begin{equation}\label{stima-j2}
	J_{2}\le \const(V,x_{0},\xi_{0},v_{\eps})
	(K_{\eps}(u_{\eps},t) )^{\frac 12}\, (K_{\eps}(u_{\eps},t) - m\, V_{0})^{\frac 12}.
	\end{equation}
	Finally, concerning the term $J_{3}$, we write
	$$
	\eps^{N} J_{3}
	\le \frac{2 \|u_{\eps}(t,\cdot)\|_{L^{2^{*}}}^{\frac 12}}{r'_{\eps}-r''_{\eps}}
	\Big( \int |u_{\eps}(t,x)|^{2}\, \eps^{2} 
	|\nabla S_{\eps}(t,x)|^{2} \Big)^{\frac 12} 
	\Big( \int |u_{\eps}(t,x)|^{2} (V(x)-V_{0}) 
	\Big)^{\frac 14}\,  
	\Big( \int \frac{|\nabla V(x)|^{2N}}{(V(x)-V_{0})^{\frac N2}} \Big)^{\frac{1}{2N}},
	$$
	where all the integrals are computed on the set $B(0,r'_{\eps}) \setminus B(0,r''_{\eps})$. 
	For the first three terms above we proceed as for $J_{1}$. Concerning the last term, we write
	\begin{align*}
	\Big( \int_{B(0,r'_{\eps}) 
	\setminus B(0,r''_{\eps})} 
	\frac{|\nabla V(x)|^{2N}}{(V(x)-V_{0})^{\frac N2}} 
	\Big)^{\frac{1}{2N}} &\le \const(N)\, 
	\Big( \int_{r''_{\eps}}^{r'_{\eps}} \frac{r^{-(\beta+1)2N}}{r^{-\beta\, \frac N2}}\, 
	r^{N-1}\, dr \Big)^{\frac{1}{2N}} \\ 
	&= \const(\delta,\beta,N) \big( \big( r''_{\eps} \big)^{-N(1 +\frac 32\beta)} 
	- \big( r'_{\eps} \big)^{-N(1 +\frac 32\beta)} \big)^{\frac{1}{2N}} .
\end{align*}
	Hence we finally get
	\begin{equation}\label{stima-j3}
	J_{3}\le \const(V,x_{0},\xi_{0},v_{\eps}) (K_{\eps}(u_{\eps},t) )^{\frac 12}\, 
	(K_{\eps}(u_{\eps},t) - m\, V_{0})^{\frac 14} \frac{\big( \left( r''_{\eps} \right)^{-N(1 +\frac 32\beta)} - \left( r'_{\eps} \right)^{-N(1 +\frac 32\beta)} \big)^{\frac{1}{2N}}}{\eps^{\frac 12}(r'_{\eps}-r''_{\eps})}
	\end{equation}
	The proof of the inequality~\eqref{ex-lemma} is finished by choosing
	\begin{equation}\label{scelta-raggi}
	r'_{\eps} = \eps^{\frac{4}{1-\beta}},\qquad r''_{\eps} = \frac 12\, r'_{\eps}
	\end{equation}
	taking~\eqref{stima-kin} into account and 
	using \eqref{mcp} and \eqref{vanno-qui} together with \eqref{stima-j1}, \eqref{stima-j2} and \eqref{stima-j3}.
This concludes the proof of the estimate of $I_1$.

Concerning the second term $I_{2}$, at first, take $\tilde{\delta}$ as above. 
Choosing $\varepsilon$ sufficiently small, as in (\ref{eps-0}), and in reasoning in a similar way, we have 
$$
\frac{2}{\varepsilon^{N}}\int_{B(0,\tilde{\delta})}R^{2}
\left(\frac{x-x^{\varepsilon}(t)}{\varepsilon}\right)|\nabla V(x(t))-\nabla V(x)|\le \text{const}(V,R)\, \varepsilon^{2}.
$$
So, we consider $\tilde{V}\in C^{2}(\mathbb{R}^{N},\mathbb{R})$ such
that $\tilde{V}(x)=V(x)$ on $\mathbb{R}^{N}\smallsetminus B(0,\tilde{\delta}$)
and we get
$$
I_{2}\le\left|\frac{2}{\varepsilon^{N}}\int_{\mathbb{R}^{N}}R^{2}
\left(\frac{x-x^{\varepsilon}(t)}{\varepsilon}\right)[\nabla \tilde{V}(x(t))-\nabla\tilde{V}(x)]\right|+
\text{const}(V,R)\varepsilon^{2}.
$$
It holds
\begin{gather*}
\left|\int_{\mathbb{R}^{N}}R^{2}
\left(\frac{x-x^{\varepsilon}(t)}{\varepsilon}\right)[\nabla \tilde{V}(x(t))-\nabla\tilde{V}(x)]\right|\le \\
\left|\int_{\mathbb{R}^{N}}R^{2}
\left(\frac{x-x^{\varepsilon}(t)}{\varepsilon}\right)[\nabla \tilde{V}(x^\varepsilon(t))-\nabla\tilde{V}(x)]\right|+
\left|\int_{\mathbb{R}^{N}}R^{2}
\left(\frac{x-x^{\varepsilon}(t)}{\varepsilon}\right)[\nabla \tilde{V}(x(t))-\nabla\tilde{V}(x^\eps(t))]\right|
\end{gather*}
And, in light of Lemma \ref{w-come-in-k}, it holds
$$
\left|\frac 2{\eps^N}\int_{\mathbb{R}^{N}}R^{2}
\left(\frac{x-x^{\varepsilon}(t)}{\varepsilon}\right)[\nabla \tilde{V}(x(t))-\nabla\tilde{V}(x^\eps(t))]\right|\le
\text{const}(R,x_0,\xi_0,\delta,V)(|\eta^{\eps}(t)| +\sqrt{\gamma} + \eps^{2}).
$$
So we have that
$$
I_{2}\le
\left|
\frac{2}{\varepsilon^{N}}\int_{\mathbb{R}^{N}}R^{2}\left(\frac{x-x^{\varepsilon}(t)}{\varepsilon}\right)
[\nabla \tilde{V}(x^{\varepsilon}(t))-\nabla\tilde{V}(x)] dx
\right|+
\text{const}(R,x_0,\xi_0,\delta,V)(|\eta^{\eps}(t)| +\sqrt{\gamma} + \eps^{2}).
$$
Let us first write 
$$
\frac{2}{\varepsilon^{N}}\int_{\mathbb{R}^{N}}R^{2}\left(\frac{x-x^{\varepsilon}(t)}{\varepsilon}\right)
[\nabla \tilde{V}(x^{\varepsilon}(t))-\nabla\tilde{V}(x)]dx 
=2\int_{\mathbb{R}^{N}}R^{2}\left(y\right)
[\nabla \tilde{V}(x^{\varepsilon}(t))
-\nabla\tilde{V}(x^{\varepsilon}(t)+\varepsilon y)]dy.
$$
By virtue of \cite[Lemma 3.3]{keraa} we conclude 
$$
I_{2}\le \text{const}(R,x_0,\xi_0,\delta,V)(|\eta^{\eps}(t)| +\sqrt{\gamma} + \eps^{2}).
$$
For $I_3$ we write
$$I_3 \le 2 \left( \int_{\R^N}\, \frac{1}{\eps^N}\, 
\Big| u_\eps(t,x) - R\Big( \frac{x-x^\eps(t)}{\eps}\Big) \Big|^2 \right)^{\frac 12} \, \left( \int_{\R^N} \,  \frac{1}{\eps^N} 
\big| \nabla V(x(t)) - \nabla V(x) \big|^2 R^2\Big( \frac{x-x^\eps(t)}{\eps}\Big) \right)^{\frac 12}\le$$
$$\le \const(R,x_0,\xi_0)\, \left( |\eta_1^\eps(t)| + |\eta_2^\eps(t)| + \sqrt{\gamma} + \eps^2 \right)$$
arguing as in the previous estimates.

\noindent This concludes the proof.
\end{proof}

\noindent
For $\eps$ small, let us set
\begin{equation}
	\label{sempre-raggi}
\rho'_\eps = \eps^\frac{4}{2-\beta}\, , \qquad \rho''_\eps = \frac 12 \, \rho'_\eps,
\end{equation}
introduce a cut-off function
\begin{equation}\label{il-cut-off}
\chi_{\eps}(x) =
\begin{cases} 
	0 & |x|\le \rho''_{\eps}, \\ 
	1 & |x| \ge \rho'_{\eps}, 
\end{cases}
 \qquad 
|\nabla \chi_{\eps}(x)| \le \frac{2}{\rho'_{\eps} - \rho''_{\eps}} 
\quad\,\, \text{for $\rho''_{\eps}\le |x| \le \rho'_{\eps}$},
\end{equation}
and, finally, define
$$
\tilde \eta_{2}^{\eps}(t) := m V(x(t)) - \frac{1}{\eps^{N}} 
\int_{\R^{N}} |u_{\eps}(t,x)|^{2}\, V(x)\, \chi_{\eps}(x),\quad t\in [0,T^{\eps,\gamma}).
$$
Then, we have the following

\begin{proposition}\label{viene-2}
For every $t\in [0,T^{\eps,\gamma})$ we have
$|\eta_{2}^{\eps}(t)| \le |\tilde \eta_{2}^{\eps}(t)| + \const(x_{0},\xi_{0},v_{\eps}) \eps^2$, with
$$|\tilde \eta_{2}^{\eps}(0)| \le \const(R,x_{0},\xi_{0},v_{\eps})\, (\gamma + \eps^{2})$$ 
and
$$
\Big|\frac{d}{dt}\tilde \eta_2^\eps(t) \Big| 
\le \const(V,R,x_{0},\xi_{0},v_{\eps})
\Big[ |\eta^\eps(t)| 
+ \sqrt{\gamma} + \eps^2 + (K_{\eps}(u_{\eps},t)-mV_{0})^{\frac 14} \, \Big( \frac{1}{\eps^{1 + 2 \frac{2+3\beta}{2-\beta}}} + \frac{T}{\eps^{\frac 12 + 3\, \frac{2+\beta}{1-\beta}}} \Big) \Big].
$$
\end{proposition}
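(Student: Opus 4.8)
The plan is to dispatch the pointwise comparison and the initial bound quickly and then concentrate on the derivative estimate. For the inequality $|\eta_2^\eps(t)|\le|\tilde\eta_2^\eps(t)|+\const\,\eps^2$, I would observe that $\eta_2^\eps(t)-\tilde\eta_2^\eps(t)=-\frac{1}{\eps^N}\int_{\R^N}V(x)|u_\eps(t,x)|^2(1-\chi_\eps(x))$ is supported in $B(0,\rho'_\eps)$; bounding it by H\"older's inequality with exponents $(N/(N-2),N/2)$, the Sobolev inequality \eqref{gagl-nir} with $q=2^*$, and the uniform bound $M(x_0,\xi_0,v_\eps)$ of Lemma~\ref{stime-h1}, one is left with $\frac{1}{\eps^N}\|\nabla u_\eps\|_{L^2}^2\,\|V\|_{L^{N/2}(B(0,\rho'_\eps))}$. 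Since $V\sim|x|^{-\beta}$ with $\beta<2$ one has $\|V\|_{L^{N/2}(B(0,\rho'_\eps))}\lesssim(\rho'_\eps)^{2-\beta}$, and the choice $\rho'_\eps=\eps^{4/(2-\beta)}$ from \eqref{sempre-raggi} makes $(\rho'_\eps)^{2-\beta}=\eps^4$, whence the quantity is $O(\eps^2)$. For the initial bound, the point is that $\mathrm{supp}\,v_\eps\subset B(x_0,\rho)$ with $\rho<|x_0|-\delta$ by (C3), so $|x|>\delta$ on $\mathrm{supp}\,v_\eps$; as $\rho'_\eps\to0$, for $\eps$ small $\chi_\eps\equiv1$ there and thus $\tilde\eta_2^\eps(0)=\eta_2^\eps(0)$, which is estimated in Lemma~\ref{stima-en-in-2}.

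For the derivative, differentiating and using $\dot x=\xi$ from \eqref{newton} on the term $mV(x(t))$ together with the continuity equation \eqref{utile-1} followed by an integration by parts (legitimate since $V\chi_\eps$ is smooth, the singularity having been removed by $\chi_\eps$) on the remaining term gives
\begin{equation*}
\frac{d}{dt}\tilde\eta_2^\eps(t)=m\,\nabla V(x(t))\cdot\xi(t)-\int_{\R^N}p_\eps(t,x)\cdot\nabla\big(V(x)\chi_\eps(x)\big).
\end{equation*}
I would then write $\nabla(V\chi_\eps)=\chi_\eps\nabla V+V\nabla\chi_\eps$ and treat the two pieces separately. The main piece $m\nabla V(x(t))\cdot\xi(t)-\int p_\eps\cdot\chi_\eps\nabla V$ I would handle by replacing $p_\eps$ with the soliton momentum density $\frac{\xi(t)}{\eps^N}R^2\big(\frac{x-x^\eps(t)}{\eps}\big)$ produced by Lemma~\ref{rappr}: after the change of variables $x=x^\eps(t)+\eps y$ (on which $\chi_\eps\equiv1$ up to an exponentially small tail of $R$, controlled through \eqref{prop-R} exactly as in the $I_2$ estimate of Proposition~\ref{viene-1}) the soliton contribution collapses to $\xi(t)\cdot\int_{\R^N}R^2(y)\,[\nabla V(x(t))-\nabla V(x^\eps(t)+\eps y)]\,dy$, which is $O(|\eta^\eps(t)|+\sqrt\gamma+\eps^2)$ by Lemma~\ref{w-come-in-k} (to replace $x^\eps(t)$ by $x(t)$) and \cite[Lemma 3.3]{keraa}. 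The remaining error $\int\big(p_\eps-\frac{\xi(t)}{\eps^N}R^2\big)\cdot\chi_\eps\nabla V$, controlled in $H^1_\eps$ by Lemma~\ref{rappr}, contributes bounded-coefficient terms away from the origin and, near the origin, a residual weighted by the strong singularity $\nabla V\sim|x|^{-(\beta+1)}$; this residual is of the type already bounded in \eqref{ex-lemma}, so the same time-integrated kinetic-energy argument, run with the finer cut-off radius $r'_\eps=\eps^{4/(1-\beta)}$, delivers the term $T\,(K_\eps-mV_0)^{1/4}/\eps^{1/2+3(2+\beta)/(1-\beta)}$.

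The second piece $-\int p_\eps\cdot V\nabla\chi_\eps$ lives on the annulus $\rho''_\eps\le|x|\le\rho'_\eps$, where $|\nabla\chi_\eps|\le4/\rho'_\eps$ by \eqref{il-cut-off}, and I would estimate it by the four-factor H\"older inequality already used for $J_1$ and $J_3$ in Proposition~\ref{viene-1}, decomposing the integrand into $\|u_\eps\|_{L^{2^*}}^{1/2}$, $\big(\int|u_\eps|^2\eps^2|\nabla S_\eps|^2\big)^{1/2}$, $\big(\int|u_\eps|^2(V-V_0)\big)^{1/4}$ and $\big(\int V^{2N}/(V-V_0)^{N/2}\big)^{1/(2N)}$ over the annulus, and then invoking \eqref{gagl-nir}, Lemma~\ref{stime-h1} and the kinetic-energy bound \eqref{stima-kin}. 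Computing the last singular integral (where $V^{2N}/(V-V_0)^{N/2}\sim|x|^{-3N\beta/2}$) and inserting $\rho'_\eps=\eps^{4/(2-\beta)}$ produces the new contribution of the form $(K_\eps-mV_0)^{1/4}/\eps^{1+2(2+3\beta)/(2-\beta)}$ recorded in the statement. Together with the elementary term $|\nabla V(x(t))|\,|\eta_1^\eps(t)|\lesssim|\eta^\eps(t)|$, obtained by inserting $m\xi(t)=\int p_\eps+\eta_1^\eps(t)$ in the main piece, collecting everything gives the asserted estimate.

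I expect the main obstacle to be the near-origin analysis, specifically guaranteeing that each singular contribution is genuinely absorbed by the \emph{small} quantities $|\eta^\eps(t)|$, $\sqrt\gamma+\eps^2$, and the kinetic-energy excess $K_\eps-mV_0$, rather than by an uncontrolled negative power of $\eps$. The delicate point is juggling the two distinct singular scales --- the radius $r'_\eps=\eps^{4/(1-\beta)}$ tuned to the $|x|^{-(\beta+1)}$ singularity of $\nabla V$ and inherited from Proposition~\ref{viene-1}, and the radius $\rho'_\eps=\eps^{4/(2-\beta)}$ tuned to the milder $|x|^{-\beta}$ singularity of $V$ --- and checking that the weighted integrals appearing in the H\"older estimates converge and scale correctly, which is exactly where the range $0<\beta<1$ and the precise decay rates in (V1)--(V2) are used.
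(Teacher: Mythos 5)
Your handling of the pointwise comparison, of $\tilde\eta_2^\eps(0)$, and of the annulus term $-\int p_\eps\cdot V\nabla\chi_\eps$ matches the paper, and your overall skeleton (continuity equation, integration by parts, splitting $\nabla(V\chi_\eps)=\chi_\eps\nabla V+V\nabla\chi_\eps$, soliton replacement in the main piece) is the paper's as well. The genuine gap is in your treatment of the error term $\int\big(p_\eps-\tfrac{\xi(t)}{\eps^N}R^2\big)\cdot\chi_\eps\nabla V$. Writing $p_\eps-\tfrac{\xi(t)}{\eps^N}R^2=\tfrac{1}{\eps^N}|u_\eps|^2\big(\eps\nabla S_\eps-\xi(t)\big)+\tfrac{\xi(t)}{\eps^N}\big(|u_\eps|^2-R^2\big)$, only the second (amplitude) part is of the type covered by Lemma~\ref{rappr} and by the time-integrated argument \eqref{ex-lemma}, as you claim. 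The first (phase-gradient) part is not: near the origin, Lemma~\ref{rappr} is an \emph{unweighted} $H^1_\eps$ bound and is useless against the weight $|\nabla V|\sim|x|^{-(\beta+1)}$, which on the annulus $\rho''_\eps\le|x|\le\rho'_\eps$ is as large as $\eps^{-4(\beta+1)/(2-\beta)}$ (so that route would put an uncontrolled negative power of $\eps$ in front of $|\eta^\eps(t)|$, ruining the $\eps$-uniform Gronwall constants of Section~\ref{finale}); and \eqref{ex-lemma} bounds the \emph{density} $\tfrac{1}{\eps^N}|u_\eps|^2$ integrated against the fixed weight $|\nabla V|\psi_{\tilde\delta}$, not a momentum-type integrand containing $\eps\nabla S_\eps-\xi(t)$. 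The paper's key step, absent from your proposal, is Young's inequality
\begin{equation*}
\Big|\frac{1}{\eps^N}\int|u_\eps|^2\nabla V\cdot\big(\xi(t)-\eps\nabla S_\eps\big)\chi_\eps\Big|
\le \frac{1}{2\eps^N}\int|u_\eps|^2\big|\xi(t)-\eps\nabla S_\eps\big|^2
+\frac{1}{2\eps^N}\int|u_\eps|^2|\nabla V|^2\chi_\eps,
\end{equation*}
where the first integral is handled by \eqref{stime-momento-2} (producing $|\eta^\eps|$-type terms) and the second by the H\"older estimate with the $(V-V_0)$-weight of assumption (V2), via Lemma~\ref{stime-h1} and the kinetic energy, giving $(K_\eps-mV_0)^{1/2}/\big[\eps(\rho''_\eps)^{1+\frac32\beta}\big]$ as in \eqref{stima-i1-new}.

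This missing step is precisely the one that generates the dominant exponent in the statement, since by \eqref{sempre-raggi} one has $\eps(\rho''_\eps)^{1+\frac32\beta}\sim\eps^{1+2\frac{2+3\beta}{2-\beta}}$. Your bookkeeping assigns that exponent instead to the $V\nabla\chi_\eps$ term, but the four-factor H\"older you describe (the paper's $I_3$, estimate \eqref{stima-i3-new}) yields, after inserting \eqref{sempre-raggi}, a denominator $\eps^{\frac12}(\rho'_\eps)^{\frac32\beta}\sim\eps^{\frac12+\frac{6\beta}{2-\beta}}$, and $\frac12+\frac{6\beta}{2-\beta}<1+2\frac{2+3\beta}{2-\beta}$ for every $\beta\in(0,1)$; so the annulus term is strictly dominated and cannot be the source of the stated bound. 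In short: the decomposition you propose is workable, but without the Young/(V2)-weighted-H\"older treatment of the phase-gradient error the derivative estimate cannot be closed, and the term $(K_\eps-mV_0)^{1/4}/\eps^{1+2\frac{2+3\beta}{2-\beta}}$ in the Proposition is left unaccounted for.
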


\begin{proof}
We first estimate the behavior of $\eta_{2}^{\eps}$ near the origin. We can write
$$
\eta_{2}^{\eps}(t) = m V(x(t)) - \frac{1}{\eps^{N}} 
\int_{\R^{N}} |u_{\eps}(t,x)|^{2} V(x) 
\chi_{\eps}(x)  - \frac{1}{\eps^{N}}\, \int_{\R^{N}} |u_{\eps}(t,x)|^{2}\, V(x)\, (1-\chi_{\eps}(x)) 
$$
Moreover by H\"older inequality, inequality \eqref{gagl-nir}, 
Lemma \ref{stime-h1} and assumption (V1), 
\begin{align*}
\Big| \frac{1}{\eps^{N}} \int_{\R^{N}} |u_{\eps}(t,x)|^{2} V(x)\, (1-\chi_{\eps}(x)) \Big| 
&\le \frac{1}{\eps^{N}}\, \int_{B(0,\rho'_{\eps})} \, |u_{\eps}(t,x)|^{2}\, V(x) \\
&\le \frac{1}{\eps^{N}}\, \| u_{\eps} \|_{L^{2^{*}}}^{2}
\Big( \int_{B(0,\rho'_{\eps})} V(x)^{\frac N2} \Big)^{\frac 2N} \le 
\const(x_{0},\xi_{0},v_{\eps})\, \frac{(\rho'_{\eps})^{2-\beta}}{\eps^{2}}.
\end{align*}
Whence, there holds
$$
|\eta_{2}^{\eps}(t)| \le |\tilde \eta_{2}^{\eps}(t)| 
+ \const(x_{0},\xi_{0},v_{\eps})\, \frac{(\rho'_{\eps})^{2-\beta}}{\eps^{2}} = |\tilde \eta_{2}^{\eps}(t)| 
+ \const(x_{0},\xi_{0},v_{\eps})\, \eps^{2}.
$$
by \eqref{sempre-raggi}. Using also Lemma \ref{eta-iniz} the estimate for $|\tilde \eta_{2}^{\eps}(0)|$ follows.

Using formulas~\eqref{newton} and \eqref{utile-1} and the radial notation 
\eqref{p-radiale} for the momentum density, we have
\begin{align*}
\Big| \frac{d}{dt}\tilde \eta_2^\eps(t)\Big| &= \Big| m \nabla V(x(t)) \cdot \xi(t) 
+ \int_{\R^N} (\nabla\cdot p_{\eps}(t,x)) V(x) \chi_\eps(x) \Big| \\
& \le \Big| \frac{1}{\eps^{N}}  \int_{\R^N} 
|u_{\eps}(t,x)|^{2}\, (\nabla V(x(t))\cdot \xi(t)) 
\chi_\eps(x) - \frac{1}{\eps^N} 
\int_{\R^N} |u_\eps(t,x)|^2  (\eps \nabla S_{\eps}(t,x) \cdot \nabla V(x) )\, \chi_\eps(x)  \Big| \\
&+ \Big| \frac{1}{\eps^{N}}  \int_{\R^N} |u_{\eps}(t,x)|^{2}\, 
(\nabla V(x(t))\cdot \xi(t))\, (1-\chi_\eps(x)) \Big| \\
&+ \Big| \frac{1}{\eps^N} \int_{\R^N} |u_\eps(t,x)|^2 
(\eps \nabla S_{\eps}(t,x) \cdot \nabla \chi_\eps(x) ) V(x)  \Big| 
=: I_1 + I_2 +I_3.
\end{align*}
Let us estimate these terms, beginning with $I_1$. By adding and subtracting 
$|u_\eps(t,x)|^2 \nabla V(x) \cdot \xi(t)$, we write
\begin{align*}
I_1 &\le \Big| \frac{1}{\eps^{N}} \, \int_{\R^N} |u_{\eps}(t,x)|^{2} 
\big[ \nabla V(x(t)) - \nabla V(x) \big]\cdot \xi(t)\, \chi_\eps(x)  \Big|  \\
& + \Big| \frac{1}{\eps^{N}} \, \int_{\R^N} |u_{\eps}(t,x)|^{2} 
\nabla V(x) \cdot \big[ \xi(t) - \eps \nabla S_{\eps}(t,x) \big]  \chi_\eps(x) \Big| \\
& \le \const(V,R,x_{0},\xi_{0},v_{\eps})
\Big( |\eta^\eps(t)| + \sqrt{\gamma} 
+ \eps^2 + T\, \frac{(K_{\eps}(u_{\eps},t)-
mV_{0})^{\frac 14}}{\eps^{\frac 12 + 3 \frac{2+\beta}{1-\beta}}}  \Big) + \\
& + \Big| \frac{1}{\eps^{N}} 
\, \int_{\R^N} |u_{\eps}(t,x)|^{2}\, \nabla V(x) 
\cdot \big[ \xi(t) - \eps \nabla S_{\eps}(t,x) \big]  \chi_\eps(x) \Big|,
\end{align*}
where we have used the estimate of Proposition \ref{viene-1} for the derivative of $\eta_1^\eps$. Moreover
\begin{align*}
& \Big| \frac{1}{\eps^{N}}  \int_{\R^N} |u_{\eps}(t,x)|^{2} 
\nabla V(x) \cdot \big[ \xi(t) - \eps \nabla S_{\eps}(t,x) \big]  \chi_\eps(x) \Big| \\
&\le \frac{1}{2\eps^{N}} \, \int_{\R^N} |u_{\eps}(t,x)|^{2} \big| \xi(t) - \eps \nabla 
S_{\eps}(t,x) \big|^2 + \frac{1}{2\eps^{N}} \, \int_{\R^N} |u_{\eps}(t,x)|^{2}\, |\nabla V(x)|^2 \chi_\eps(x) \\
&\le \frac{1}{2}|\xi(t)| |\eta_1^\eps(t)| + \frac{1}{2}|\eta_2^\eps(t)| + \const(R,x_0,\xi_0) (\sqrt{\gamma} + \eps^2) 
+ \frac{1}{2\eps^{N}}  \int_{\R^N} |u_{\eps}(t,x)|^{2}\, |\nabla V(x)|^2 \chi_\eps(x),
\end{align*}
by virtue of inequality~\eqref{stime-momento-2}. Finally, by H\"older inequality and the definition of 
$\chi_\eps$ in \eqref{il-cut-off}, we get
$$
\int_{\R^N} |u_{\eps}(t,x)|^{2}\, |\nabla V(x)|^2 \chi_\eps(x)  
\le \|u_\eps(t,\cdot)\|_{L^{2^*}}\, \Big( \int_{\R^N}\, |u_\eps(t,x)|^2 
(V(x)-V_0) \Big)^{\frac 12} \Big( 
\int_{\R^N}\, \frac{|\nabla V(x)|^{2N}}{(V(x)-V_0)^{\frac N2}} \chi_\eps^N(x)  \Big)^{\frac 1N},
$$
and we can use the estimates
$$
\|u_{\eps}(t,\cdot)\|_{L^{2^{*}}} \le \const(N) 
\| \nabla u_{\eps}(t\cdot)\|_{L^{2}} \le \const(N,x_{0},\xi_{0},v_{\eps})\, \eps^{\frac{N-2}{2}},
$$
via inequality \eqref{gagl-nir} and Lemma \ref{stime-h1},
$$
\Big( \int_{\R^N}\, |u_\eps(t,x)|^2\, (V(x)-V_0) 
\Big)^{\frac 12} \le ( \eps^{N} (K_{\eps}(u_{\eps},t) - m V_{0}) )^{\frac 12},
$$
by definition of $K_{\eps}(u_{\eps},t)$ and the conservation of mass,
$$
\Big( \int_{\R^N} \frac{|\nabla V(x)|^{2N}}{(V(x)-V_0)^{\frac N2}} 
\chi_\eps^N(x)  \Big)^{\frac 1N} \le \const(V,N)\, \Big( \int_{\rho''_{\eps}}^{1} 
\frac{r^{-(\beta+1)2N}}{r^{-\beta\, \frac N2}}\, 
r^{N-1}\, dr \Big)^{\frac{1}{N}} 
= \const(V,\delta,\beta)\, ( \rho''_{\eps})^{-(1 +\frac 32\beta)},
$$
by assumptions (V1) and (V2). Hence, we obtain
\begin{equation}\label{stima-i1-new}
I_1\le \const(R,V,x_0,\xi_0) \Big[ |\eta_1^\eps(t)| + |\eta_2^\eps(t)| + \sqrt{\gamma} + \eps^2 + T\, \frac{(K_{\eps}(u_{\eps},t)-
mV_{0})^{\frac 14}}{\eps^{\frac 12 + 3 \frac{2+\beta}{1-\beta}}} + \frac{(K_{\eps}(u_{\eps},t) 
- m\, V_{0})^{\frac 12}}{\eps ( \rho''_{\eps} )^{(1 +\frac 32\beta)}} \Big].
\end{equation}
We now turn to the estimate for $I_2$. By assumption (V3),
\eqref{deltabblow} and the definition of $\chi_\eps$, we write
$$
I_2 \le \phi(\delta) |\xi(t)| \frac{1}{\eps^{N}}  \int_{B(0,\rho'_\eps)} |u_{\eps}(t,x)|^{2},
$$
and by H\"older inequality, \eqref{gagl-nir} and Lemma \ref{stime-h1}, we obtain
\begin{equation}\label{stima-i2-new}
I_2\le \const(R,V,x_0,\xi_0)\, |\xi(t)|\, \eps^{-N}\, \|u_\eps\|_{L^{2^*}}^2\, (\rho'_\eps)^2 \le 
\const(R,V,x_0,\xi_0,v_\eps)\, |\xi(t)|\, \frac{(\rho'_\eps)^2}{\eps^2}
\end{equation}
We now estimate $I_3$. We apply again H\"older inequality and the properties of $\chi_\eps$ to get
$$
\eps^N\, I_3\le\frac{2\, \|u_\eps\|_{L^{2^*}}^{\frac 12}}{\rho'_\eps - \rho''_\eps}\, 
\Big( \int |u_{\eps}(t,x)|^{2}\, \eps^{2} |\nabla S_{\eps}(t,x)|^{2}\Big)^{\frac 12}\, 
\Big( \int |u_{\eps}(t,x)|^{2}\, (V(x)-V_{0}) \Big)^{\frac 14}\,  
\Big( \int \frac{V(x)^{2N}}{(V(x)-V_{0})^{\frac N2}} \Big)^{\frac{1}{2N}},
$$
where all integrals are computed on 
$B(0,\rho'_\eps) \setminus B(0,\rho''_\eps)$. Hence, we have
$$
\|u_{\eps}(t,\cdot)\|_{L^{2^{*}}}^{\frac 12} \le \const(N)\, 
\| \nabla u_{\eps}(t,\cdot)\|_{L^{2}}^{\frac 12} \le \const(N,x_{0},\xi_{0},v_{\eps})\, \eps^{\frac{N-2}{4}}
$$
by inequality \eqref{gagl-nir} and Lemma \ref{stime-h1},
$$
\Big( \int_{B(0,\rho'_\eps) \setminus B(0,\rho''_\eps)} 
|u_{\eps}(t,x)|^{2}\, \eps^{2} |\nabla S_{\eps}(t,x)|^{2} 
\Big)^{\frac 12} \le 
\Big( \int_{\R^{N}} |u_{\eps}(t,x)|^{2}\, \eps^{2} |\nabla S_{\eps}(t,x)|^{2} 
\Big)^{\frac 12} \le \big(\eps^{N}\, K_{\eps}(u_{\eps},t) \big)^{\frac 12}
$$
by definition of $K_{\eps}(u_{\eps},t)$ and by the non-negativity of $V$,
$$
\Big( \int_{B(0,\rho'_\eps) \setminus B(0,\rho''_\eps)}\, 
|u_\eps(t,x)|^2\, (V(x)-V_0) \Big)^{\frac 14} \le 
\Big( \int_{\R^N}\, |u_\eps(t,x)|^2\, (V(x)-V_0) \Big)^{\frac 14} 
\le \big( \eps^{N} (K_{\eps}(u_{\eps},t) - m V_{0}) \big)^{\frac 14},
$$
by definition of $K_{\eps}(u_{\eps},t)$ and the conservation of mass,
\begin{align*}
\Big( \int_{B(0,\rho'_\eps) \setminus B(0,\rho''_\eps)}\, 
\frac{V(x)^{2N}}{(V(x)-V_0)^{\frac N2}} \Big)^{\frac 1N} 
&\le \const(V,N)\, \Big( \int_{\rho''_{\eps}}^{\rho'_\eps} 
r^{-\beta\, \frac 32 N} \, r^{N-1}\, dr \Big)^{\frac{1}{N}} \\
&= \const(\delta,\beta,N)\, \big| 
\left( \rho''_{\eps} \right)^{N(1 -\frac 32\beta)} 
- \left( \rho'_{\eps} \right)^{N(1 -\frac 32\beta)} \big|^{\frac{1}{N}} ,
\end{align*}
by the assumptions (V1). Hence
\begin{equation}\label{stima-i3-new}
I_3\le \const(R,V,x_0,\xi_0)\, 
(K_{\eps}(u_{\eps},t))^{\frac 12}\, 
(K_{\eps}(u_{\eps},t) - m\, V_{0})^{\frac 14} \frac{\big| \left( \rho''_{\eps} \right)^{N(1 -\frac 32\beta)} - 
\left( \rho'_{\eps} \right)^{N(1 -\frac 32\beta)} 
\big|^{\frac{1}{N}}}{\eps^{\frac 12}\, (\rho'_\eps-\rho''_\eps)}.
\end{equation}
Taking into account~\eqref{sempre-raggi} the assertion finally follows from 
inequalities \eqref{stima-i1-new}, \eqref{stima-i2-new} and \eqref{stima-i3-new}.
\end{proof}

\section{Proof of the main result completed} \label{finale}

\noindent
Taking into account conditions~\eqref{cond-small} and inequality~\eqref{stima-kin}, we can find a 
$\const(R,\xi_0)$ such that
$$
K_{\eps}(u_{\eps},t) - m\, V_{0} \le \frac 12 m |\xi_{0}|^{2} + \frac{1}{\eps^{N}} \, \int_{\R^{N}} (V(x)-V_0) |v_{\eps}(x)|^{2} + 
\const(R,\xi_{0})\, \sqrt{\gamma} \le \eps^{2\frac{17+\beta}{1-\beta}}.
$$
Then, by Propositions \ref{viene-1}-\ref{viene-2}, for all $t\in [0,T^{\eps,\gamma})$ 
we have $|\eta_{2}^{\eps}(t)| \le \const(x_{0},\xi_{0},v_{\eps}) \big(|\tilde \eta_{2}^{\eps}(t)| + \eps^2)$ and
\begin{align*}
&	\Big|\frac{d}{dt}\eta_1^\eps(t) \Big| 
	\le \const(V,R,x_{0},\xi_{0},v_{\eps})
	\big( |\eta_1^\eps(t)| + |\tilde\eta_2^\eps(t)| + |\eta_3^\eps(t)| + \eps^2\big),  \\
&\Big|\frac{d}{dt}\tilde \eta_2^\eps(t) \Big| 
\le \const(V,R,x_{0},\xi_{0},v_{\eps})
\big( |\eta_1^\eps(t)| + |\tilde\eta_2^\eps(t)|+ |\eta_3^\eps(t)| + \eps^2 \big).
\end{align*}
Furthermore, by virtue of Lemma~\ref{w-come-in-k},
for every $t\in [0, T^{\eps,\gamma})$ we have
\begin{align*}
\Big|\frac{d}{dt}\eta_3^\eps(t) \Big| &\le 
\const(R,x_{0},\xi_{0},\delta) (|\eta_{1}^{\eps}(t)| + |\eta_{2}^{\eps}(t)|+ |\eta_{3}^{\eps}(t)| 
+\sqrt{\gamma} + \eps^{2}) \\
&\le 
\const(R,x_{0},\xi_{0},\delta) (|\eta_{1}^{\eps}(t)| + |\tilde \eta_{2}^{\eps}(t)|+ |\eta_{3}^{\eps}(t)| 
+\sqrt{\gamma} + \eps^{2}).
\end{align*}
It is readily verified that all the constants in the various estimates
contained in the previous sections can be bounded from above by 
quantities which are independent upon $\eps$. In turn, taking into account 
Lemma~\ref{eta-iniz} and Proposition \ref{viene-2}, there exists a positive constant $C$ such that
$$
|\eta_1^\eps(t)| + |\tilde\eta_2^\eps(t)|+|\eta_3^\eps(t)|\leq C\eps^2+C\int_0^t
(|\eta_1^\eps(\tau)| + |\tilde\eta_2^\eps(\tau)|+|\eta_{3}^{\eps}(\tau)| )d\tau,
\quad\text{for all $t\in [0,T^{\eps,\gamma})$}.
$$
Then, Gronwall lemma yields
$|\eta_1^\eps(t)| + |\tilde\eta_2^\eps(t)|+|\eta_{3}^{\eps}(t)| \leq C\eps^2$
for all $t\in [0,T^{\eps,\gamma})$ and in turn also
$|\eta_1^\eps(t)| + |\eta_2^\eps(t)|+|\eta_{3}^{\eps}(t)| \leq C\eps^2$
for all $t\in [0,T^{\eps,\gamma})$. Also from Lemma \ref{w-come-in-k}, it holds $\eps |w^{\eps}(t)| \leq C \eps^{2}$.
In particular in \eqref{t-eps-new} one can take $T^{\eps,\gamma}=T$ for $\eps$ small enough. Then, from Lemma~\ref{rappr} there exist functions $\theta^{\eps}:[0,T^{\eps,\gamma})\to[0,2\pi)$ such that
$$
\big\|u_\eps(t,x)-e^{\frac{\imath}{\eps}(\xi(t)\cdot x+\theta^{\eps}(t))}
R\Big(\frac{x-x^\eps(t)}{\eps}\Big)\big\|^2_{H^1_\eps}\leq C\eps^2,
\quad\text{for all $t\in [0,T]$}
$$
which together with 
$$
\big\| R\Big(\frac{x-x^\eps(t)}{\eps}\Big) - R\Big(\frac{x-x(t)}{\eps}\Big)\big\|^2_{H^1} \le |w^\eps|^2\, \| \nabla R\|^2_{H^1} \le 
C \eps^2
$$
concludes the proof of Theorem~\ref{main}.

\appendix

\section{Semi-singular potentials}
\label{semi-sing-sect}

Let $\eps,\delta\in (0,1]$, $N\geq 1$ and $0<p<2/N$. In this section, we shall consider the nonlinear Schr\"odinger equation
for a family of smooth nearly singular external potentials $V_\delta:\R^N\to\R$, 
\begin{equation}
	\label{problem-semis}
\im \partial_t u+\frac{\eps^2}{2}\Delta u-V_\delta(x)u+|u|^{2p}u=0,\qquad t\in\R,\,\, x\in\R^N,
\end{equation}
where $\im$ is the imaginary unit and $u:\R\times\R^N\to\CC$ is a complex-valued function. 
we want to investigate the soliton dynamics
behavior as $\eps\to 0$ of the solutions to~\eqref{problem-semis} which start from a rescaled bump-like
initial data of the form
\begin{equation}
	\label{initidata}
u(x,0)=R\Big(\frac{x-x_0}{\eps}\Big)e^{\frac{\im}{\eps} \xi_0\cdot x},
\qquad x_0,\xi_0\in\R^N,
\end{equation}
Consider, for each $\delta\in (0,1]$, the Newtonian system
\begin{equation}
\label{newton-semis}
\begin{cases}
\dot x=\xi, & \\ 
\dot\xi=-\nabla V_\delta(x),  & \\
x(0)=x_0,  \,\,\,
\xi(0)=\xi_0.
\end{cases}
\end{equation}
Under suitable assumptions on the potential $V_\delta$, for each $\delta\in (0,1]$,
system~\eqref{newton-semis} admits a unique global solution $t\mapsto (x_\delta(t),\xi_\delta(t))$
and its associated Hamiltonian energy 
${\mathscr H}_\delta(t)=\frac{1}{2}|\xi_\delta(t)|^2+V_\delta(x_\delta(t))$, $t\geq 0$,
remains constant through the motion.

\noindent
Let $(V_\delta)_{\delta\in(0,1]}$ be a family of functions $V_\delta\in C^3(\R^N,\R^+)$ such that $\|D^\alpha V_\delta\|_{L^\infty}<\infty$
for every $0\leq|\alpha|\leq 3$ and all $\delta\in (0,1]$. We define the function $\phi:(0,1]\to (0,\infty)$ by setting 
$$
\phi(\delta):=\sum_{0\leq |\alpha|\leq 3}\|D^\alpha V_\delta\|_{L^\infty},
$$
for all $\delta\in (0,1]$. We shall assume that there exists a 
set ${\mathscr V}\subset \R^+\times\R^+$ such that $(0,0)\in \bar{{\mathscr V}}$ 
and 
\begin{equation}
	\label{mainvanish}
\sup_{\substack{(\eps,\delta)\in {\mathscr V} \\ \eps,\delta\in (0,1]}}\,\eps^2\phi(\delta)<+\infty,
\qquad\quad
	\limsup_{\substack{(\eps,\delta)\in {\mathscr V} \\ \eps\to 0^+ \\ \delta\to 0^+}}\,\eps^2\phi(\delta)=0.
\end{equation}
Without loss of generality, we may assume that $\phi(\delta)\geq 1$, for all $\delta\in (0,1].$
\vskip5pt
\noindent
The main result of the Appendix, possibly useful for numerical purposes, is the following

\begin{theorem}
	\label{mainth}
	Let $T>0$ and let $u^{\eps,\delta}$ be the unique solution to problem \eqref{problem-semis}-\eqref{initidata}. 
	Assume \eqref{mainvanish} and that for the initial position $x_0\in\R^N$ it holds
	\begin{equation*}
	\sup_{\delta\in (0,1]}V_\delta(x_0)<+\infty.
	\end{equation*}
	Then there exist $C>0$, and $\eps_0,\delta_0>0$ sufficiently small that
	$$
	u^{\eps,\delta}(t,x)=R\left(\frac{\cdot-x_\delta(t)}{\eps}\right)e^{\frac{\im}{\eps}(\xi_\delta(t)\cdot x+\vartheta^{\eps,\delta}(t))}+E^{\eps,\delta}(x,t),
	\qquad \|E^{\eps,\delta}(t,\cdot)\|_{H^1_\eps}\leq C\eps\phi^{2}(\delta),
	$$
	uniformly on $[0,T]$ for all $(\eps,\delta)\in {\mathscr V}$ such that 
	$0<\eps\leq\eps_0$ and $0<\delta\leq\delta_0$, being $x_\delta(t)$ the solution to 
	system~\eqref{newton-semis} and $\vartheta^{\eps,\delta}$ a suitable shift term. In particular, provided that
	\begin{equation*}
		\limsup_{\substack{(\eps,\delta)\in {\mathscr V} \\ \eps\to 0^+ \\ \delta\to 0^+}}\,\eps\phi^{2}(\delta)=0,
	\end{equation*} 
	a soliton dynamic behavior occurs.
\end{theorem}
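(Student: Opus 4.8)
The plan is to transport the machinery of Sections~\ref{sec:solution}--\ref{finale} to the family $(V_\delta)_{\delta\in(0,1]}$, keeping the blow-up rate $\phi(\delta)$ explicit in every estimate where, for the singular problem, the fixed number $\phi(\delta(x_0,\xi_0))$ appeared. Two features make the adaptation cleaner than the singular case. Since each $V_\delta$ is globally of class $C^3$, the solution $u^{\eps,\delta}$ never meets a true singularity, so the cut-off gymnastics at the origin of Propositions~\ref{viene-1}--\ref{viene-2} are no longer needed: every integral against $\nabla V_\delta$ or $\nabla^2V_\delta$ is bounded by the corresponding sup-norm, hence by a power of $\phi(\delta)$. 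Moreover the initial datum \eqref{initidata} is exactly the rescaled ground state, so the analogues of (C1)--(C4) hold with $\gamma=0$; one computes $\eta_1^{\eps,\delta}(0)=0$ directly, while the analogues of Lemmas~\ref{stima-en-in-2} and~\ref{eta-iniz} give $|\eta_2^{\eps,\delta}(0)|+|\eta_3^{\eps,\delta}(0)|\le C\eps^2\phi(\delta)$, the odd first moment of $R^2$ vanishing by radial symmetry and the second moment $\int_{\R^N}|y|^2R^2\,dy$ being finite by \eqref{prop-R}.

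First I would reprove the energy comparison. Running the proofs of Lemma~\ref{stima-en-in-2} and Proposition~\ref{import} with $V$ replaced by $V_\delta$ and $\gamma=0$ yields, for all $t$,
\[
\big|E_\eps(u^{\eps,\delta},t)-\EE(R)-m\,{\mathscr H}_\delta(x_\delta(t),\xi_\delta(t))\big|\le C\,\eps^2\phi(\delta),
\]
the right-hand side being the second-order Taylor remainder of $V_\delta$, controlled by $\|D^3V_\delta\|_{L^\infty}\le\phi(\delta)$ times the finite weight $\int|y|^2R^2$. The conserved Hamiltonian of \eqref{newton-semis} together with $\sup_\delta V_\delta(x_0)<+\infty$ bounds $|\xi_\delta|$, and hence $x_\delta$, uniformly in $\delta$ on $[0,T]$, while the analogue of Lemma~\ref{stime-h1} and the first half of \eqref{mainvanish} (so that $\eps^2\phi(\delta)=O(1)$) keep $K_\eps(u^{\eps,\delta},t)$ bounded uniformly for $(\eps,\delta)\in{\mathscr V}$. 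With the auxiliary profile $\Psi^{\eps,\delta}$ defined as in \eqref{def-psi}, Weinstein's Proposition~\ref{weinstein} then supplies, through the analogue of Lemma~\ref{rappr}, modulation parameters $\theta^{\eps,\delta}$ and $x^{\eps,\delta}(t)=x_\delta(t)-\eps w^{\eps,\delta}(t)$ with
\[
\big\|u^{\eps,\delta}(t,\cdot)-e^{\frac{\imath}{\eps}(\xi_\delta(t)\cdot x+\theta^{\eps,\delta}(t))}R(\tfrac{\cdot-x^{\eps,\delta}(t)}{\eps})\big\|^2_{H^1_\eps}\le{\mathcal C}\big(|\xi_\delta(t)|\,|\eta_1^{\eps,\delta}|+|\eta_2^{\eps,\delta}|+C\eps^2\phi(\delta)\big)
\]
on the continuation interval $[0,T^{\eps,\delta})$ built as in \eqref{t-eps-new}.

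The core is the system of differential inequalities for $\eta^{\eps,\delta}:=|\eta_1^{\eps,\delta}|+|\eta_2^{\eps,\delta}|+|\eta_3^{\eps,\delta}|$. Differentiating and invoking \eqref{utile-1}--\eqref{utile-2}, each $\frac{d}{dt}\eta_i^{\eps,\delta}$ becomes an integral of $|u^{\eps,\delta}|^2$, or of the momentum density, against $\nabla V_\delta(x_\delta(t))-\nabla V_\delta(x)$. Here lies the \emph{main obstacle}: one must arrange that $\phi(\delta)$ enters these inequalities only through the inhomogeneous term and never as a coefficient of $\eta^{\eps,\delta}(t)$, since a factor $\phi(\delta)$ in front of $\eta^{\eps,\delta}$ would, through Gronwall's lemma, generate the fatal $e^{C\phi(\delta)T}$ and destroy every power of $\eps$. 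The mechanism that makes this possible is that $u^{\eps,\delta}$ stays concentrated along the trajectory $x_\delta(t)$, which the conserved Hamiltonian keeps at a positive distance from the region where the derivatives of $V_\delta$ are large: the local values $\nabla V_\delta(x_\delta(t))$ and $\nabla^2V_\delta(x_\delta(t))$ remain bounded uniformly in $\delta$ and furnish $\phi$-free coefficients, whereas $\phi(\delta)$ surfaces only when the exponentially small tail of $R$ reaches the singular layer or through the $D^3V_\delta$ Taylor remainder. Each such occurrence carries either an exponentially small mass or a spare power of $\eps$, and is pushed into the forcing by Young's inequality, e.g. $\eps\phi(\delta)\sqrt{|\eta^{\eps,\delta}|}\le\tfrac12|\eta^{\eps,\delta}|+\tfrac12\eps^2\phi^2(\delta)$. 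One thereby reaches the closed inequality
\[
\Big|\tfrac{d}{dt}\eta^{\eps,\delta}(t)\Big|\le C\big(\eta^{\eps,\delta}(t)+\eps^2\phi^{2}(\delta)\big),\qquad t\in[0,T^{\eps,\delta}),
\]
with $C=C(R,x_0,\xi_0)$ independent of $\eps$ and $\delta$.

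Finally, from $\eta^{\eps,\delta}(0)\le C\eps^2\phi(\delta)\le C\eps^2\phi^2(\delta)$, Gronwall's lemma on the fixed interval $[0,T]$ gives $\eta^{\eps,\delta}(t)\le C(T)\,\eps^2\phi^2(\delta)$; a continuation argument as in Section~\ref{finale}, using the analogue of Lemma~\ref{w-come-in-k} to bound $\eps|w^{\eps,\delta}|\le C(\eta^{\eps,\delta}+\eps^2)\le C\eps^2\phi^2(\delta)$, shows that the constraints defining $T^{\eps,\delta}$ in \eqref{t-eps-new} are not saturated, whence $T^{\eps,\delta}=T$, for $\eps,\delta$ small along ${\mathscr V}$. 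Feeding $\eta^{\eps,\delta}\le C\eps^2\phi^2(\delta)$ into the Weinstein representation of the second paragraph produces a remainder of size $C\eps\phi(\delta)$ in the $H^1_\eps$ norm, while recentring from $x^{\eps,\delta}(t)$ to $x_\delta(t)$ costs at most $\|R(\tfrac{\cdot-x^{\eps,\delta}(t)}{\eps})-R(\tfrac{\cdot-x_\delta(t)}{\eps})\|_{H^1_\eps}\le C|w^{\eps,\delta}(t)|\le C\eps\phi^2(\delta)$, which is the dominant contribution. Setting $\vartheta^{\eps,\delta}=\theta^{\eps,\delta}$ and letting $E^{\eps,\delta}$ be the resulting error establishes $\|E^{\eps,\delta}(t,\cdot)\|_{H^1_\eps}\le C\eps\phi^2(\delta)$ uniformly on $[0,T]$. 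The concluding statement is then immediate: under the additional assumption $\eps\phi^2(\delta)\to0$ the error vanishes in the limit along ${\mathscr V}$, so $u^{\eps,\delta}$ tracks the ground state travelling along the Newtonian flow \eqref{newton-semis}, which is the asserted soliton dynamics.
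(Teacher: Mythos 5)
Your overall scaffolding (conserved quantities, Weinstein modulation, the three $\eta$'s, Gronwall, recentring) matches the paper's Appendix, but the decisive step fails. You claim the closed inequality $|\tfrac{d}{dt}\eta^{\eps,\delta}(t)|\le C\big(\eta^{\eps,\delta}(t)+\eps^2\phi^2(\delta)\big)$ with $C$ independent of $\delta$, on the grounds that conservation of ${\mathscr H}_\delta$ keeps $x_\delta(t)$ away from the region where the derivatives of $V_\delta$ are large, so that $\nabla V_\delta(x_\delta(t))$ and $\nabla^2V_\delta(x_\delta(t))$ stay bounded uniformly in $\delta$. Nothing in the hypotheses supports this: the theorem assumes only $V_\delta\in C^3(\R^N,\R^+)$ with $\phi(\delta)=\sum_{|\alpha|\le 3}\|D^\alpha V_\delta\|_{L^\infty}<\infty$, the condition \eqref{mainvanish}, and $\sup_{\delta}V_\delta(x_0)<\infty$. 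There is no pointwise relation between the size of $V_\delta$ and the size of its derivatives. For instance $V_\delta(x)=1+\sqrt{\delta}\,\big(1+\sin(|x|/\delta)\big)$ satisfies every hypothesis and has ${\mathscr H}_\delta$ uniformly bounded, yet $|\nabla V_\delta|\sim\delta^{-1/2}$ everywhere, so the derivatives blow up along the entire trajectory: Hamiltonian conservation bounds $V_\delta(x_\delta(t))$ and $|\xi_\delta(t)|$, nothing more. Moreover, even granting localization of the soliton bulk, the deviation of size $\eta^{\eps,\delta}$ in the concentration estimates (Lemma \ref{dual-estimate}) is not known to be supported near $x_\delta(t)$; when it is integrated against $\nabla V_\delta$ one can only invoke $\|\nabla V_\delta\|_{C^2}\le\phi(\delta)$, which reinstates exactly the term $\phi(\delta)\,\eta^{\eps,\delta}$ you were trying to exclude. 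Your Young-inequality absorption handles terms of the form $\eps\,\phi(\delta)\sqrt{\eta^{\eps,\delta}}$, not $\phi(\delta)\,\eta^{\eps,\delta}$.

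The paper's proof accepts the bad coefficient and neutralizes it by shortening the time window: Lemma \ref{gronw} proves $|\tfrac{d}{dt}\eta_i^{\eps,\delta}|\le C\phi(\delta)\,\eta^{\eps,\delta}+C\eps^2\phi^2(\delta)$, and the bootstrap time \eqref{TTimedef} is confined to $[0,\phi(\delta)^{-1}]$, so that Gronwall's factor $e^{\phi(\delta)t}$ stays bounded and yields $\eta^{\eps,\delta}\le C\eps^2\phi(\delta)$ there, i.e.\ an $H^1_\eps$ error of order $\eps\phi(\delta)$ per window. The fixed interval $[0,T]$ is then covered by iterating the argument over roughly $T\phi(\delta)$ consecutive windows, and the accumulated error $\phi(\delta)\cdot\eps\phi(\delta)=\eps\phi^2(\delta)$ is precisely the bound in the statement. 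In your accounting the exponent $2$ on $\phi(\delta)$ comes instead from the recentring term $|w^{\eps,\delta}|$, while the Gronwall step runs on all of $[0,T]$; this mismatch is the symptom of the gap. Under the stated hypotheses the single-Gronwall route cannot be closed, and the windowing-plus-iteration structure is not cosmetic but essential to the result.
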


\noindent
The theorem will be proved using essentially the arguments developed in \cite{bronski,keraa} and explicitly 
highlighting the dependence of the conclusions from the parameter $\delta$ ruling the degree of singularity
of the potential.

\subsection{Preparatory results}

\noindent
It is known that the solution $u^{\eps,\delta}$ to \eqref{problem-semis}-\eqref{initidata} exists for all times $t$ with
$u^{\eps,\delta}(t)\in H^2(\R^N)$ and has conserved quantities,
the mass
\begin{equation}
	\label{masscons}
\frac{1}{\eps^N}\int_{\R^N}|u^{\eps,\delta}(t,x)|^2=\|R\|_{L^2}^2:=m
\end{equation}
independently of $\eps,\delta\in (0,1]$, and the energy
\begin{align*}
	E_{\eps,\delta}(t)=\frac{1}{2\eps^{N-2}}\int_{\R^N}|\nabla u^{\eps,\delta}(t)|^2
	+\frac{1}{\eps^{N}}\int_{\R^N}V_\delta(x)|u^{\eps,\delta}(t)|^2
	-\frac{1}{\eps^{N}(p+1)}\int_{\R^N}|u^{\eps,\delta}(t)|^{2p+2}=E_{\eps,\delta}(0).
\end{align*}
In the spirit of \cite[Lemma 3.3]{keraa} it holds
\begin{lemma}
	\label{control}
There exists a positive constant $C$ such that
$$
\left|\int_{\R^N}V_\delta(x_0+\eps x)R^2(x)-mV_\delta(x_0)\right|\leq C\eps^2\phi(\delta),\qquad
\forall x_0\in\R^N,\,\,\forall \eps\in (0,1],\,\,\forall \delta\in (0,1].
$$
\end{lemma}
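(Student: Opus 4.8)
The plan is to exploit the mass identity $\int_{\R^N} R^2(x)\,dx = m$ from \eqref{masscons} to absorb the term $mV_\delta(x_0)$ into the integral, so that the quantity to be estimated becomes
$$
\int_{\R^N} \big[ V_\delta(x_0+\eps x) - V_\delta(x_0)\big]\, R^2(x)\,dx,
$$
and then to perform a second-order Taylor expansion of $V_\delta$ about $x_0$. Since $V_\delta\in C^3(\R^N,\R^+)$, for each fixed $x$ there is a point $\zeta_\eps(x)=x_0+\theta\eps x$, with $\theta=\theta(x)\in(0,1)$, such that
$$
V_\delta(x_0+\eps x) - V_\delta(x_0) = \eps\, \nabla V_\delta(x_0)\cdot x + \frac{\eps^2}{2}\, \big(\nabla^2 V_\delta(\zeta_\eps(x))\, x\big)\cdot x.
$$

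First I would dispose of the linear term. Because $R$ is radial, $R^2$ is even and $x\mapsto \big(\nabla V_\delta(x_0)\cdot x\big)\, R^2(x)$ is odd, so that $\int_{\R^N}\nabla V_\delta(x_0)\cdot x\, R^2(x)\,dx = 0$. This cancellation of the first moment is the one structural observation that makes the estimate quadratic rather than merely linear in $\eps$. There remains only the quadratic remainder, which I would bound crudely by
$$
\frac{\eps^2}{2}\int_{\R^N} \big| \big(\nabla^2 V_\delta(\zeta_\eps(x))\,x\big)\cdot x\big|\, R^2(x)\,dx \le \frac{\eps^2}{2}\,\|\nabla^2 V_\delta\|_{L^\infty}\int_{\R^N}|x|^2 R^2(x)\,dx.
$$

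It then remains to control the two factors. The supremum of the Hessian is dominated by the full derivative norm, $\|\nabla^2 V_\delta\|_{L^\infty}\le \sum_{|\alpha|=2}\|D^\alpha V_\delta\|_{L^\infty}\le \phi(\delta)$, which is finite under the standing assumption on the family $(V_\delta)$. The weight $\int_{\R^N}|x|^2 R^2(x)\,dx$ is finite as well: by the decay property \eqref{prop-R}, $R(x)$ behaves like $|x|^{-(N-1)/2}e^{-|x|}$ at infinity, so $|x|^2 R^2(x)$ is integrable. Setting $C := \tfrac{1}{2}\int_{\R^N}|x|^2 R^2(x)\,dx$, a quantity depending only on $R$ and in particular independent of $x_0$, $\eps$ and $\delta$, yields the asserted bound uniformly in $x_0\in\R^N$, $\eps\in(0,1]$ and $\delta\in(0,1]$. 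I do not expect a genuine obstacle here: the entire content of the lemma is the vanishing of the first moment through radial symmetry together with the uniform Hessian bound $\phi(\delta)$, and the remaining steps are routine.
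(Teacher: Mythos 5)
Your proof is correct and follows essentially the same route the paper relies on: the paper states Lemma~\ref{control} without proof, deferring to \cite[Lemma 3.3]{keraa}, whose argument is exactly this second-order Taylor expansion with the linear term annihilated by the radial symmetry of $R$, the Hessian bounded by $\phi(\delta)$, and the second moment of $R^2$ finite by the exponential decay \eqref{prop-R}. The identical computation appears explicitly in the paper's own proof of the analogous Lemma~\ref{stima-en-in-2}, so there is nothing to add.
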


\begin{lemma}
	\label{gradbound}
Let $u^{\eps,\delta}$ be the unique solution to \eqref{problem-semis}-\eqref{initidata}. Assume that for the initial position $x_0\in\R^N$
\begin{equation}
	\label{nonsingstart}
\sup_{\delta\in (0,1]}V_\delta(x_0)<+\infty.
\end{equation}
There exists a positive constant $C$ such that
$$
\sup_{t\geq 0}\|\nabla u^{\eps,\delta}(t)\|_{L^2}^2\leq C\eps^{N-2}+C\eps^{N}\phi(\delta),\qquad
\forall \eps\in (0,1],\,\,\forall \delta\in (0,1].
$$
In particular, in light of \eqref{mainvanish}, there holds
\begin{equation}
	\label{boundgrad-situation}
\sup_{(\eps,\delta)\in {\mathscr V}}\sup_{t\geq 0} \eps^{2-N}\|\nabla u^{\eps,\delta}(t)\|_{L^2}^2<+\infty.		
\end{equation}	
\end{lemma}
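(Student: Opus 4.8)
The plan is to follow the same energy-dissipation strategy used in the proof of Lemma~\ref{stime-h1}, now tracking explicitly how every constant depends on $\eps$ and $\delta$. The conserved energy $E_{\eps,\delta}(t)=E_{\eps,\delta}(0)$ splits into gradient, potential and nonlinear contributions; since $V_\delta\geq 0$, at a generic time $t$ one may simply discard the potential term after bounding it below by zero, so the whole difficulty reduces to (i) absorbing the focusing nonlinearity into the gradient term and (ii) bounding the \emph{initial} energy $E_{\eps,\delta}(0)$ from above with the correct powers of $\eps$ and $\phi(\delta)$.

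For step (i) I would apply the Gagliardo--Nirenberg inequality~\eqref{gagl-nir} with $q=2p+2$ together with the mass conservation~\eqref{masscons}, exactly as in Lemma~\ref{stime-h1}. Writing $X:=\eps^{2-N}\|\nabla u^{\eps,\delta}(t)\|_{L^2}^2$, this yields $\frac{1}{(p+1)\eps^N}\|u^{\eps,\delta}(t)\|_{L^{2p+2}}^{2p+2}=\const(p)\,m^{1+p(1-N/2)}\,X^{pN/2}$. The strict inequality $p<2/N$ makes the exponent $pN/2<1$, so Young's inequality gives an estimate of the form $\const(p)\,m^{1+p(1-N/2)}\,X^{pN/2}\leq \tfrac14 X+\const(p)$, with an additive constant independent of $\eps,\delta$. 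Feeding this bound and $V_\delta\geq 0$ into the conserved energy produces
\[
\tfrac14\,\eps^{2-N}\|\nabla u^{\eps,\delta}(t)\|_{L^2}^2\leq E_{\eps,\delta}(0)+\const(p),\qquad t\geq 0.
\]

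It then remains to estimate $E_{\eps,\delta}(0)$. Inserting the bump initial datum~\eqref{initidata} and rescaling $y=(x-x_0)/\eps$, the gradient and nonlinear parts reduce to $\eps$- and $\delta$-independent constants: since the modulating phase contributes only the term $|\xi_0|^2R^2$, one has $\int|\nabla u(0)|^2=\eps^{N-2}\int(|\nabla R|^2+|\xi_0|^2R^2)$ and $\int|u(0)|^{2p+2}=\eps^N\int R^{2p+2}$, so that $\tfrac{1}{2}\eps^{2-N}\|\nabla u(0)\|_{L^2}^2$ and $\tfrac{1}{(p+1)\eps^N}\|u(0)\|_{L^{2p+2}}^{2p+2}$ depend only on $R$ and $\xi_0$. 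The sole $\delta$-dependent term is the potential energy $\eps^{-N}\int V_\delta(x)R^2((x-x_0)/\eps)\,dx=\int V_\delta(x_0+\eps y)R^2(y)\,dy$, which Lemma~\ref{control} bounds by $mV_\delta(x_0)+\const\,\eps^2\phi(\delta)$. Using the non-singular-start hypothesis~\eqref{nonsingstart} to bound $mV_\delta(x_0)$ uniformly in $\delta$, one obtains $E_{\eps,\delta}(0)\leq \const+\const\,\eps^2\phi(\delta)$. Combining this with the displayed gradient bound and multiplying through by $\eps^{N-2}$ yields $\|\nabla u^{\eps,\delta}(t)\|_{L^2}^2\leq C\eps^{N-2}+C\eps^N\phi(\delta)$, where the normalization $\phi(\delta)\geq 1$ lets me fold every additive constant into the stated two terms. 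Finally, dividing by $\eps^{N-2}$ and invoking the first (uniform-boundedness) condition in~\eqref{mainvanish} gives~\eqref{boundgrad-situation}. I expect the only delicate point to be the clean bookkeeping of the $\eps,\delta$ powers---in particular making sure the additive constant from Young's inequality and from the $\eps$-independent initial terms is genuinely uniform in $\delta$, which is precisely where Lemma~\ref{control} and hypothesis~\eqref{nonsingstart} are indispensable.
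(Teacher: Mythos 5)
Your proposal is correct and follows essentially the same route as the paper: conservation of energy with $V_\delta\geq 0$ to discard the potential at time $t$, Lemma~\ref{control} together with \eqref{nonsingstart} to bound $E_{\eps,\delta}(0)$ by $C+C\eps^2\phi(\delta)$, and Gagliardo--Nirenberg plus mass conservation plus Young's inequality (with $pN/2<1$) to absorb the nonlinear term into the gradient term. The only cosmetic differences are that you work with the rescaled quantity $\eps^{2-N}\|\nabla u^{\eps,\delta}(t)\|_{L^2}^2$ from the start and that you track the phase contribution $|\xi_0|^2R^2$ in the initial gradient energy explicitly (the paper silently absorbs it into the constant); also note your Gagliardo--Nirenberg step should read as an inequality rather than an equality, which is immaterial to the argument.
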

\begin{proof}
	Taking into account that $V_\delta(x)\geq 0$ for all $x\in\R^N$ and $\delta>0$, by the conservation of energy and 
	using Lemma~\ref{control} and assumption \eqref{nonsingstart}, it follows that
\begin{align*}
&	\frac{1}{2\eps^{N-2}}\int_{\R^N}|\nabla u^{\eps,\delta}(t)|^2
	-\frac{1}{\eps^{N}(p+1)}\int_{\R^N}|u^{\eps,\delta}(t)|^{2p+2}\leq E_{\eps,\delta}(0) \\
	& =\frac{1}{2}\int_{\R^N}|\nabla R|^2
	+\int_{\R^N}V_\delta(x_0+\eps x)R^2(x)
	-\frac{1}{p+1}\int_{\R^N}R^{2p+2}(x) \\
	& \leq\frac{1}{2}\int_{\R^N}|\nabla R|^2
	+mV_\delta(x_0)+C\eps^2\phi(\delta)
	-\frac{1}{p+1}\int_{\R^N}R^{2p+2}(x) 
	 \leq C+C\eps^2\phi(\delta),
\end{align*}
yielding in turn
$$
\|\nabla u^{\eps,\delta}(t)\|_{L^2}^2\leq C\eps^{N-2}+C\eps^N\phi(\delta)+\frac{C}{\eps^{2}}\|u^{\eps,\delta}(t)\|^{2p+2}_{2p+2},
$$
for all $t\in [0,\infty)$ and for any $\eps\in (0,1]$ and $\delta\in (0,1]$. Set $\theta=Np/(2p+2)$. By the conservation of
mass \eqref{masscons} it holds $\|u^{\eps,\delta}(t)\|_{L^2}=C\eps^{N/2}$ for all $t\in [0,\infty)$ and for any $\eps,\delta\in (0,1]$. 
Then, by virtue of the Gagliardo-Nirenberg inequality, it follows
$\|u^{\eps,\delta}(t)\|_{2p+2}\leq C\eps^{(1-\theta)N/2}\|\nabla u^{\eps,\delta}(t)\|_{L^2}^\theta$ for all $t\in [0,\infty)$ and any 
$\eps,\delta\in (0,1]$. By the definition of $\theta$ and Young's inequality we reach
$$
\frac{C}{\eps^{2}}\|u^{\eps,\delta}(t)\|^{2p+2}_{2p+2}\leq
C\eps^{N-2}+\frac{1}{2}\|\nabla u^{\eps,\delta}(t)\|_{L^2}^2,
$$ 
for all $t\geq 0,$ which immediately concludes the proof.
\end{proof}

\noindent
The solution $u^{\eps,\delta}$ enjoys the following energy splitting.

\begin{lemma}
	\label{split1}
Let $u^{\eps,\delta}$ be the unique solution to \eqref{problem-semis}-\eqref{initidata}.
There exists a positive constant $C$ such that
\begin{equation*}
E_{\eps,\delta}(u^{\eps,\delta}(t))={\mathscr E}(R)+m{\mathscr H}_\delta(t)+C\eps^2\phi(\delta),
\end{equation*}
for all $t\in [0,\infty)$ and for any $\eps\in (0,1]$ and $\delta\in (0,1]$. 
\end{lemma}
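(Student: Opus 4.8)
The plan is to reduce the time-dependent identity to a single computation at $t=0$ and then propagate it using two conservation laws. Indeed, the energy $E_{\eps,\delta}$ is conserved along the flow of~\eqref{problem-semis}, so $E_{\eps,\delta}(u^{\eps,\delta}(t))=E_{\eps,\delta}(0)$ for every $t$, while the Hamiltonian ${\mathscr H}_\delta$ is constant along the solution of the Newtonian system~\eqref{newton-semis}, so ${\mathscr H}_\delta(t)={\mathscr H}_\delta(0)=\frac12|\xi_0|^2+V_\delta(x_0)$ for all $t\geq 0$. Hence it suffices to establish
$$
E_{\eps,\delta}(0)=\EE(R)+m\,{\mathscr H}_\delta(0)+O(\eps^2\phi(\delta)),
$$
with a constant independent of $\eps,\delta,t$, and the statement for general $t$ follows at once.

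To evaluate $E_{\eps,\delta}(0)$ I would simply insert the explicit initial datum~\eqref{initidata} into the three terms of the energy and rescale by $y=(x-x_0)/\eps$. For the gradient term, the key observation is that $R$ is real-valued, so in $\nabla u^{\eps,\delta}(0)=\eps^{-1}[\nabla R+\im\xi_0 R]\,e^{\im\xi_0\cdot x/\eps}$ the real part $\nabla R$ and the imaginary part $\xi_0 R$ are orthogonal in the squared modulus and the cross term disappears. After the change of variables this gives
$$
\frac{1}{2\eps^{N-2}}\int_{\R^N}|\nabla u^{\eps,\delta}(0)|^2=\frac12\int_{\R^N}|\nabla R|^2+\frac12\,m\,|\xi_0|^2,
$$
where $m=\|R\|_{L^2}^2$. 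For the nonlinear term the phase factor drops out entirely and the rescaling yields $\frac{1}{(p+1)\eps^N}\int_{\R^N}|u^{\eps,\delta}(0)|^{2p+2}=\frac{1}{p+1}\int_{\R^N}R^{2p+2}$. Combining these two contributions reproduces exactly $\EE(R)+\frac12 m|\xi_0|^2$.

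The only term not reproduced exactly is the potential one: after the same rescaling it equals $\int_{\R^N}V_\delta(x_0+\eps y)R^2(y)\,dy$, and here I would invoke Lemma~\ref{control}, which bounds its discrepancy from $m\,V_\delta(x_0)$ by $C\eps^2\phi(\delta)$. Assembling the three pieces gives
$$
E_{\eps,\delta}(0)=\EE(R)+\tfrac12 m|\xi_0|^2+m\,V_\delta(x_0)+O(\eps^2\phi(\delta))=\EE(R)+m\,{\mathscr H}_\delta(0)+O(\eps^2\phi(\delta)),
$$
which, together with the conservation of $E_{\eps,\delta}$ and of ${\mathscr H}_\delta$, completes the proof. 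There is no genuine obstacle here: once Lemma~\ref{control} is available the argument is an elementary computation, and the only point demanding a little care is the cancellation of the cross term in the gradient, which rests on $R$ being real.
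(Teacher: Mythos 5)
Your proposal is correct and follows essentially the same route as the paper: both reduce to $t=0$ via conservation of $E_{\eps,\delta}$ and ${\mathscr H}_\delta$, compute the initial energy by inserting the datum \eqref{initidata}, and control the potential term with Lemma~\ref{control}. Your write-up is merely more explicit than the paper's (which states the resulting identity directly), in particular in spelling out the vanishing of the gradient cross term due to $R$ being real-valued.
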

\begin{proof}
	It is sufficient to observe that, by the conservation of energies $E_{\eps,\delta}$ and ${\mathscr H}_\delta$ and 
	taking into account Lemma~\ref{control}, we obtain
	\begin{equation*}
	E_{\eps,\delta}(u^{\eps,\delta}(t))
	= \frac{1}{2}m |\xi_0|^2+mV_\delta(x_0)+{\mathscr E}(R) + C\eps^2\phi(\delta) 
=m{\mathscr H}_\delta(t)   
	+{\mathscr E}(R)+ C\eps^2\phi(\delta),
\end{equation*}
for all $t\in [0,\infty)$ and for any $\eps\in (0,1]$ and $\delta\in (0,1]$. 
\end{proof}

\noindent
Following \cite{keraa}, let us now consider the auxiliary function
\begin{equation}
	\label{auxiliary}
\Psi_{\eps,\delta}(t,x):=e^{-\frac{\im}{\eps}(\eps x+x_\delta(t))\cdot\xi_\delta(t)}u^{\eps,\delta}(\eps x+x_\delta(t)).
\end{equation}
It is readily seen that $\|\Psi_{\eps,\delta}(t)\|_{L^2}^2=\|R\|_{L^2}^2$ for every $t\geq 0$ and 
\begin{equation}
	\label{es-identity}
{\mathscr E}(\Psi_{\eps,\delta}(t))=E_{\eps,\delta}(u^{\eps,\delta}(t))
-\frac{1}{\eps^N}\int_{\R^N}V_\delta(x)|u^{\eps,\delta}|^2+\frac{1}{2}m|\xi_\delta(t)|^2-\xi_\delta(t)\cdot\int_{\R^N}p_{\eps,\delta}(t,x),
\end{equation}
where $p_{\eps,\delta}:\R\times\R^N\to\R^N$ is the momentum defined by
$$
p_{\eps,\delta}(t,x):=\frac{1}{\eps^{N-1}}\Im(\bar u^{\eps,\delta}(t,x)\nabla u^{\eps,\delta}(t,x)),\qquad t\in\R,\,\, x\in\R^N.
$$

\begin{lemma}
	\label{mombound}
	There exists a positive constant $C$ such that
	$$
	\left|\int_{\R^N}p_{\eps,\delta}(t,x)\right|\leq C+C\eps\sqrt{\phi(\delta)},\qquad 
	\forall t\in [0,\infty),\,\,\,\forall \eps\in (0,1],\,\,\forall \delta\in (0,1].
	$$
	In particular, in light of \eqref{mainvanish}, there holds
	\begin{equation}
		\label{boundmom-situation}
	\sup_{(\eps,\delta)\in {\mathscr V}}\sup_{t\geq 0} \left|\int_{\R^N}p_{\eps,\delta}(t,x)\right|  <+\infty.		
	\end{equation}
\end{lemma}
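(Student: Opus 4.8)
The plan is to bound the momentum directly by the Cauchy--Schwarz inequality and then feed in the two quantities already under control: the mass from~\eqref{masscons} and the gradient bound from Lemma~\ref{gradbound}. First I would use the elementary inequality $\abs{\Im(\bar z w)}\le \abs{z}\,\abs{w}$ together with Cauchy--Schwarz to write
$$
\abs{\int_{\R^N} p_{\eps,\delta}(t,x)} \le \frac{1}{\eps^{N-1}}\int_{\R^N}\abs{u^{\eps,\delta}(t,x)}\,\abs{\nabla u^{\eps,\delta}(t,x)} \le \frac{1}{\eps^{N-1}}\,\norm{u^{\eps,\delta}(t)}_{L^2}\,\norm{\nabla u^{\eps,\delta}(t)}_{L^2}.
$$

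Next I would insert the two estimates. By the conservation of mass~\eqref{masscons} one has $\norm{u^{\eps,\delta}(t)}_{L^2}=\sqrt m\,\eps^{N/2}$ for every $t\ge 0$ and all $\eps,\delta\in(0,1]$, while Lemma~\ref{gradbound} yields $\norm{\nabla u^{\eps,\delta}(t)}_{L^2}^2\le C\eps^{N-2}+C\eps^N\phi(\delta)=C\eps^{N-2}(1+\eps^2\phi(\delta))$. Substituting, the powers of $\eps$ cancel exactly: the prefactor $\eps^{-(N-1)}$ is compensated by $\eps^{N/2}$ from the mass and $\eps^{(N-2)/2}$ from the gradient term, since $-(N-1)+\tfrac N2+\tfrac{N-2}{2}=0$. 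Hence
$$
\abs{\int_{\R^N} p_{\eps,\delta}(t,x)} \le \sqrt{mC}\,\sqrt{1+\eps^2\phi(\delta)} \le \sqrt{mC}\,\big(1+\eps\sqrt{\phi(\delta)}\,\big),
$$
where the last step uses $\sqrt{1+s^2}\le 1+s$ with $s=\eps\sqrt{\phi(\delta)}\ge 0$. Relabelling the constant, this is the claimed bound $C+C\eps\sqrt{\phi(\delta)}$, uniformly in $t\ge 0$ and in $\eps,\delta\in(0,1]$.

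Finally, the \virg{in particular} assertion~\eqref{boundmom-situation} follows immediately: by the first condition in~\eqref{mainvanish} the quantity $\eps^2\phi(\delta)$ is bounded on $\mathscr V$, so $\eps\sqrt{\phi(\delta)}=\sqrt{\eps^2\phi(\delta)}$ is bounded there as well, whence $\int_{\R^N}p_{\eps,\delta}(t,x)$ is bounded uniformly in $t\ge 0$ and in $(\eps,\delta)\in\mathscr V$. I expect no genuine obstacle here; the only delicate point is verifying that the exponent of $\eps$ indeed vanishes after substitution, which is exactly the scaling making the total momentum the correct $\eps$-uniform object, and that the single-power dependence on $\phi(\delta)$ (rather than $\phi(\delta)^{1/2}$ squared) comes out cleanly from the subadditivity of the square root applied to the gradient bound.
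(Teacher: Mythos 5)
Your proof is correct and follows essentially the same route as the paper: Cauchy--Schwarz applied to the momentum integral, followed by the conservation of mass~\eqref{masscons} and the gradient bound of Lemma~\ref{gradbound}, with the powers of $\eps$ cancelling exactly and $\sqrt{1+s^2}\le 1+s$ giving the stated form of the bound. The deduction of~\eqref{boundmom-situation} from the first condition in~\eqref{mainvanish} also matches the paper's argument.
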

\begin{proof}
Taking into account \eqref{masscons}, by H\"older inequality we get
\begin{equation*}
\left|\int_{\R^N}p_{\eps,\delta}(t,x)\right|\leq \int_{\R^N} 
\frac{|u^{\eps,\delta}(t,x)|}{\eps^{N/2}}\frac{|\nabla u^{\eps,\delta}(t,x)|}{\eps^{N/2-1}}	
\leq C\eps^{\frac{2-N}{2}}\|\nabla u^{\eps,\delta}(t)\|_{L^2}\leq C+C\eps\sqrt{\phi(\delta)}.
\end{equation*}
for all $t\in [0,\infty)$ and for any $\eps\in (0,1]$ and $\delta\in (0,1]$. 
Then, Lemma~\ref{gradbound} yields the assertion.
\end{proof}

\begin{lemma}
	\label{controlODE}
Assume that \eqref{nonsingstart} holds. Then, there holds
$$
\sup_{\delta\in (0,1]}\sup_{t\geq 0}|\xi_\delta(t)|<+\infty.
$$
\end{lemma}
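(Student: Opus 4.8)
The plan is to exploit the conservation of the Hamiltonian energy $\mathscr{H}_\delta$ along the trajectories of the Newtonian system~\eqref{newton-semis}, together with the sign condition on the potentials and the uniform bound~\eqref{nonsingstart} at the initial position. The whole argument is a direct reading of energy conservation, so I do not anticipate a genuine obstacle; the only points requiring a word of care are to apply the conserved quantity at the correct initial data and to use that each $V_\delta$ is nonnegative, which is part of the standing hypotheses on the family $(V_\delta)_{\delta\in(0,1]}$ (recall $V_\delta\in C^3(\R^N,\R^+)$).

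First I would recall that, since $(x_\delta(t),\xi_\delta(t))$ solves~\eqref{newton-semis}, the energy
$$
\mathscr{H}_\delta(t)=\tfrac12|\xi_\delta(t)|^2+V_\delta(x_\delta(t))
$$
is constant in time, hence equal to its initial value $\mathscr{H}_\delta(0)=\tfrac12|\xi_0|^2+V_\delta(x_0)$. Using now that $V_\delta$ takes values in $\R^+$, so that $V_\delta(x_\delta(t))\geq 0$, I would discard this term from the conserved identity to obtain the pointwise-in-time bound
$$
\tfrac12|\xi_\delta(t)|^2\leq \tfrac12|\xi_0|^2+V_\delta(x_0),\qquad \text{for all } t\geq 0 \text{ and } \delta\in(0,1].
$$

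Finally, taking the supremum over $t\geq 0$ and then over $\delta\in(0,1]$, and invoking assumption~\eqref{nonsingstart}, I would conclude
$$
\sup_{\delta\in(0,1]}\sup_{t\geq 0}|\xi_\delta(t)|^2\leq |\xi_0|^2+2\,\sup_{\delta\in(0,1]}V_\delta(x_0)<+\infty,
$$
which yields the assertion. The essential ingredients are thus exactly the conservation of $\mathscr{H}_\delta$ (guaranteed by the standing assumptions ensuring global solvability of~\eqref{newton-semis}), the nonnegativity $V_\delta\geq 0$, and the hypothesis that $V_\delta(x_0)$ remains bounded as $\delta\to 0^+$.
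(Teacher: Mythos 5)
Your proposal is correct and follows exactly the paper's own argument: conservation of the Hamiltonian $\mathscr{H}_\delta$ along solutions of~\eqref{newton-semis}, nonnegativity of $V_\delta$ to drop the potential term, and assumption~\eqref{nonsingstart} to bound $V_\delta(x_0)$ uniformly in $\delta$. As a minor aside, your bookkeeping of the factor $2$ is even slightly more careful than the paper's displayed identity, which writes $2\mathscr{H}_\delta(0)=|\xi_0|^2+V_\delta(x_0)$ where it should read $|\xi_0|^2+2V_\delta(x_0)$ (an immaterial slip in either case).
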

\begin{proof}
Since the energy functional ${\mathscr H}_\delta(t)=\frac{1}{2}|\xi_\delta(t)|^2+V_\delta(x_\delta(t))$
associated with~\eqref{newton-semis} remains constant, for any $t\geq 0$, taking into account that $V_\delta\geq 0$, there holds
$$
|\xi_\delta(t)|^2=2{\mathscr H}_\delta(t)-2V_\delta(x_\delta(t))\leq 2{\mathscr H}_\delta(t)=2{\mathscr H}_\delta(0)
=|\xi_0|^2+V_\delta(x_0)\leq C,
$$
where the last bound is due to \eqref{nonsingstart}. This proves the assertion.
\end{proof}

\begin{lemma}
	\label{controlODE2}
Assume that \eqref{nonsingstart} holds. Then
$$
\sup_{\delta\in (0,1]}\sup_{t\in [0,\phi(\delta)^{-1}]}|x_\delta(t)|<+\infty.
$$
\end{lemma}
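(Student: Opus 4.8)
The plan is to obtain the position bound as an immediate consequence of the uniform velocity bound already secured in Lemma~\ref{controlODE}, combined with the crucial normalization $\phi(\delta)\geq 1$, which confines the relevant time horizon $[0,\phi(\delta)^{-1}]$ to the fixed interval $[0,1]$ for every $\delta\in(0,1]$.

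First I would apply Lemma~\ref{controlODE} to produce a constant $C>0$, independent of both $t\geq 0$ and $\delta\in(0,1]$, satisfying $|\xi_\delta(t)|\leq C$. Since the first line of the Newtonian system~\eqref{newton-semis} reads $\dot x_\delta=\xi_\delta$, integrating from $0$ to $t$ gives
$$
x_\delta(t)=x_0+\int_0^t\xi_\delta(s)\,ds,
$$
and hence, by the triangle inequality and the velocity bound,
$$
|x_\delta(t)|\leq |x_0|+\int_0^t|\xi_\delta(s)|\,ds\leq |x_0|+C\,t,\qquad t\geq 0.
$$

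The final step exploits $\phi(\delta)\geq 1$, so that $\phi(\delta)^{-1}\leq 1$ for all $\delta\in(0,1]$; thus whenever $t\in[0,\phi(\delta)^{-1}]$ we have $t\leq 1$, and therefore
$$
|x_\delta(t)|\leq |x_0|+C\,\phi(\delta)^{-1}\leq |x_0|+C.
$$
Taking the supremum first over $t\in[0,\phi(\delta)^{-1}]$ and then over $\delta\in(0,1]$ yields a finite bound depending only on $|x_0|$ and $C$, as claimed.

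I do not expect any genuine obstacle: the whole argument reduces to the linear-growth estimate $|x_\delta(t)|\leq|x_0|+Ct$ for the position, with the time variable capped at length one by the standing normalization $\phi(\delta)\geq 1$. The only subtlety worth emphasizing is that uniformity in $\delta$ is not automatic from the differential equation alone but is precisely furnished by the $\delta$-independence of the velocity bound in Lemma~\ref{controlODE} together with the uniform cap $\phi(\delta)^{-1}\leq 1$. In particular, the interval $[0,\phi(\delta)^{-1}]$ shrinks as $\delta\to 0^+$, so the restriction to this time horizon is exactly what compensates for the lack of any a~priori global-in-time control on $x_\delta$.
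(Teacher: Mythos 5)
Your proof is correct and follows essentially the same route as the paper: both integrate $\dot x_\delta=\xi_\delta$, invoke the uniform velocity bound of Lemma~\ref{controlODE}, and use $t\leq\phi(\delta)^{-1}\leq 1$ (from the normalization $\phi(\delta)\geq 1$) to cap the linear growth. No gaps; your write-up just spells out the steps the paper compresses into a single chain of inequalities.
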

\begin{proof}
In light of Lemma~\ref{controlODE} and since $\phi(\delta)\geq 1$ there holds
$$
|x_\delta(t)|\leq |x_0|+\int_0^t |\xi_\delta(s)|ds\leq C+Ct\leq C+\frac{C}{\phi(\delta)}\leq C,
$$
for all $\delta\in (0,1]$ and $t\in [0,\phi(\delta)^{-1}]$, yielding the assertion.
\end{proof}

\noindent
We now recall \cite[Lemma 3.2]{keraa} the following
\begin{lemma}
	\label{deltalemm}
There exist $C_0>1$ and $K_0>0$ with $|\xi_2-\xi_1|\leq C_0\|\delta_{\xi_2}-\delta_{\xi_1}\|_{C^{2*}}$ if 
$\|\delta_{\xi_2}-\delta_{\xi_1}\|_{C^{2*}}\leq K_0$.	
\end{lemma}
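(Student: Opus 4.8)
The plan is to read $\delta_\xi$ as the Dirac mass at the point $\xi\in\R^N$, regarded as an element of the dual space $(C^2)^*$, so that $\|\mu\|_{C^{2*}}=\sup\{\langle\mu,\phi\rangle:\|\phi\|_{C^2}\le 1\}$ and in particular $\langle \delta_{\xi_2}-\delta_{\xi_1},\phi\rangle=\phi(\xi_2)-\phi(\xi_1)$. The trivial direction is the upper bound $\|\delta_{\xi_2}-\delta_{\xi_1}\|_{C^{2*}}\le|\xi_2-\xi_1|$, which follows from the mean value theorem since $\|\nabla\phi\|_\infty\le 1$; the substantive content is thus the asserted lower bound on the norm. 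I would obtain it by exhibiting, for each admissible pair $(\xi_1,\xi_2)$, a single test function $\phi$ with $\|\phi\|_{C^2}\le C_1$ and $\phi(\xi_2)-\phi(\xi_1)=|\xi_2-\xi_1|$, whence $\|\delta_{\xi_2}-\delta_{\xi_1}\|_{C^{2*}}\ge |\xi_2-\xi_1|/C_1$, and one may take $C_0=\max\{C_1,2\}>1$.

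First I would dispose of the far-apart regime, which is exactly where the threshold $K_0$ enters. Fixing $\eta\in C^\infty_0(B(0,1))$ with $\eta(0)=1$, setting $M:=\|\eta\|_{C^2}$, and fixing any $r_0\ge 1$, the rescaled translate $\phi(x):=\eta((x-\xi_1)/r_0)/M$ satisfies $\|\phi\|_{C^2}\le 1$ (because $r_0\ge1$ only shrinks the derivatives) and $\phi(\xi_1)-\phi(\xi_2)=1/M$ whenever $|\xi_2-\xi_1|\ge r_0$, since then $\eta((\xi_2-\xi_1)/r_0)=0$. Hence $\|\delta_{\xi_2}-\delta_{\xi_1}\|_{C^{2*}}\ge c_0:=1/M$ in that case. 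Choosing $K_0<c_0$, the hypothesis $\|\delta_{\xi_2}-\delta_{\xi_1}\|_{C^{2*}}\le K_0$ rules this out and forces $|\xi_2-\xi_1|<r_0$, so it remains only to treat the small-separation regime $|\xi_2-\xi_1|<r_0$.

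In that regime I would build the test function linearly. Writing $e:=(\xi_2-\xi_1)/|\xi_2-\xi_1|$, I set $\phi(x):=\psi_{\xi_1}(x)\,\big(e\cdot(x-\xi_1)\big)$, where $\psi_{\xi_1}(x):=\psi((x-\xi_1)/r_0)$ for a fixed-shape cutoff $\psi\in C^\infty_0(B(0,2))$ with $\psi\equiv 1$ on $B(0,1)$. Since $|\xi_2-\xi_1|<r_0$, both points lie in $B(\xi_1,r_0)$, where $\psi_{\xi_1}\equiv 1$, so that $\phi(\xi_1)=0$ and $\phi(\xi_2)=e\cdot(\xi_2-\xi_1)=|\xi_2-\xi_1|$. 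On $\mathrm{supp}\,\psi_{\xi_1}\subset B(\xi_1,2r_0)$ the linear factor is bounded by $2r_0$, and the product rule gives bounds on $\phi,\nabla\phi,\nabla^2\phi$ purely in terms of $\|\psi\|_{C^2}$ and $r_0$, yielding $\|\phi\|_{C^2}\le C_1$ with $C_1$ depending only on $N,r_0,\|\psi\|_{C^2}$ and, crucially, independent of $\xi_1$ and of the direction $e$ by translation and rotation invariance. This produces $\|\delta_{\xi_2}-\delta_{\xi_1}\|_{C^{2*}}\ge |\xi_2-\xi_1|/C_1$ and hence the claim.

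The main obstacle is to keep the constant $C_1$ genuinely uniform in $(\xi_1,\xi_2)$: the only temptation to lose uniformity is to shrink the cutoff scale with $|\xi_2-\xi_1|$, which would blow up $\|\nabla^2\phi\|_\infty$. Fixing the scale once and for all at $r_0$, the same $r_0$ used in the separation step, and restricting to $|\xi_2-\xi_1|<r_0$, removes this danger, since then every term arising from differentiating $\psi_{\xi_1}\,(e\cdot(x-\xi_1))$ is controlled by $\|\psi\|_{C^2}$, $r_0$ and the bound $|e\cdot(x-\xi_1)|\le 2r_0$ on the support, all independent of the chosen pair and direction.
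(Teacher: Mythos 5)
Your proof is correct, but there is nothing in the paper to compare it against: the paper does not prove this lemma at all, it recalls it verbatim from Keraani~\cite[Lemma 3.2]{keraa}. Your two-regime duality argument is essentially the standard proof underlying the cited result: a fixed bump at scale $r_0$ shows that well-separated Dirac masses have $C^{2*}$-distance bounded below by a universal constant $c_0$, which is exactly what the threshold $K_0<c_0$ excludes, and then the test function $\psi_{\xi_1}(x)\,\big(e\cdot(x-\xi_1)\big)$ at the same fixed scale gives the linear lower bound in the remaining regime $|\xi_2-\xi_1|<r_0$. The point you rightly emphasize is the one place a naive argument fails: the cutoff scale must stay fixed (tying it to $|\xi_2-\xi_1|$ blows up $\|\nabla^2\phi\|_\infty$), and the test function must contain a genuinely linear factor --- a radial choice such as $h(|x-\xi_1|^2)$ would only yield a quadratic lower bound $c\,|\xi_2-\xi_1|^2$, which is too weak for the stated inequality. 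What the citation buys the paper is brevity; what your argument buys is a self-contained proof with explicit constants, e.g.\ $K_0=c_0/2$ and $C_0=\max\{C_1,2\}>1$, uniform in $\xi_1$, $\xi_2$ by translation and rotation invariance.
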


\noindent
On account of \eqref{es-identity} and Lemma~\ref{split1}, for the family $\Psi_{\eps,\delta}$ we have the following energy splitting
\begin{equation*}
{\mathcal E}(\Psi^{\eps,\delta}(t))-{\mathscr E}(R)=\xi_\delta(t)\cdot\Big(
m\xi_\delta(t)-\int_{\R^N}p_{\eps,\delta}(t,x)\Big)
+mV_\delta(x_\delta(t))-\frac{1}{\eps^N}\int_{\R^N}V_\delta(x)|u^{\eps,\delta}(t,x)|^2
+C\eps^2\phi(\delta),
\end{equation*}
for all $t\in [0,\infty)$ and for any $\eps\in (0,1]$ and $\delta\in (0,1]$. 
We shall now set
$$
\eta_1^{\eps,\delta}(t):=m\xi_\delta(t)-\int_{\R^N}p_{\eps,\delta}(t,x),\qquad
\eta_2^{\eps,\delta}(t):=mV_\delta(x_\delta(t))-\frac{1}{\eps^N}\int_{\R^N}V_\delta(x)|u^{\eps,\delta}(t,x)|^2,
$$
Furthermore, if $C_0,K_0$ are the constants in Lemma~\ref{deltalemm}, let us set
$$
M:=\sup_{\delta \in (0,1]}\sup_{t\in [0,\phi(\delta)^{-1}]} C_0|x_\delta(t)|+C_0K_0.
$$
In light of Lemma~\ref{controlODE2}, $M>0$ is finite. Of course $|x_\delta(t)|\leq M$, for 
every $\delta\in (0,1]$ and $t\in [0,\phi(\delta)^{-1}]$. We denote by $\chi$ a cut-off
function such that $\chi=1$ on $|x|\leq M$ and $\chi=0$ on $|x|\geq 2M$. Finally, also set
$$
\eta_3^{\eps,\delta}(t):=m x_\delta(t)-\frac{1}{\eps^N}\int_{\R^N}x\chi(x)|u^{\eps,\delta}(t,x)|^2,
$$
for every $t\geq 0$.
Taking into account Lemma~\ref{controlODE}, we finally achieve the following

\begin{lemma}
	\label{split2}
	Let $u^{\eps,\delta}$ be the unique solution to problem \eqref{problem-semis}-\eqref{initidata} and
	let $\Psi^{\eps,\delta}$ the function defined in formula \eqref{auxiliary}.
	Furthermore, let us set $\eta^{\eps,\delta}(t)=|\eta_1^{\eps,\delta}(t)|+|\eta_2^{\eps,\delta}(t)|
	+|\eta_3^{\eps,\delta}(t)|$. Then there exists a positive constant $C$ such that	
$0\leq {\mathcal E}(\Psi^{\eps,\delta}(t))-{\mathscr E}(R)\leq C\eta^{\eps,\delta}(t)+C\eps^2\phi(\delta)$,
for every $t\geq 0$.
\end{lemma}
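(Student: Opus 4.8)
The plan is to read both inequalities directly off the energy-splitting identity displayed just before the statement, which, recalling the definitions of $\eta_1^{\eps,\delta}$ and $\eta_2^{\eps,\delta}$, reads
\begin{equation*}
{\mathcal E}(\Psi^{\eps,\delta}(t))-{\mathscr E}(R)=\xi_\delta(t)\cdot\eta_1^{\eps,\delta}(t)+\eta_2^{\eps,\delta}(t)+C\eps^2\phi(\delta),\qquad t\geq 0.
\end{equation*}
This identity is nothing but the combination of \eqref{es-identity} with Lemma~\ref{split1}, and it already carries the error term of order $\eps^2\phi(\delta)$.

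First I would prove the lower bound. Right after the definition \eqref{auxiliary} it was observed that $\|\Psi^{\eps,\delta}(t)\|_{L^2}^2=\|R\|_{L^2}^2=m$ for every $t\geq 0$, so $\Psi^{\eps,\delta}(t)$ is an admissible competitor in the constrained minimization problem solved by $R$, namely the minimization of the energy \eqref{energy-R} over the sphere $\{\,\|v\|_{L^2}=\sqrt{m}\,\}$. Since ${\mathcal E}$ is precisely that energy functional, there follows ${\mathcal E}(\Psi^{\eps,\delta}(t))\geq{\mathscr E}(R)$, i.e.\ $0\leq{\mathcal E}(\Psi^{\eps,\delta}(t))-{\mathscr E}(R)$; note that this step is independent of the splitting identity.

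For the upper bound I would estimate the two genuine terms on the right-hand side of the identity. By the Cauchy--Schwarz inequality $\xi_\delta(t)\cdot\eta_1^{\eps,\delta}(t)\leq|\xi_\delta(t)|\,|\eta_1^{\eps,\delta}(t)|$, and Lemma~\ref{controlODE}, valid under assumption \eqref{nonsingstart}, provides a constant $C$ with $|\xi_\delta(t)|\leq C$ uniformly in $\delta\in(0,1]$ and $t\geq 0$; hence this term is at most $C|\eta_1^{\eps,\delta}(t)|$. Trivially $\eta_2^{\eps,\delta}(t)\leq|\eta_2^{\eps,\delta}(t)|$, and enlarging the bound by the nonnegative quantity $|\eta_3^{\eps,\delta}(t)|$ and recalling $\eta^{\eps,\delta}(t)=|\eta_1^{\eps,\delta}(t)|+|\eta_2^{\eps,\delta}(t)|+|\eta_3^{\eps,\delta}(t)|$, I conclude ${\mathcal E}(\Psi^{\eps,\delta}(t))-{\mathscr E}(R)\leq C\eta^{\eps,\delta}(t)+C\eps^2\phi(\delta)$. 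There is no real obstacle here: every step invokes a result already established, and the only point to watch is that the bound on $|\xi_\delta(t)|$ be uniform in $\delta$ and $t$, which is exactly the content of Lemma~\ref{controlODE}.
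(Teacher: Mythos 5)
Your proof is correct and follows essentially the same route as the paper: the paper states the lemma immediately after the displayed splitting identity (obtained from \eqref{es-identity} and Lemma~\ref{split1}), invoking Lemma~\ref{controlODE} for the uniform bound on $|\xi_\delta(t)|$ and the constrained minimality of $R$ on the $L^2$-sphere for the lower bound, exactly as you do. The only point worth noting is that the term $C\eps^2\phi(\delta)$ in the identity should be read as an error whose absolute value is bounded by $C\eps^2\phi(\delta)$, which your argument handles correctly.
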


\begin{lemma}
The functions $\eta_i^{\eps,\delta}:[0,\infty)\to\R$, $i=1,2,3$ are continuous and 
$$
\eta_1^{\eps,\delta}(0)=0,
\qquad
|\eta_2^{\eps,\delta}(0)|\leq C\eps^2\phi(\delta),
\qquad
|\eta_3^{\eps,\delta}(0)|\leq C\eps^2,
$$
for some $C>0$ and for any $\eps\in (0,1]$ and $\delta\in (0,1]$.
\end{lemma}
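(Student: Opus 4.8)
My plan is to obtain the three initial identities by explicit computation on the bump-like datum \eqref{initidata}, and to treat continuity separately as a soft consequence of the solution's regularity. In every case the first move is the change of variables $y=(x-x_0)/\eps$, which converts the $\eps^{-N}$-weighted integrals into $O(1)$ integrals against $R^2$; I combine it throughout with $x_\delta(0)=x_0$, $\xi_\delta(0)=\xi_0$ and the mass normalization $m=\|R\|_{L^2}^2$ from \eqref{masscons}.

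For $\eta_1^{\eps,\delta}(0)$ I would compute the momentum density at $t=0$ directly. Since $R$ is real-valued, writing $u^{\eps,\delta}(0,x)=R((x-x_0)/\eps)e^{\frac{\im}{\eps}\xi_0\cdot x}$ gives $\Im(\bar u^{\eps,\delta}(0)\nabla u^{\eps,\delta}(0))=\eps^{-1}R^2((x-x_0)/\eps)\xi_0$, the gradient-of-$R$ contribution being purely real. Integrating and rescaling yields $\int_{\R^N}p_{\eps,\delta}(0,x)=\xi_0\|R\|_{L^2}^2=m\xi_0$, which cancels $m\xi_\delta(0)=m\xi_0$ exactly, so $\eta_1^{\eps,\delta}(0)=0$. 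For $\eta_2^{\eps,\delta}(0)$ the modulus is $|u^{\eps,\delta}(0)|^2=R^2((x-x_0)/\eps)$, and the same rescaling reduces the claim to bounding $|mV_\delta(x_0)-\int_{\R^N}V_\delta(x_0+\eps y)R^2(y)\,dy|$, which is exactly the content of Lemma~\ref{control}.

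The estimate for $\eta_3^{\eps,\delta}(0)$ is the only step requiring care, and I expect it to be the main (albeit mild) obstacle. After rescaling, $\eta_3^{\eps,\delta}(0)=mx_0-\int_{\R^N}(x_0+\eps y)\chi(x_0+\eps y)R^2(y)\,dy$. Splitting $\chi=1-(1-\chi)$ and using $\int_{\R^N}yR^2(y)\,dy=0$ (odd integrand, as $R$ is radial) together with $m=\int R^2$, the principal part $\int_{\R^N}(x_0+\eps y)R^2(y)\,dy$ equals $mx_0$ and cancels, leaving $\eta_3^{\eps,\delta}(0)=\int_{\R^N}(x_0+\eps y)(1-\chi(x_0+\eps y))R^2(y)\,dy$. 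By the construction of $M$, with $C_0>1$ and $K_0>0$, one has $|x_0|=|x_\delta(0)|\le M/C_0-K_0<M$, so there is a gap $d:=M-|x_0|>0$; since $\chi\equiv 1$ on $\{|x|\le M\}$, the integrand is supported on $\{|x_0+\eps y|>M\}\subset\{|y|>d/\eps\}$. The exponential decay \eqref{prop-R} then bounds $\int_{|y|>d/\eps}(|x_0|+\eps|y|)R^2(y)\,dy$ by $\const\,e^{-c/\eps}\le \const\,\eps^2$ for $\eps$ small, giving $|\eta_3^{\eps,\delta}(0)|\le C\eps^2$. Two ingredients are essential here: the radial symmetry of $R$, which annihilates the $O(\eps)$ term, and the strict separation $|x_0|<M$, which forces the cut-off error into the exponentially small tail of $R^2$.

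Finally, continuity of each $\eta_i^{\eps,\delta}$ on $[0,\infty)$ follows from the continuity of the flow $t\mapsto(x_\delta(t),\xi_\delta(t))$ of \eqref{newton-semis}, the smoothness and boundedness of $V_\delta$ and $\chi$, and the regularity $u^{\eps,\delta}\in C^0(\R,H^1(\R^N))$. The quadratic forms $t\mapsto\int V_\delta|u^{\eps,\delta}(t)|^2$ and $t\mapsto\int x\chi(x)|u^{\eps,\delta}(t)|^2$ are continuous because $\||u(t)|^2-|u(s)|^2\|_{L^1}\le(\|u(t)\|_{L^2}+\|u(s)\|_{L^2})\|u(t)-u(s)\|_{L^2}$ while $V_\delta$ and $x\chi(x)$ are bounded (the latter since $\chi$ has compact support), and the momentum $t\mapsto\int p_{\eps,\delta}(t,x)$ is continuous by the analogous estimate in which the gradient factor is controlled using $H^1$-continuity of $t\mapsto u^{\eps,\delta}(t)$. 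Since the ODE terms $m\xi_\delta(t)$, $mV_\delta(x_\delta(t))$ and $mx_\delta(t)$ are manifestly continuous, the continuity of all three $\eta_i^{\eps,\delta}$ follows.
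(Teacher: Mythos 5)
Your proposal is correct, and for $\eta_1^{\eps,\delta}(0)$ and $\eta_2^{\eps,\delta}(0)$ it is exactly the paper's argument: the momentum of the real profile $R$ modulated by $e^{\frac{\im}{\eps}\xi_0\cdot x}$ integrates to $m\xi_0$, and the $\eta_2$ bound is a direct application of Lemma~\ref{control} after the rescaling $y=(x-x_0)/\eps$. The divergence is in $\eta_3^{\eps,\delta}(0)$ and in the continuity claim. The paper handles $\eta_3$ by the same mechanism as Lemma~\ref{control}, i.e.\ by running the second-order Taylor expansion argument on the function $x\mapsto x\chi(x)$ (which is $C^2$ with bounded Hessian): the zeroth-order term gives $m\,x_0\chi(x_0)=mx_0$ because $|x_0|\le M$, the linear term vanishes by radial symmetry of $R$, and the remainder is $O\big(\eps^2\,\|D^2(x\chi)\|_{L^\infty}\int|y|^2R^2\big)$. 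You instead split $\chi=1-(1-\chi)$, cancel the principal part exactly (again using oddness of $y\,R^2(y)$), and confine the cut-off error to the region $|y|\ge (M-|x_0|)/\eps$, where the decay \eqref{prop-R} makes it $O(e^{-c/\eps})$. Both routes are sound: yours trades the Hessian bound on $x\chi$ for the strict gap $|x_0|<M$ (which you correctly extract from the definition of $M$, using $C_0>1$ and $K_0>0$) plus the exponential decay of $R$, and in fact yields an error far smaller than $\eps^2$. One cosmetic point: the lemma asserts the bound for every $\eps\in(0,1]$, not just small $\eps$; this is immaterial since $\eps\mapsto e^{-c/\eps}\eps^{-2}$ is bounded on $(0,1]$, so your constant can be adjusted. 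Finally, you actually prove the continuity of the $\eta_i^{\eps,\delta}$ (from the continuity of the Newtonian flow, $C^0(\R,H^1)$ regularity of $u^{\eps,\delta}$, and boundedness of $V_\delta$ and $x\chi$), a point which the paper's proof states but silently omits; this is a completion of the paper's argument rather than a divergence from it.
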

\begin{proof}
We easily get $\eta_1^{\eps,\delta}(0)=m\xi_0-\int_{\R^N}p_{\eps,\delta}(0,x)=m\xi_0-\xi_0\int_{\R^N}R^2(x)=0.$
Moreover, we have
\begin{equation*}
|\eta_2^{\eps,\delta}(0)| =\Big|mV_\delta(x_0)-\int_{\R^N}V_\delta(x_0+\eps x)R^2\Big|\leq C\eps^2\phi(\delta), \,\,
|\eta_3^{\eps,\delta}(0)| =\Big|m x_0-\int_{\R^N}(x_0+\eps x)\chi(x_0+\eps x)R^2\Big|\leq C\eps^2,
\end{equation*}
in light of Lemma~\ref{control}.
\end{proof}

\noindent
Let us introduce the time
\begin{equation}
	\label{TTimedef}
T^{\eps,\delta}:=\sup\{t\in \big[0,\phi(\delta)^{-1}\big]: \eta^{\eps,\delta}(s)\leq \mu,\,\,\text{for all $s\in (0,t)$}\},
\end{equation}
where, recalling \eqref{mainvanish}, $\mu>0$ is a sufficiently small positive constant such that 
\begin{equation}
	\label{TTimedef1}
\begin{cases}
C\eta^{\eps,\delta}(t)+C\eps^2\phi(\delta)\leq {\mathcal A},   & \\
\noalign{\vskip3pt}
\text{for every $t\in [0,T^{\eps,\delta})$ and all $(\eps,\delta)\in {\mathscr V}$ such that} & \\
\noalign{\vskip3pt}
\text{$0<\eps\leq\eps_0$ and $0<\delta\leq\delta_0$, where $\eps_0,\delta_0$ are small enough.}
\end{cases}
\end{equation}
being ${\mathcal A}>0$ the constant which appears in the statement of Proposition~\ref{weinstein} and $C>0$
is the constant which appears in the statement of Lemma~\ref{split2}.
\vskip4pt
\noindent
In this framework, by virtue of Proposition~\ref{weinstein}, we find
families of functions $\theta^{\eps,\delta}:[0,T^{\eps,\delta})\to[0,2\pi)$
and $\xi^{\eps,\delta}:[0,T^{\eps,\delta})\to\R^N$ such that
$$
\left\|\Psi^{\eps,\delta}(t)-e^{\im \theta^{\eps,\delta}}R(\cdot-\xi^{\eps,\delta})\right\|^2_{H^1}\leq C\eta^{\eps,\delta}(t)+C\eps^2\phi(\delta),
$$
for all $t\in [0,T^{\eps,\delta})$. Then, 
we get the following

\begin{lemma}
	\label{rappr-semis}
There exist families of functions $\theta^{\eps,\delta}:[0,T^{\eps,\delta})\to[0,2\pi)$
and $\xi^{\eps,\delta}:[0,T^{\eps,\delta})\to\R^N$ with
$$
\left\|u^{\eps,\delta}(t)-e^{\frac{\im}{\eps}(\xi_\delta(t)\cdot x+\vartheta^{\eps,\delta}(t))}
R\left(\frac{\cdot-x_\delta(t)}{\eps}+\xi^{\eps,\delta}\right)\right\|^2_{H^1_\eps}\leq C\eta^{\eps,\delta}(t)+C\eps^2\phi(\delta),
$$
for all $t\in [0,T^{\eps,\delta})$,
where $w^{\eps,\delta}:=x_\delta(t)-\eps \xi^{\eps,\delta}$ and $\vartheta^{\eps,\delta}(t):=\eps \theta^{\eps,\delta}(t)$.
\end{lemma}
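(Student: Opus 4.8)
The plan is to transfer the modulational--stability bound already recorded for $\Psi^{\eps,\delta}$ just above the statement to the genuine solution $u^{\eps,\delta}$, simply by undoing the rescaling--modulation change of variables \eqref{auxiliary}. Inverting that formula gives
$$
u^{\eps,\delta}(t,x)=e^{\frac{\im}{\eps}\xi_\delta(t)\cdot x}\,\Psi_{\eps,\delta}\Big(t,\tfrac{x-x_\delta(t)}{\eps}\Big),\qquad x\in\R^N .
$$
Since the translation in Proposition~\ref{weinstein} ranges over all of $\R^N$, I may write the near--minimizer as $e^{\im\theta^{\eps,\delta}(t)}R(\cdot+\xi^{\eps,\delta}(t))$, so that upon setting $\vartheta^{\eps,\delta}(t):=\eps\theta^{\eps,\delta}(t)$ and $w^{\eps,\delta}(t):=x_\delta(t)-\eps\xi^{\eps,\delta}(t)$ the profile in the statement is exactly the image of $e^{\im\theta^{\eps,\delta}(t)}R(\cdot+\xi^{\eps,\delta}(t))$ under the same map. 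As this map is linear, writing $h:=\Psi_{\eps,\delta}(t,\cdot)-e^{\im\theta^{\eps,\delta}(t)}R(\cdot+\xi^{\eps,\delta}(t))$, the quantity to be estimated is the $H^1_\eps$--norm of $x\mapsto e^{\frac{\im}{\eps}\xi_\delta(t)\cdot x}h\big(\tfrac{x-x_\delta(t)}{\eps}\big)$.

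The heart of the computation is the relation between the two norms under this transformation. With the substitution $y=\tfrac{x-x_\delta(t)}{\eps}$ the $L^2$ part rescales isometrically, $\tfrac{1}{\eps^N}\int_{\R^N}\big|e^{\frac{\im}{\eps}\xi_\delta(t)\cdot x}h(\tfrac{x-x_\delta(t)}{\eps})\big|^2\,dx=\|h\|_{L^2}^2$, whereas differentiating the phase factor yields, after the same substitution,
$$
\frac{1}{\eps^{N-2}}\int_{\R^N}\Big|\nabla\big[e^{\frac{\im}{\eps}\xi_\delta(t)\cdot x}\,h(\tfrac{x-x_\delta(t)}{\eps})\big]\Big|^2\,dx=\int_{\R^N}\big|\im\,\xi_\delta(t)\,h+\nabla h\big|^2 .
$$
Expanding the square and controlling the cross term $2\Re\int_{\R^N}\overline{\im\,\xi_\delta(t)\,h}\cdot\nabla h$ by Cauchy--Schwarz gives $\int_{\R^N}|\im\,\xi_\delta(t)h+\nabla h|^2\le 2(1+|\xi_\delta(t)|^2)\|h\|_{H^1}^2$, and therefore $\big\|u^{\eps,\delta}(t,\cdot)-(\text{profile})\big\|_{H^1_\eps}^2\le C(1+|\xi_\delta(t)|^2)\,\|h\|_{H^1}^2$.

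To conclude I would invoke Lemma~\ref{controlODE}, which yields $\sup_{\delta\in(0,1]}\sup_{t\ge 0}|\xi_\delta(t)|<+\infty$, so that the prefactor $1+|\xi_\delta(t)|^2$ is bounded by a constant independent of $\eps,\delta$ and $t$. Feeding in the Weinstein estimate $\|h\|_{H^1}^2\le C\eta^{\eps,\delta}(t)+C\eps^2\phi(\delta)$ stated just before the lemma then produces the asserted inequality, with $w^{\eps,\delta}$ recorded for later comparison with $x_\delta(t)$ but playing no role in the estimate itself. The computation is essentially the same one--liner as in Lemma~\ref{rappr}; the only point demanding attention is the cross term generated by the phase in the gradient, and it is precisely here that the uniform velocity bound of Lemma~\ref{controlODE} is indispensable, since without it the passage from the $H^1$--norm of $\Psi^{\eps,\delta}$ to the $H^1_\eps$--norm of $u^{\eps,\delta}$ would fail to be uniform in $\delta$.
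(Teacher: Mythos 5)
Your proposal is correct and is essentially the paper's own argument: the paper proves this lemma (like its main-body counterpart, Lemma~\ref{rappr}) simply by undoing the change of variables \eqref{auxiliary} in the Weinstein-type bound for $\Psi^{\eps,\delta}$ stated just before it, which is exactly what you do. Your write-up merely makes explicit the rescaling computation, the phase cross term, and the use of Lemma~\ref{controlODE} to keep the constant uniform in $\delta$ --- details the paper leaves implicit.
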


\noindent
We now aim to prove the following

\begin{lemma}
	\label{dual-estimate}
Let $u^{\eps,\delta}$ be the unique solution to \eqref{problem-semis}-\eqref{initidata}. Then
there exists a positive constant $C$ with
$$
\|\eps^{-N}u^{\eps,\delta}(\cdot,t)-m\delta_{x_\delta(t)}\|_{C^{2*}}+
\|p^{\eps,\delta}(\cdot,t)dx-m\xi_\delta(t)\delta_{x_\delta(t)}\|_{C^{2*}}
\leq C\eta^{\eps,\delta}(t)+C\eps^2\phi(\delta),
$$
for every $t\in [0,T^{\eps,\delta})$ and all $(\eps,\delta)\in {\mathscr V}$ such that 
$0<\eps\leq\eps_0$ and $0<\delta\leq\delta_0$.
\end{lemma}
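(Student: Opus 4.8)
The plan is to unwind the dual norm by testing against an arbitrary $\varphi\in C^2(\R^N)$ with $\norm{\varphi}_{C^2}\le 1$, so that the assertion reduces to the two scalar bounds
$$
\Big|\frac{1}{\eps^N}\int_{\R^N}\varphi(x)\,|u^{\eps,\delta}(t,x)|^2\,dx-m\,\varphi(x_\delta(t))\Big|\le C\eta^{\eps,\delta}(t)+C\eps^2\phi(\delta)
$$
and
$$
\Big|\int_{\R^N}\varphi(x)\,p_{\eps,\delta}(t,x)\,dx-m\,\xi_\delta(t)\,\varphi(x_\delta(t))\Big|\le C\eta^{\eps,\delta}(t)+C\eps^2\phi(\delta),
$$
uniformly for $t\in[0,T^{\eps,\delta})$, where the first density is read as $\eps^{-N}|u^{\eps,\delta}|^2$. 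The guiding principle is that the zeroth and first order behaviour of the Taylor expansion of $\varphi$ at $x_\delta(t)$ must be matched \emph{exactly} against the conserved mass and the defect $\eta_3^{\eps,\delta}$; merely invoking the $H^1_\eps$--closeness of Lemma~\ref{rappr-semis} would cost a square root of $\eta^{\eps,\delta}$ and destroy the linear dependence claimed on the right-hand side.

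For the mass identity I would write $\varphi(x)-\varphi(x_\delta(t))=\nabla\varphi(x_\delta(t))\cdot(x-x_\delta(t))+r(x)$. The constant part is killed by the conservation of mass \eqref{masscons}. The linear part is handled through $\eta_3^{\eps,\delta}(t)=m\,x_\delta(t)-\eps^{-N}\int x\chi(x)|u^{\eps,\delta}|^2$: inserting $\chi$ produces exactly $-\nabla\varphi(x_\delta(t))\cdot\eta_3^{\eps,\delta}(t)$, while the complementary piece carrying $1-\chi$, supported in $\{|x|\ge M\}$, is controlled by the uniform gap $M-|x_\delta(t)|\ge C_0K_0>0$ coming from Lemma~\ref{controlODE2} together with the exponential decay \eqref{prop-R} of $R$, hence is $O(\eps^2)$. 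The decisive term is the quadratic remainder $\eps^{-N}\int r\,|u^{\eps,\delta}|^2$, for which I would use the two-sided bound $|r(x)|\le\min\{\tfrac12|x-x_\delta(t)|^2,\,2\}$ furnished by $\norm{D^2\varphi}_{L^\infty}\le1$ and $\norm{\varphi}_{L^\infty}\le1$. Passing to the rescaled profile $\Psi^{\eps,\delta}$ via \eqref{auxiliary} and splitting the $y$-integral at radius $\sim\eps^{-1}$ is the heart of the matter: on $|y|\lesssim\eps^{-1}$ the weight $|y|^2$ is capped by $\eps^{-2}$, so that after the prefactor $\eps^2$ it is absorbed by $\norm{\Psi^{\eps,\delta}-e^{\im\theta^{\eps,\delta}}R(\cdot+\xi^{\eps,\delta})}_{L^2}^2\le C\eta^{\eps,\delta}+C\eps^2\phi(\delta)$ (Lemma~\ref{rappr-semis}) plus the genuinely finite second moment $\int|y|^2R^2<\infty$ of the profile; on $|y|\gtrsim\eps^{-1}$ the bound $|r|\le2$ reduces everything to the $L^2$ tail, again controlled by the same two quantities.

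I expect this capped-moment splitting to be the only genuinely delicate point, precisely because no a priori bound on the second moment of $u^{\eps,\delta}$ is available from the $H^1_\eps$ theory alone: the whole estimate hinges on matching the $\eps^2$ prefactor against the weight truncated at scale $\eps^{-2}$, thereby converting a would-be second moment into the $L^2$ smallness of the error and the exponential localization of $R$.

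For the momentum identity I would use the radial notation $u^{\eps,\delta}=|u^{\eps,\delta}|e^{\im S_{\eps,\delta}}$, so that $p_{\eps,\delta}=\eps^{-(N-1)}|u^{\eps,\delta}|^2\nabla S_{\eps,\delta}$, and pass to $\Psi^{\eps,\delta}=|\Psi^{\eps,\delta}|e^{\im\sigma}$, which gives $\nabla_x S_{\eps,\delta}=\eps^{-1}(\xi_\delta(t)+\nabla\sigma)$ and the identity $\int|\Psi^{\eps,\delta}|^2\nabla\sigma=-\eta_1^{\eps,\delta}(t)$. Freezing $\varphi$ at $x_\delta(t)$ then produces the main term $m\,\xi_\delta(t)\varphi(x_\delta(t))$, via the mass estimate just proved together with $|\xi_\delta(t)|\le C$ from Lemma~\ref{controlODE}, plus the contribution $-\varphi(x_\delta(t))\,\eta_1^{\eps,\delta}(t)$. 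The remaining increment $\int|\Psi^{\eps,\delta}|^2\nabla\sigma\,[\varphi(x_\delta(t)+\eps y)-\varphi(x_\delta(t))]$ is estimated by Cauchy--Schwarz, where the internal kinetic energy obeys $\int|\Psi^{\eps,\delta}|^2|\nabla\sigma|^2=2\big(\EE(\Psi^{\eps,\delta})-\EE(|\Psi^{\eps,\delta}|)\big)\le 2\big(\EE(\Psi^{\eps,\delta})-\EE(R)\big)\le C\eta^{\eps,\delta}+C\eps^2\phi(\delta)$ by Lemma~\ref{split2} and the minimality of $R$, while the second Cauchy--Schwarz factor is treated by the very same capped-moment splitting. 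Collecting the four contributions yields the two scalar bounds and hence the stated $C^{2*}$ estimate.
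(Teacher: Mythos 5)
Your proposal gets the right \emph{mechanism} for linear (rather than square--root) dependence on $\eta^{\eps,\delta}$ --- pairing a weight that is simultaneously $O(\eps)$--small on the soliton core and capped globally against the $L^2$--smallness of Lemma~\ref{rappr-semis}, then using Young's inequality --- and your momentum reductions (the identity $\int|\Psi^{\eps,\delta}|^2\nabla\sigma=-\eta_1^{\eps,\delta}(t)$ and the kinetic bound via $\EE(\Psi^{\eps,\delta})-\EE(|\Psi^{\eps,\delta}|)$ and Lemma~\ref{split2}) are correct. But there is a genuine gap: you center the whole analysis at the Newtonian point $x_\delta(t)$, whereas Lemma~\ref{rappr-semis} only gives closeness to a soliton centered at $w^{\eps,\delta}(t)=x_\delta(t)-\eps\,\xi^{\eps,\delta}$, and nothing in the definition \eqref{TTimedef} of $T^{\eps,\delta}$ controls the modulation parameter $\xi^{\eps,\delta}$ a priori. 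This unproven localization enters at two decisive points. First, in your capped--moment splitting the relevant profile is the \emph{translated} one, and $\int|y|^2R^2(y-\xi^{\eps,\delta})\,dy=\int|z|^2R^2(z)\,dz+m|\xi^{\eps,\delta}|^2$; after the prefactor $\eps^2$ this leaves $m|x_\delta(t)-w^{\eps,\delta}(t)|^2$, which is exactly the uncontrolled drift, not an $O(\eps^2)$ term. Second, your claim that the $(1-\chi)$ piece of the linear term is $O(\eps^2)$ ``by the exponential decay of $R$'' presupposes that the soliton sits uniformly inside $\{|x|<M\}$ --- again a statement about $w^{\eps,\delta}$, not about $x_\delta$ --- and moreover that piece pairs the unbounded weight $|x|$ on $\{|x|\ge M\}$ against $|u^{\eps,\delta}|^2$ (note also that $|r|\le 2$ is false: the remainder $r$ grows linearly). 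Since no moment bound on $u^{\eps,\delta}$ is available, $L^2$--closeness gives no control there, and any Cauchy--Schwarz fallback reintroduces precisely the square root you set out to avoid.

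This missing localization of the modulation center is what the paper's proof spends most of its length on, and it cannot be bypassed. The paper first proves the $C^{2*}$ estimate \eqref{primostep} centered at $w^{\eps,\delta}(t)$, following Keraani's Lemma 3.4: expanding the test function at the \emph{modulation} center makes the linear term vanish by the symmetry $\int zR^2(z)\,dz=0$, so only moments of the untranslated $R$ appear and no unbounded weight ever meets $|u^{\eps,\delta}|^2$. It then shows $|w^{\eps,\delta}(t)|\le M$ by a continuity--in--time bootstrap: the momentum bound of Lemma~\ref{mombound} makes $t\mapsto\eps^{-N}|u^{\eps,\delta}(t,\cdot)|^2dx$ Lipschitz in $C^{2*}$, so its increment over $[0,T^{\eps,\delta})\subset[0,\phi(\delta)^{-1}]$ is small, and Lemma~\ref{deltalemm} then prevents $w^{\eps,\delta}$ from jumping away from $w^{\eps,\delta}(0)=x_0$. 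Only with this in hand can $\eta_3^{\eps,\delta}$ be converted into the drift bound \eqref{pointcontro}, $|x_\delta(t)-w^{\eps,\delta}(t)|\le C\eta^{\eps,\delta}(t)+C\eps^2\phi(\delta)$, and the lemma follows from $\|\delta_{x_\delta(t)}-\delta_{w^{\eps,\delta}(t)}\|_{C^{2*}}\le|x_\delta(t)-w^{\eps,\delta}(t)|$ together with \eqref{primostep}. Smallness of $\eta_3^{\eps,\delta}$ alone cannot replace the bootstrap: the map $w\mapsto w\chi(w)$ is not injective, so a small first--moment defect is compatible with a center far outside the support of $\chi$; it is continuity from $t=0$ that selects the branch $w^{\eps,\delta}\approx x_\delta$. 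Without this ingredient, neither of your two scalar bounds can be closed.
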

\begin{proof}
Let $u^{\eps,\delta}$ be the unique solution to problem \eqref{problem-semis}-\eqref{initidata}. 
Then, in the spirit of \cite[Lemma 3.4]{keraa}, it is possible to prove	
that there exists a positive constant $C$, independent of $\eps$ and $\delta$, such that
\begin{equation}
	\label{primostep}
\|\eps^{-N}u^{\eps,\delta}(\cdot,t)dx-m\delta_{w^{\eps,\delta}(t)}\|_{C^{2*}}+
\|p^{\eps,\delta}(\cdot,t)dx-m\xi_\delta(t)\delta_{w^{\eps,\delta}(t)}\|_{C^{2*}}
\leq C\eta^{\eps,\delta}(t)+C\eps^2\phi(\delta),
\end{equation}
for every $t\in [0,T^{\eps,\delta})$ and all $(\eps,\delta)\in {\mathscr V}$ such that 
$0<\eps\leq\eps_0$ and $0<\delta\leq\delta_0$.	
Let us now prove that there exists $\mu>0$ and a positive constant $C$,
independent of $\eps$ and $\delta$, such that
\begin{equation}
	\label{pointcontro}
|x_\delta(t)-w^{\eps,\delta}(t)|\leq C\eta^{\eps,\delta}(t)+C\eps^2\phi(\delta),
\end{equation}
for every $t\in [0,T^{\eps,\delta})$ and all $(\eps,\delta)\in {\mathscr V}$ such that 
$0<\eps\leq\eps_0$ and $0<\delta\leq\delta_0$.
We follow the proof of \cite[Lemma 3.5]{keraa}. Assuming that $|w^{\eps,\delta}(t)|\leq M$
for every $t\in [0,T^{\eps,\delta})$ and all $(\eps,\delta)\in {\mathscr V}$ such that 
$0<\eps\leq\eps_0$ and $0<\delta\leq\delta_0$ (up to further reducing the size of $\delta_0$), 
the assertion follows, since by the definition of $\chi$ and \eqref{primostep},
\begin{align}
|x_\delta(t)-w^{\eps,\delta}(t)| & \leq \frac{1}{m}\Big|\int_{\R^N}x\chi(x)\frac{|u^{\eps,\delta}(t,x)|^2}{\eps^N}-m w^{\eps,\delta}(t)   \Big|+\frac{1}{m}\eta^{\eps,\delta}(t)  \label{perprimap}\\
& \leq C\|x\chi\|_{C^2}\|\eps^{-N}u^{\eps,\delta}(\cdot,t)dx-
m\delta_{w^{\eps,\delta}(t)}\|_{C^{2*}}+C\eta^{\eps,\delta}(t) 
\leq C\eta^{\eps,\delta}(t)+C\eps^2\phi(\delta), \notag
\end{align}
for every $t\in [0,T^{\eps,\delta})$ and all $(\eps,\delta)\in {\mathscr V}$ such that 
$0<\eps\leq\eps_0$ and $0<\delta\leq\delta_0$. Thus, it is left to show that $|w^{\eps,\delta}(t)|\leq M$
for all $t\in [0,T^{\eps,\delta})$ and $(\eps,\delta)\in {\mathscr V}$ such that 
$0<\eps\leq\eps_0$ and $0<\delta\leq\delta_0$, up to further reducing the size of $\delta_0$.
On account of Lemma~\ref{mombound}, and arguing as in \cite[p.183]{keraa} there exists a constant
$C$, independent of $\eps$ and $\delta$, such that for all $t_1,t_2\in [0,T^{\eps,\delta})$ with $t_1<t_2$
$$
\|\eps^{-N}u^{\eps,\delta}(\cdot,t_2)dx-\eps^{-N}u^{\eps,\delta}(\cdot,t_1)dx\|_{C^{2*}}\leq C|t_2-t_1|\leq \frac{2C}{\phi(\delta)},
$$
yielding in turn by \eqref{primostep} and the definition \eqref{TTimedef}-\eqref{TTimedef1} of $T^{\eps,\delta}$
\begin{equation*}
\|m\delta_{w^{\eps,\delta}(t_2)}-m\delta_{w^{\eps,\delta}(t_1)}\|_{C^{2*}}\leq 
C\Big[\eta^{\eps,\delta}(t_2)+\eta^{\eps,\delta}(t_1)+\eps^2\phi(\delta)+\frac{1}{\phi(\delta)}\Big]  
\leq C\mu+C\eps^2\phi(\delta)+\frac{C}{\phi(\delta)}.
\end{equation*}
Therefore, up to reducing the value of $\delta_0$, choosing $\mu>0$ sufficiently small
in the definition of $T^{\eps,\delta}$,
we have $\|\delta_{w^{\eps,\delta}(t_2)}-\delta_{w^{\eps,\delta}(t_1)}\|_{C^{2*}}\leq K_0$
for every $t\in [0,T^{\eps,\delta})$ and all $(\eps,\delta)\in {\mathscr V}$ such that 
$0<\eps\leq\eps_0$ and $0<\delta\leq\delta_0$, where $K_0$ is the constant appearing in 
the statement of Lemma~\ref{deltalemm}. By virtue of Lemma~\ref{deltalemm}, it holds
$|w^{\eps,\delta}(t_2)-w^{\eps,\delta}(t_1)|\leq C_0
\|\delta_{w^{\eps,\delta}(t_2)}-\delta_{w^{\eps,\delta}(t_1)}\|_{C^{2*}}\leq C_0K_0$. Since $w^{\eps,\delta}(0)=x_0$,
it follows $|w^{\eps,\delta}(t)|\leq C_0K_0+|x_0|\leq M$, yielding the desired conclusion.
As a consequence of \eqref{pointcontro}, there holds 
$\|\delta_{x_\delta(t)}-\delta_{w^{\eps,\delta}(t)}\|_{C^{2*}}
\leq |x_\delta(t)-w^{\eps,\delta}(t)|\leq C\eta^{\eps,\delta}(t)+C\eps^2\phi(\delta),$ 
for all $t\in [0,T^{\eps,\delta})$ and all $(\eps,\delta)\in {\mathscr V}$ such that 
$0<\eps\leq\eps_0$ and $0<\delta\leq\delta_0$. This yields the assertion by \eqref{primostep}. 
\end{proof}

\begin{lemma}
	\label{gronw}
$\eta^{\eps,\delta}(t)\leq C\eps^2\phi(\delta)$
for all $t\in [0,T^{\eps,\delta})$ and all $(\eps,\delta)\in {\mathscr V}$ with
$0<\eps\leq\eps_0$ and $0<\delta\leq\delta_0$.
\end{lemma}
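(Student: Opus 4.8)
The plan is to close the argument with a Gronwall estimate for the scalar quantity $\eta^{\eps,\delta}(t)=|\eta_1^{\eps,\delta}(t)|+|\eta_2^{\eps,\delta}(t)|+|\eta_3^{\eps,\delta}(t)|$ on the interval $[0,T^{\eps,\delta})$, the crucial point being that the admissible time horizon $\phi(\delta)^{-1}$ in \eqref{TTimedef} is tailored to absorb a growth rate proportional to $\phi(\delta)$. First I would record the initial datum: from the preceding lemma and the normalization $\phi(\delta)\geq 1$ one gets $\eta^{\eps,\delta}(0)\leq C\eps^2\phi(\delta)$.

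The core step is to differentiate the three quantities in time using the Newtonian system \eqref{newton-semis} together with the variational identities $\partial_t(\eps^{-N}|u^{\eps,\delta}|^2)=-\nabla\cdot p_{\eps,\delta}$ and $\tfrac{d}{dt}\int_{\R^N}p_{\eps,\delta}=-\eps^{-N}\int_{\R^N}|u^{\eps,\delta}|^2\nabla V_\delta$. Using $\dot x_\delta=\xi_\delta$, $\dot\xi_\delta=-\nabla V_\delta(x_\delta)$ and an integration by parts, a direct computation gives
$$
\frac{d}{dt}\eta_1^{\eps,\delta}=\frac{1}{\eps^N}\int_{\R^N}|u^{\eps,\delta}|^2\big[\nabla V_\delta(x)-\nabla V_\delta(x_\delta(t))\big],\qquad
\frac{d}{dt}\eta_2^{\eps,\delta}=\Big\langle m\xi_\delta(t)\delta_{x_\delta(t)}-p_{\eps,\delta}\,dx,\ \nabla V_\delta\Big\rangle,
$$
and componentwise $\tfrac{d}{dt}\eta_3^{\eps,\delta}=\langle m\xi_\delta(t)\delta_{x_\delta(t)}-p_{\eps,\delta}\,dx,\ \nabla(x\chi)\rangle$, where for $\eta_3$ one uses that $\chi\equiv 1$ near $x_\delta(t)$ (guaranteed by $|x_\delta(t)|\leq M$ on $[0,\phi(\delta)^{-1}]$, Lemma~\ref{controlODE2}) so that $\nabla(x\chi)(x_\delta(t))$ is the identity and the Dirac contributions cancel. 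Each right-hand side is thus a pairing of the defect measures $\eps^{-N}|u^{\eps,\delta}|^2dx-m\delta_{x_\delta(t)}$ and $p_{\eps,\delta}\,dx-m\xi_\delta(t)\delta_{x_\delta(t)}$, both controlled in the $C^{2*}$-norm by $C\eta^{\eps,\delta}(t)+C\eps^2\phi(\delta)$ thanks to Lemma~\ref{dual-estimate}, against fixed test functions. For $\eta_1$ and $\eta_2$ these test functions are $\nabla V_\delta(\cdot)-\nabla V_\delta(x_\delta(t))$ and $\nabla V_\delta$, whose $C^2$-norms are bounded by $\phi(\delta)$ (since $V_\delta\in C^3$ with $\|D^\alpha V_\delta\|_{L^\infty}\leq\phi(\delta)$), while for $\eta_3$ the test function $\nabla(x\chi)$ is independent of $\delta$ and has a fixed $C^2$-norm, so no factor $\phi(\delta)$ appears. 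Collecting the three bounds and using $\phi(\delta)\geq 1$ yields
$$
\frac{d}{dt}\eta^{\eps,\delta}(t)\leq C\phi(\delta)\big(\eta^{\eps,\delta}(t)+\eps^2\phi(\delta)\big),\qquad t\in[0,T^{\eps,\delta}).
$$

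Finally I would integrate in time and invoke Gronwall's lemma to obtain $\eta^{\eps,\delta}(t)\leq\big(C\eps^2\phi(\delta)+C\eps^2\phi(\delta)^2\,t\big)e^{C\phi(\delta)t}$. Since $t<T^{\eps,\delta}\leq\phi(\delta)^{-1}$, one has $\phi(\delta)\,t\leq 1$ and $\phi(\delta)^2\,t\leq\phi(\delta)$, so the exponential is bounded by $e^{C}$ and the linear-in-$t$ term by $C\eps^2\phi(\delta)$, giving the claimed estimate $\eta^{\eps,\delta}(t)\leq C\eps^2\phi(\delta)$ uniformly in $(\eps,\delta)\in{\mathscr V}$ with $0<\eps\leq\eps_0$, $0<\delta\leq\delta_0$.

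The routine part is the differentiation and the pairing bounds, which mirror \cite[Lemmas 3.4--3.6]{keraa}. The one genuinely delicate point—and the heart of the Appendix—is the matching of scales in the last step: the growth rate of the Gronwall inequality degenerates like $\phi(\delta)$ as the potential steepens, but this is exactly compensated by restricting the dynamics to the window $[0,\phi(\delta)^{-1}]$, so that $\phi(\delta)\,t$ stays bounded and the final constant remains independent of $\eps$ and $\delta$.
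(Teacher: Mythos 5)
Your proposal is correct and follows essentially the same route as the paper: you express $\tfrac{d}{dt}\eta_j^{\eps,\delta}$ as pairings of $\nabla V_\delta$ (resp.\ $\nabla(x\chi)$) against the mass and momentum defect measures, bound these via the $C^{2*}$ estimates of Lemma~\ref{dual-estimate} times $\|\nabla V_\delta\|_{C^2}\leq\phi(\delta)$ (with no $\phi(\delta)$ factor for $\eta_3$, exactly as in the paper's appeal to \cite[Lemma 3.6]{keraa}), and then close with Gronwall on the window $[0,\phi(\delta)^{-1}]$ so that $\phi(\delta)t\leq 1$ absorbs the degenerating growth rate. This matching of the $\phi(\delta)$ growth rate against the shortened time horizon is precisely the mechanism the paper uses.
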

\begin{proof}
	We have
	$$
	\eta^{\eps,\delta}(t)\leq C\eps^2\phi(\delta)+\int_0^t \sum_{j=1}^3\left|\frac{d}{dt}\eta^{\eps,\delta}_j(\sigma)\right|d\sigma
	$$
We recall that, as known, the following identities holds
$$
\int_{\R^N}\frac{\partial }{\partial t}p^{\eps,\delta}(t,x)=\frac{1}{\eps^N}\int_{\R^N}\nabla V_\delta (x)|u^{\eps,\delta}(t,x)|^2,\qquad\quad
\frac{\partial }{\partial t}\frac{|u^{\eps,\delta}(t,x)|^2}{\eps^N}=-{\rm div}_x p^{\eps,\delta}(t,x).
$$	
In turn, by Lemma~\ref{dual-estimate}, we have
\begin{align*}
\left|\frac{d}{dt}\eta_1^{\eps,\delta}(t)\right| &=\Big|m\dot\xi_\delta(t)+\frac{1}{\eps^N}\int_{\R^N}\nabla V_\delta (x)|u^{\eps,\delta}(t,x)|^2 \Big|\\
& \leq \|\nabla V_\delta\|_{C^2}\|\|\eps^{-N}u^{\eps,\delta}(\cdot,t)dx-m\delta_{x_\delta(t)}\|_{C^{2*}} 
\leq C\phi(\delta)\eta^{\eps,\delta}(t)+C\eps^2\phi^2(\delta),
\end{align*}
for every $t\in [0,T^{\eps,\delta})$ and all $(\eps,\delta)\in {\mathscr V}$ such that 
$0<\eps\leq\eps_0$ and $0<\delta\leq\delta_0$. Then, 
$$
\int_0^t\left|\frac{d}{dt}\eta^{\eps,\delta}_1(\sigma)\right|d\sigma
\leq C\phi(\delta)\int_0^t\eta^{\eps,\delta}(\sigma)d\sigma+\int_0^t C\eps^2\phi^2(\delta)
\leq C\phi(\delta)\int_0^t\eta^{\eps,\delta}(\sigma)d\sigma+C\eps^2\phi(\delta),
$$
since $t\leq T^{\eps,\delta}\leq \phi(\delta)^{-1}$. Analogously, again by Lemma~\ref{dual-estimate}, we have
\begin{align*}
\left|\frac{d}{dt}\eta_2^{\eps,\delta}(t)\right| 
 &=\left|\nabla V_\delta(x_\delta(t))\cdot m\xi_\delta(t)-\int_{\R^N}\nabla V_\delta(x)\cdot p^{\eps,\delta}(t,x)\right| \\
& \leq \|\nabla V_\delta\|_{C^2}\|\|p^{\eps,\delta}(\cdot,t)dx-m\xi_\delta(t)\delta_{x_\delta(t)}\|_{C^{2*}} 
\leq C\phi(\delta)\eta^{\eps,\delta}(t)+C\eps^2\phi^2(\delta).
\end{align*}
 Then, as $t\leq T^{\eps,\delta}\leq \phi(\delta)^{-1}$, we achieve
$$
\int_0^t\left|\frac{d}{dt}\eta^{\eps,\delta}_2(\sigma)\right|d\sigma
\leq C\phi(\delta)\int_0^t\eta^{\eps,\delta}(\sigma)d\sigma+\int_0^t C\eps^2\phi^2(\delta)
\leq C\phi(\delta)\int_0^t\eta^{\eps,\delta}(\sigma)d\sigma+C\eps^2\phi(\delta).
$$
The treatment of the term $\eta^{\eps,\delta}_3$ follows as in the proof of \cite[Lemma 3.6]{keraa} yielding, as $t\leq T^{\eps,\delta}\leq \phi(\delta)^{-1}$,
\begin{equation*}
\int_0^t\left|\frac{d}{dt}\eta_3^{\eps,\delta}(\sigma)\right|d\sigma
\leq C\int_0^t\eta^{\eps,\delta}(\sigma)d\sigma+C\eps^2
\leq C\phi(\delta)\int_0^t\eta^{\eps,\delta}(\sigma)d\sigma+C\eps^2\phi(\delta).
\end{equation*}
Hence, by recollecting the previous inequalities, by virtue of Gronwall lemma and $t\leq T^{\eps,\delta}\leq \phi(\delta)^{-1}$, it follows
$\eta^{\eps,\delta}(t)\leq C\eps^2\phi(\delta)e^{\phi(\delta)t}\leq C\eps^2\phi(\delta)$,
concluding the proof.
\end{proof}

\subsection{Proof of Theorem~\ref{mainth}}

By Lemma~\ref{gronw} and the continuity of $\eta^{\eps,\delta}$, it follows 
$T^{\eps,\delta}=\phi(\delta)^{-1}$, yielding
$\eta^{\eps,\delta}(t)\leq C\eps^2\phi(\delta),$
for every $t\in [0,\phi(\delta)^{-1})$ and all $(\eps,\delta)\in {\mathscr V}$ such that 
$0<\eps\leq\eps_0$ and $0<\delta\leq\delta_0$,
up to reducing the value of $\eps_0$ and $\delta_0$. Hence,
\begin{equation}
	\label{eq-quasi-fin}
\Big\|u^{\eps,\delta}(t)-e^{\frac{\im}{\eps}(\xi_\delta(t)\cdot x+\vartheta^{\eps,\delta}(t))}
R\left(\frac{\cdot-x_\delta(t)}{\eps}+\xi^{\eps,\delta}\right)
\Big\|^2_{H^1_\eps}\leq C\eps^2\phi(\delta),
\end{equation}
for all $t\in [0,\phi(\delta)^{-1})$ and all $(\eps,\delta)\in {\mathscr V}$ with 
$0<\eps\leq\eps_0$ and $0<\delta\leq\delta_0$. Recall now that, since $w^{\eps,\delta}=x_\delta(t)-\eps \xi^{\eps,\delta}$,
in light of \eqref{pointcontro}, we obtain $|\xi^{\eps,\delta}|^2\leq C\eps^2\phi(\delta)^2$. Then, we can conclude that
$\|R(\cdot)-R(\cdot-\xi^{\eps,\delta})\|_{H^1}^2\leq C|\xi^{\eps,\delta}|^2\leq C\eps^2\phi(\delta)^2$. This combined 
with \eqref{eq-quasi-fin} yields
\begin{equation}
	\label{eq-quasi-fin-2}
\Big\|u^{\eps,\delta}(t)-e^{\frac{\im}{\eps}(\xi_\delta(t)\cdot x+\vartheta^{\eps,\delta}(t))}
R\left(\frac{\cdot-x_\delta(t)}{\eps}\right)
\Big\|_{H^1_\eps}\leq C\eps\phi(\delta),
\end{equation}
for all $t\in [0,\phi(\delta)^{-1})$ and all $(\eps,\delta)\in {\mathscr V}$ with 
$0<\eps\leq\eps_0$ and $0<\delta\leq\delta_0$. Fixed $T>0$ and arguing as in \cite{keraa}, up to an
error of size $\eps\phi(\delta)$ in $H^1_\eps$ one can repeat the argument on the time interval
$[\phi(\delta)^{-1},2\phi(\delta)^{-1}]$ and so on. To cover the entire interval $[0,T]$ one therefore needs
to add $\phi(\delta)$-times an error of size $\eps\phi(\delta)$ in $H^1_\eps$, yielding
an overall error $\eps\phi^2(\delta)$ in $H^1_\eps$, reaching the control
\begin{equation}
	\label{eq-quasi-fin-3}
\Big\|u^{\eps,\delta}(t)-e^{\frac{\im}{\eps}(\xi_\delta(t)\cdot x+\vartheta^{\eps,\delta}(t))}
R\left(\frac{\cdot-x_\delta(t)}{\eps}\right)
\Big\|_{H^1_\eps}\leq C\eps\phi^2(\delta),
\end{equation}
for all $t\in [0,T]$ and all $(\eps,\delta)\in {\mathscr V}$ with 
$0<\eps\leq\eps_0$ and $0<\delta\leq\delta_0$. This concludes the proof.

\bigskip
\medskip

\end{document}